\documentclass[a4paper,11pt]{article}
\usepackage[latin1]{inputenc}
\usepackage[english]{babel}
\usepackage{amsmath}
\usepackage{amsfonts}
\usepackage{amssymb}
\usepackage{epsfig}
\usepackage{amsopn}
\usepackage{amsthm}
\usepackage{color}
\usepackage{graphicx}
\usepackage{enumerate}
\usepackage{fancyhdr}
\pagestyle{fancy}
\lhead{}
\rhead{Rotationally symmetric $p$-harmonic flows from $D^2$ to $S^2$}
\parindent=4pt
\parskip=2pt
\setlength{\oddsidemargin}{0.25in} \addtolength{\hoffset}{0cm}
\addtolength{\textwidth}{2.5cm} \addtolength{\voffset}{-1cm}
\addtolength{\textheight}{1cm}
\newtheorem{theorem}{Theorem}[section]
\newtheorem{corollary}[theorem]{Corollary}
\newtheorem{lemma}[theorem]{Lemma}
\newtheorem{proposition}[theorem]{Proposition}
\newtheorem{definition}[theorem]{Definition}

\newtheorem*{theorem*}{Theorem}
\newtheorem*{lemma*}{Lemma}
\newtheorem*{remark*}{Remark}
\newtheorem*{definition*}{Definition}
\newtheorem*{proposition*}{Proposition}
\newtheorem*{corollary*}{Corollary}
\numberwithin{equation}{section}
%

\newcommand{\real}{\mathbb{R}}




\def\a{\alpha}

\def\e{\varepsilon}        
\def\g{\gamma}
\def\h{\eta}




\def\qed{\,\unskip\kern 6pt \penalty 500
\raise -2pt\hbox{\vrule \vbox to8pt{\hrule width 6pt
\vfill\hrule}\vrule}\par}
\definecolor{darkblue}{rgb}{0.05, .05, .65}
\definecolor{darkgreen}{rgb}{0.1, .65, .1}
\definecolor{darkred}{rgb}{0.8,0,0}
\newcommand{\beqn}{\begin{equation}}
\newcommand{\eeqn}{\end{equation}}
\newcommand{\bear}{\begin{eqnarray}}
\newcommand{\eear}{\end{eqnarray}}
\newcommand{\bean}{\begin{eqnarray*}}
\newcommand{\eean}{\end{eqnarray*}}
%


\begin{document}

\title{\bf Rotationally symmetric $p$-harmonic flows from $D^2$ to $S^2$: local well-posedness and finite time blow-up}

\author{
\Large Razvan Gabriel Iagar\,\footnote{Departamento de Análisis
Matemático, Univ. de Valencia, Dr. Moliner 50, 46100, Burjassot
(Valencia), Spain, \textit{e-mail:}
razvan.iagar@uv.es},\footnote{Institute of Mathematics of the
Romanian Academy, P.O. Box 1-764, RO-014700, Bucharest, Romania.}
\\[4pt] \Large Salvador Moll,\footnote{Departamento de Análisis
Matemático, Univ. de Valencia, Dr. Moliner 50, 46100, Burjassot
(Valencia), Spain, \textit{e-mail:} j.salvador.moll@uv.es}\\ [4pt] }
\date{}
\maketitle

\begin{abstract}
We study the $p$-harmonic flow from the unit disk $D^2$ to the unit
sphere $S^2$ under rotational symmetry. We show that the Dirichlet
problem with constant boundary conditions is locally well-posed in
the class of classical solutions and we also give a sufficient
criterion, in terms of the boundary condition, for the derivative of
the solutions to blow-up in finite time.
\end{abstract}

\noindent {\bf AMS Subject Classification:} 35K51, 35K67, 35K92, 76A15, 82D40, 68U10

\medskip

\noindent {\bf Keywords:} $p$-harmonic flow, rotational symmetry,
$p$-Laplacian, local well-posedness, finite time blow-up, image
processing, liquid crystals, ferromagnetism.

\section{Introduction and main results}

Given a domain $\Omega\subset\real^N$, a real number $p>1$, a
smoothly embedded compact submanifold (without boundary) $M$ of
$\real^{N+1}$, and a mapping ${\bf u}:\Omega\mapsto M$, we consider the
following functional:
\begin{equation}\label{eq1}
E_{p}({\bf u}):=\frac{1}{p}\int_{\Omega}|\nabla {\bf u}|^p\,dx.
\end{equation}
For a generic point ${\bf v}\in M$, we denote by $\pi_{\bf v}$ the
orthogonal projection of $\real^{N+1}$ onto the tangent space
$T_{\bf v}M$ at point ${\bf v}$. With this notation, the
$p$-harmonic flow associated to $E_p$ is given by
\begin{equation}\label{eq2}
{\bf u}_t=-\pi_{{\bf u}}(-\Delta_{p}{\bf u}), \quad \Delta_{p}{\bf u}=\hbox{div}(|\nabla
{\bf u}|^{p-2}\nabla {\bf u}).
\end{equation}
If we particularize $M$ to be the unit sphere $S^N$ of
$\real^{N+1}$, \eqref{eq2} can be written as an explicit parabolic
system of partial differential equations:
\begin{equation}\label{eq3}
{\bf u}_t=\hbox{div}(|\nabla {\bf u}|^{p-2}\nabla {\bf u})+{\bf u}|\nabla {\bf u}|^p.
\end{equation}
Besides its mathematical interest on the results of the competition
between the diffusion term and the reaction in the form of gradient
terms, the system \eqref{eq3} has been also proposed in several
applications: ferromagnetism \cite{dSPG}, theory of liquid crystals
\cite{vdH}, multigrain problems \cite{KWC} and image processing
\cite{SapiroBook}. In this last context, the system \eqref{eq3} has
been used as a prototype for more complicated reaction-diffusion
systems modeling the evolution of director fields.

\medskip

From the mathematical point of view, we deal with the Dirichlet
problem for \eqref{eq3} with (constant) boundary condition ${\bf
u}(t,x)={\bf u}_0(x)$ for $(t,x)\in\partial Q$, where
$Q:=(0,\infty)\times\Omega$. This problem has been widely studied in
the last periods, starting with the more standard case $p=2$, see
for example \cite{BPP, BPvdH}, then for general $p>1$ \cite{chh, FR
Hbook, Misawa}. A special case in the analysis, with some particular
mathematical features due to lack of sufficient regularity, is the
limit case $p=1$, intensively studied recently; indeed, local
well-posedness, steady states and either the appearance of finite
time blow-up or the existence of global solutions were studied for
$p=1$ in  \cite{dPGM, GM, GK}; altogether, it has been shown that
for suitable boundary data, classical solutions exist in short time,
and under some conditions that their first derivative at the origin
blows-up in finite time. In all these papers, the case of
rotationally symmetric solutions and stationary boundary conditions
is considered. On the other hand, existence and some uniqueness
results have been recently obtained in \cite{GMM1, GMM2} for the
Neumann problem and $p=1$.

In a previous work \cite{IM1}, the authors studied  the rotationally symmetric steady states corresponding to the system  for
$1<p<\infty$, $M=S^2$, the unit sphere in $\real^3$, $\Omega=D^2$,
the unit disk in $\real^2$. A rotationally symmetric solution to \eqref{eq3} has the
general form
\begin{equation}\label{rotsim}
{\bf u}(t,x)=\left(\frac{x_1}{r}\sin\,h(t,r),\frac{x_2}{r}\sin\,h(t,r),\cos\,h(t,r)\right),
\quad r=|x|, \ x=(x_1,x_2)\in D^2,
\end{equation}
and the Dirichlet boundary condition $h(t,1)=h(0,1)=l$. The energy
functional $E_p$ in \eqref{eq1} becomes
\begin{equation}\label{pEnergyRotSim}
E_{p}({\bf u})=\int_0^1 L_p(r,h,h_r)\,dr,
\end{equation}
with a non-coercive Lagrangian given by
$$L_p(r,s,\xi)=\left(r^{2/p}h_r^2+r^{(2-2p)/p}\sin^2\,h\right)^{p/2}.$$
By replacing ${\bf u}$ in \eqref{eq3} by its special form \eqref{rotsim},
the system \eqref{eq3} becomes
\begin{equation}\label{pHarmRotSim}
\begin{split}
h_t&=\left(h_r^2+\frac{sin^2\,h}{r^2}\right)^{(p-4)/2}\left[(p-1)h_r^2h_{rr}+(p-3)\left(\frac{h_r^2\sin\,h\cos\,h}{r^2}-\frac{h_r\sin^2\,h}{r^3}\right)\right.\\
&+\left.\frac{h_r^3}{r}+\frac{h_{rr}\sin^2\,h}{r^2}-\frac{\sin^3\,h\cos\,h}{r^4}\right],
\end{split}
\end{equation}
see \cite{IM1} for details. In our previous work, we studied and
classified the steady states for \eqref{pHarmRotSim}, for all
$1<p<\infty$.

In the present paper we continue the study of the rotationally
symmetric flows begun in \cite{IM1}, and we devote it to some
qualitative properties of the solution ${\bf u}$ to \eqref{eq3}, or
equivalently, $h$ solution to \eqref{pHarmRotSim}, such as existence
and well-posedness for short times, regularity of solutions and
conditions for the phenomenon of blow-up in finite time to occur. We
restrict ourselves to the fast diffusion range $1<p<2$, since, for
the range $p\geq 2$, it has been shown in \cite{FR} that strong
solutions exist globally for any initial datum, they are H\"older
continuous and they converge to a steady-state as $t\to +\infty$. A
similar result for the range $1<p<2$ was also obtained in \cite{FR}
but with a condition of small energy on the initial datum. We point
out that our approach and results are totally different to those in
\cite{FR}. First of all, we obtain local existence of classical
solutions for any initial datum and secondly, we give a sufficient
condition on the boundary constraint for classical solutions ceasing
to exist in finite time.

\medskip
For convenience, we denote
$$
J(h):=h_r^2+\frac{\sin^2\,h}{r^2},
$$
and
$$
A(h):=(p-1)h_r^2h_{rr}+(p-3)\left(\frac{h_r^2\sin\,h\cos\,h}{r^2}-\frac{h_r\sin^2\,h}{r^3}\right)
+\frac{h_r^3}{r}+\frac{h_{rr}\sin^2\,h}{r^2}-\frac{\sin^3\,h\cos\,h}{r^4},
$$
thus \eqref{pHarmRotSim} together with the initial and boundary
conditions can be written in the form of the following Dirichlet
problem:
\begin{equation}\label{pHarmDirichlet}
\left\{\begin{array}{ll}h_t=F(h):=J(h)^{(p-4)/2}A(h), \quad
(t,r)\in(0,T)\times(0,1),\\h(0,r)=h_0(r), \quad r\in(0,1), \\
h(t,1)=h_0(1), \quad t\in[0,T).\end{array}\right.
\end{equation}
We introduce the concept of solution that we will use throughout the
paper.
\begin{definition}\label{DefSol}
A classical solution to the Dirichlet problem \eqref{pHarmDirichlet}
in $(0,T)$ with initial datum $h_0$ is a function $h\in
C^{1,2}([0,T]\times[0,1])$ which satisfies \eqref{pHarmDirichlet}
pointwisely in $[0,T]\times[0,1]$.
\end{definition}
Arguing similarly as in the proof of \cite[Lemma 2.1]{GK}, one can
easily get that ${\bf u}$ is a rotationally symmetric solution to
the Dirichlet problem associated to the system \eqref{eq3} if and
only if ${\bf u}$ has the form \eqref{rotsim} with $h$ being a
classical solution to the Dirichlet problem \eqref{pHarmDirichlet}
such that $h(t,0)=0$ for all $t\in[0,T)$. Our study is divided into
two main parts: a first one devoted to the local well-posedness of
the Dirichlet problem \eqref{pHarmDirichlet}, and a second, more
specialized one dealing with the phenomenon of blow-up in finite
time. We point out that the results in this paper and the strategy
to prove them are analogous to those of the case $p=1$ provided in
\cite{GM}. However, due to the special nonlinear term
$(p-1)h_r^2h_{rr}$ in \eqref{pHarmRotSim}, all the proofs are in the
present case much more involved than those in \cite{GM}.

\medskip

\noindent \textbf{Local well-posedness.}  We notice that, if
$h(t,r)=k\pi$ for some $r\in(0,1]$ and $k\in\mathbb{Z}$, then the
Dirichlet problem \eqref{pHarmDirichlet} becomes degenerate around
$(t,r)$. In order to avoid this extra difficulty, we assume that the
initial condition $h_0$ satisfies the following conditions
\begin{equation}\label{initcond}
h_0\in C^2([0,1]), \quad h_0(0)=0, \quad h_0(r)\in(0,\pi), \
\hbox{for} \ \hbox{all} \ r\in(0,1], \quad h_0'(0)>0.
\end{equation}
In this way, we avoid the degeneracy at $t=0$. However, these
properties are preserved along the evolution. More precisely, we
have the following uniqueness result.
\begin{theorem}\label{th.unique}
Let $p\in(1,2)$ and assume that $h_0$ satisfies \eqref{initcond}.
Then there exists at most one classical solution $h$ defined in a
maximal interval $(0,T)$ to the Dirichlet problem
\eqref{pHarmDirichlet} with initial condition $h_0$. Moreover, the
solution $h$ satisfies the following

\noindent (a) There exists $\delta=\delta(h_0)\in(0,1)$, such that
\begin{equation}\label{uniq1}
h(t,0)=0, \quad \delta r\leq h(t,r)\leq\pi-\delta r, \quad
h_r(t,0)\geq\delta,
\end{equation}
for all $r\in(0,1]$, $t\in (0,T)$.

\noindent (b) Furthermore,
\begin{equation}\label{uniq2}
h_{rr}(t,0)=0, \quad F(h(t,\cdot))|_{r=1}=0,
\end{equation}
for all $t\geq0$.
\end{theorem}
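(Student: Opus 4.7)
\emph{Strategy.} My plan rests on four observations. (i) The pointwise validity of the equation at $r=0$ forces $h(t,0)=0$: a direct expansion of $J(h)$ and $A(h)$ as $r\to 0$ shows that $F(h)=J(h)^{(p-4)/2}A(h)$ admits a finite limit at $r=0$ only when $\sin h(t,0)=0$, so continuity in $t$ together with $h_0(0)=0$ pins $h(t,0)=0$. (ii) The two-sided linear bound $\delta r\le h\le \pi-\delta r$ is produced by sub/supersolution comparison with the stationary profiles $\phi(r)=\delta r$ and $\pi-\phi(r)$. (iii) The lower bound yields $h_r(t,0)\ge\delta$ by letting $r\to 0$ in $h(t,r)/r$. (iv) Part (b) then reads off from a Taylor expansion of $F(h)$ at $r=0$ and from the time-constancy of $h$ at $r=1$. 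Uniqueness follows from a parabolic maximum principle applied to the difference of two solutions, the resulting linear operator being strictly parabolic thanks to (a).

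\emph{Barrier step.} Pick $\delta\in(0,1)$ small enough that $\delta r\le h_0(r)\le \pi-\delta r$ on $[0,1]$, which is available since $h_0\in C^2$, $h_0(0)=0$, $h_0'(0)>0$ and $0<h_0<\pi$ on $(0,1]$. For $\phi(r)=\delta r$ all the $1/r$-singular contributions in $A(\phi)$ cancel identically (as they must, for $F(\phi)$ to be well-defined), and a Taylor expansion in $x=\delta r$ carried out to fifth order yields
\begin{equation*}
A(\phi)(r)=\frac{6-p}{3}\,\delta^5 r+O(\delta^7),
\end{equation*}
strictly positive for $\delta$ small since $p<6$. Hence $F(\phi)>0=\phi_t$, i.e.\ $\phi$ is a strict subsolution. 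The comparison is argued by contradiction at a first interior contact point $(t_*,r_*)$ of $h-\phi$: there $h_r(t_*,r_*)=\delta$, $h_{rr}(t_*,r_*)\ge 0$ and $h_t(t_*,r_*)\le 0$, and the quasilinear term $(p-1)h_r^2h_{rr}$ combined with $h_{rr}\sin^2(\delta r_*)/r_*^2\approx\delta^2 h_{rr}$ contributes $p\delta^2 h_{rr}\ge 0$ to $A(h)(t_*,r_*)$, so $A(h)(t_*,r_*)\approx p\delta^2 h_{rr}(t_*,r_*)+(6-p)\delta^5 r_*/3>0$, which contradicts $h_t(t_*,r_*)\le 0$. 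The symmetric computation for $\pi-\phi$, using $\sin(\pi-x)=\sin x$ and $\cos(\pi-x)=-\cos x$, gives $A<0$ at the first contact point and hence the upper bound.

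\emph{Part (b) and uniqueness.} For (b), the identity $F(h(t,\cdot))|_{r=1}=h_t(t,1)=0$ is immediate from $h(t,1)=h_0(1)$. At $r=0$, plugging the Taylor ansatz $h(t,r)=ar+br^2+O(r^3)$ with $a=h_r(t,0)$, $b=h_{rr}(t,0)/2$ into $A$ and $J$ and cancelling the $1/r$-contributions yields
\begin{equation*}
J(h)|_{r=0}=2a^2,\qquad A(h)|_{r=0}=3p\,a^2 b,
\end{equation*}
so $h_t(t,0)=F(h)|_{r=0}=0$ combined with $a\ge\delta>0$ and $p>0$ forces $b=h_{rr}(t,0)/2=0$. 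For uniqueness, I would linearize $F$ along the convex combination $h_\theta=\theta h_1+(1-\theta)h_2$: $w=h_1-h_2$ solves a linear parabolic equation in which the coefficient of $w_{rr}$ is $\int_0^1 J(h_\theta)^{(p-4)/2}[(p-1)h_{\theta,r}^2+\sin^2 h_\theta/r^2]\,d\theta>0$ by (a), and $w$ vanishes on the parabolic boundary $\{t=0\}\cup\{r=0\}\cup\{r=1\}$; the parabolic maximum principle then gives $w\equiv 0$.

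\emph{Main obstacle.} I expect the barrier step to be the hard part: the $1/r$-singularities in $A$ only cancel at the order-five Taylor expansion of $\sin(\delta r)$ and $\cos(\delta r)$, and the sign of the residual coefficient at order $\delta^5 r$ is what drives the comparison; moreover, the quasilinear summand $(p-1)h_r^2 h_{rr}$, responsible for the $p\delta^2 h_{rr}$ contribution at contact points (and for the analogous structure appearing in the principal coefficient of the uniqueness argument), is precisely what the authors have in mind when they remark that the proofs are much more involved than in the $p=1$ case of \cite{GM}.
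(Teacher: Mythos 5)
Your proposal mirrors the paper's four-step structure exactly: pin $h(t,0)=0$ by a local expansion of $F$ at $r=0$, sandwich $h$ between explicit sub/supersolutions, read off part (b) from a Taylor expansion at $r=0$, and deduce uniqueness from a comparison principle for the linearization along $\theta h_1+(1-\theta)h_2$. The one genuine difference is your barrier: you use the linear profiles $\phi(r)=\delta r$ and $\pi-\delta r$, whereas the paper (Lemma~\ref{lem.subs}) uses $\Phi_\lambda(r)=2\arctan(\lambda r)$ and $\pi-\Phi_\lambda$, for which $A(\Phi_\lambda)=\frac{32(2-p)\lambda^5 r}{(1+\lambda^2 r^2)^2}$ is an exact, manifestly positive expression with no asymptotic expansion needed; the linear bounds $\delta r\le h\le \pi-\delta r$ are then deduced a posteriori via $\arctan x/x\ge\pi/4$ on $(0,1]$. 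Your expansion $A(\delta r)=\frac{6-p}{3}\delta^5 r+O(\delta^7)$ is correct, as is your part-(b) computation $J|_{r=0}=2a^2$, $A|_{r=0}=3p\,a^2 b$ (the paper only records the schematic form $p\,h_r^2 h_{rr}$, but the conclusion $h_{rr}(t,0)=0$ is the same). So your barrier is a legitimate, slightly more direct alternative to the paper's arctan one.

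The genuine gap is in how you invoke comparison, both for the barrier and for uniqueness. You assert that the linearized equation in $w$ is ``strictly parabolic thanks to (a)'' and that ``the parabolic maximum principle then gives $w\equiv 0$,'' but strict parabolicity does not by itself deliver a maximum principle here: the zeroth-order coefficient $C(t,r)=\int_0^1 F_x\,d\theta$ contains a priori $1/r^2$-singular terms near $r=0$, and an unbounded-above zeroth-order coefficient destroys the classical comparison argument. The entire content of the paper's Proposition~\ref{prop.comp}---in particular the ``Claim'' that $M_2(a)+M_3(a)<0$ for $a$ close to $1$---is precisely that, thanks to $h_r>0$ near the origin (which (a) supplies), the dominant singular pieces of $F_x$ combine with a favourable sign so that $C$ is in fact bounded from above. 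You identify the coefficient of $w_{rr}$ correctly and show it is positive, but that is the easy half. Likewise, your first-interior-contact-point argument for the barrier step has the same hidden difficulty, plus an extra one: since $h(t,0)=\phi(0)=0$, the functions always touch at $r=0$, so ruling out contact there requires an $\varepsilon$-shift of the barrier and then careful control of the $1/r$-singularities along the shifted (no longer $0$ at $r=0$) profile, which you do not supply. To make the proposal rigorous you would need to either prove a version of Proposition~\ref{prop.comp} or import it; as written, the step from barriers to the ordering $\phi\le h\le \pi-\phi$ and the step from the linearized equation to $w\equiv 0$ are both left open at the exact point where the real work lies.
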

The proof of Theorem \ref{th.unique} is given in Subsection
\ref{subsec.unique} and follows as a consequence of a more general
comparison principle that will be proved in Section \ref{sec2} and
of the availability of explicit sub- and supersolutions.

In order to prove existence of solutions and continuous dependence
on the initial data, we need to impose some further conditions on
the initial datum.
\begin{theorem}\label{th.exist}
Let $h_0$ be a function that satisfies \eqref{initcond} and,
\begin{equation}\label{initcond2}
h_{0rr}(0)=0, \quad F(h_0)|_{r=1}=0, \quad
r\mapsto\frac{h_0(r)}{r}\in C^2([0,1]).
\end{equation}
Then there exists a classical solution $h$ to \eqref{pHarmDirichlet}
with initial condition $h_0$, defined in a maximal interval $[0,T)$.
Moreover, the problem is well-posed (that is, the solution depends
continuously on $h_0$) and we have further regularity results:

(a) $$(t,r)\mapsto\frac{h(t,r)}{r}\in C^{1,2}([0,T)\times[0,1]),$$

(b) $$h_r\in C^{1,2}((0,T]\times(0,1]).$$
\end{theorem}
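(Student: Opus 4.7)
The plan is to reduce \eqref{pHarmDirichlet} to a uniformly parabolic quasilinear Dirichlet problem for the auxiliary unknown $w(t,r):=h(t,r)/r$, to which classical Hölder-space theory for quasilinear parabolic problems applies. Under \eqref{initcond2}, $w_0:=h_0/r$ lies in $C^2([0,1])$, and $w_0(0)=h_0'(0)>0$. Writing $h=rw$ gives $h_r=w+rw_r$, $h_{rr}=2w_r+rw_{rr}$; Taylor-expanding $\sin(rw)$ at $r=0$, each occurrence of $\sin h/r$, $\sin^2 h/r^2$, $\sin^3 h/r^3$, etc., becomes a smooth function of $(r,w)$. Substituting into \eqref{pHarmDirichlet} and collecting the apparent $r^{-4}$ singularities, one arrives at an equation of the form $w_t=\tilde F(r,w,w_r,w_{rr})$, strictly parabolic on a neighborhood of $w_0$ thanks to the lower bound $h_r(t,0)\ge\delta>0$ granted by Theorem \ref{th.unique}(a), which in particular keeps $J(h)$ bounded away from zero on $[0,T]\times[0,1]$.

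I would then solve this reformulated problem by a Banach fixed-point argument in a parabolic Hölder ball around $w_0$: freeze the coefficients at a $\bar w$ close to $w_0$, solve the resulting linear parabolic problem, and show that the map $\bar w\mapsto w$ is a contraction on a short time interval $[0,T_0]$. The non-trivial items to check are the uniform parabolicity of the linearized operator, which follows from the strict positivity of $J(h_0)$, and the compatibility conditions at the corners $(0,0)$ and $(0,1)$: a short computation shows that $h_{0rr}(0)=0$ is equivalent to $w_{0r}(0)=0$, i.e.\ to the Neumann-type condition naturally inherited by $w$ at $r=0$, while $F(h_0)|_{r=1}=0$ forces $h_t(0,1)=0$ in accordance with the time-independent Dirichlet condition $h(t,1)\equiv h_0(1)$. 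Once a $C^{1,2}$ solution $w$ is obtained on $[0,T_0]\times[0,1]$, setting $h:=rw$ yields a classical solution of \eqref{pHarmDirichlet} that automatically satisfies the regularity (a); the maximal interval of existence is then obtained by reopening the same argument at later times, using the uniform a priori bounds of Theorem \ref{th.unique} (together with the preservation of (b) therein) to control the parabolicity modulus and the compatibility conditions at each restart.

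Continuous dependence on $h_0$ can be read off in either of two equivalent ways: from the Lipschitz dependence of the fixed point on $w_0$ in the parabolic Hölder norm, or from the comparison principle of Section \ref{sec2} that underlies Theorem \ref{th.unique}. For the extra regularity in (b), I would observe that on any compact $K\Subset(0,T]\times(0,1]$ the equation \eqref{pHarmDirichlet} is uniformly parabolic and has smooth coefficients once it is known that $h\in C^{1,2}$ and $J(h)>0$; formally differentiating in $r$ produces a linear parabolic equation for $h_r$ with Hölder-continuous coefficients, and iterated interior parabolic Schauder estimates give $h_r\in C^{1,2}((0,T]\times(0,1])$.

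The hardest part will be the transformation step itself: the original PDE combines the singular prefactor $J(h)^{(p-4)/2}$ with terms as singular as $r^{-4}$, and one must verify carefully that the cancellations among the sines, cosines and powers of $r$ actually leave an equation for $w$ that is genuinely strictly parabolic with Hölder-continuous coefficients up to $r=0$. This is precisely where the new quasilinear term $(p-1)h_r^2 h_{rr}$ makes the analysis substantially heavier than in the $p=1$ case treated in \cite{GM}, as the excerpt itself warns.
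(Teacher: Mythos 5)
Your reduction step is sound in spirit — the paper also trades $h$ for a ratio-type unknown, although it chooses $h=2\arctan(ru)$ rather than $w=h/r$ in order to algebraize the trigonometric terms. The genuine gap is in the existence mechanism you then invoke. After either change of variables, the leading part of the equation near $r=0$ is of the form $w_t = a\,w_{rr}+\frac{b}{r}\,w_r+\dots$ with $a(0)\neq 0$ and $b(0)\neq 0$ (in the paper's normalization, $b/a\to 3$ as $r\to 0^+$). The coefficient of the first-order term therefore blows up like $1/r$, and this is not a uniformly parabolic problem in the sense required by the Ladyzhenskaya--Solonnikov--Ural'ceva or Friedman Schauder theory on $[0,T]\times[0,1]$. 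Your plan to ``freeze the coefficients at a $\bar w$ close to $w_0$, solve the resulting linear parabolic problem, and show the map is a contraction'' hinges on solvability and Schauder estimates for that frozen linear problem, and precisely this step fails with the $1/r$ coefficient: there is no off-the-shelf boundary regularity theory for such degenerate operators at $r=0$. This is exactly why the paper brings in Angenent's result that $L_0 v = v''+\frac{3}{r}v'$ is sectorial on $C([0,1])$ and then glues it, via a partition of unity, with a regularized operator $L_\e$ to conclude that the full linearization is sectorial; local existence then follows from Lunardi's abstract analytic-semigroup machinery, not from classical quasilinear Hölder theory.

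Two smaller points. First, invoking $h_r(t,0)\ge \delta$ from Theorem~\ref{th.unique}(a) to justify parabolicity inside the existence proof is at least potentially circular: that bound holds for solutions that are already known to exist, so in the construction one must work only with the initial datum's positivity ($h_0'(0)>0$) and preserve it for a short time by continuity; this is easy to fix but should be stated with care. Second, your treatment of part~(b) via interior Schauder away from $r=0$ is essentially what the paper does, but the paper has to do nontrivial work first (controlling $u_{rrr}$ near $r=0$ and using the $B_\alpha$-regularity of $u_t$ coming from Lunardi's theory) before it is in a position to differentiate the equation and apply the Hölder well-posedness result in \cite{LSU}. Without the sectorial/semigroup machinery, it is not clear how you would obtain the starting regularity needed to bootstrap.
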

The proof is quite complex and relies on the application of a
general theory of sectorial operators, that is developed in an
abstract framework in the book \cite{Lunardi}, after noticing that
the linearization of the (fully nonlinear) elliptic operator
associated to our equation is indeed sectorial. Precise statements
and proofs are given in Subsection \ref{subsec.lex}.

\medskip

\noindent \textbf{Finite time blow-up.} This is the more specialized
part of the paper, where we prove that, if $h_0(1)$ is sufficiently
large, then the solution to \eqref{pHarmDirichlet} is not global;
more precisely, its first derivative blows up in finite time. The
critical value for $h_0(1)$ is related to the non-existence of
stationary solutions. The stationary solutions to
\eqref{pHarmRotSim} for $p\in(1,2)$, that is, solutions to $A(h)=0$
in our notations, were completely classified by the authors in
\cite[Theorem 2]{IM1}, which we recall below in a shortened version
which is useful for our aims.
\begin{proposition}\label{prop.stat}
There exists a unique global stationary solution $h_*\in
C^{\infty}((0,\infty))\cap C^1([0,\infty))$ to Eq.
\eqref{pHarmRotSim} (that is, $A(h_*)=0$), which satisfies the
following properties:

\noindent (a) $h_*(0)=0$, $\lim\limits_{r\to\infty}h_*(r)=\pi/2$;

\noindent (b) There exists an increasing sequence of critical points
$r_n$ of $h$ such that $r_n\to\infty$ as $n\to\infty$,
$h_*(r_{2n})\in(\pi/2,\pi)$ and are local maxima,
$h_*(r_{2n+1})\in(0,\pi/2)$ and are local minima. Moreover, there
exists some positive integer $n_0$ sufficiently large such that
$|h_*(r_n)-\pi/2|$ is decreasing for $n\geq n_0$, and
\begin{equation*}
\left|h_*(r_0)-\frac{\pi}{2}\right|=\max\left\{\left|h_*(r_n)-\frac{\pi}{2}\right|:n\geq
0\right\};
\end{equation*}

\noindent (c) Any non-constant finite stationary global solution $h$
to Eq. \eqref{pHarmRotSim} has the form $h(r)=k\pi\pm h_*(\a r)$,
for some $\a>0$ and $k\in \mathbb Z$.
\end{proposition}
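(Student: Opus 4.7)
My approach is to treat $A(h)=0$ as a singular second-order ODE on $(0,\infty)$, classify its solutions by first exploiting the symmetries of the equation, and then carry out a phase-plane analysis. A direct computation shows that $A(h)=0$ is invariant under the continuous rescaling $h(r)\mapsto h(\alpha r)$ (in fact $A(h(\alpha\,\cdot))(r)=\alpha^{4} A(h)(\alpha r)$) and under the discrete symmetries $h\mapsto h+k\pi$ and $h\mapsto -h$, since every term of $A$ is built from $h_r,h_{rr}$ and the quantities $\sin h\cos h$, $\sin^2 h$, $\sin^3 h\cos h$, each of which transforms consistently under the two maps. Part (c) will therefore reduce to constructing one privileged solution $h_*$ with $h_*(0)=0$ and, after rescaling, $h_*'(0)=1$; the dilation together with the two discrete symmetries then generate the full family $k\pi\pm h_*(\alpha r)$.

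For local existence near the (singular) origin I would look for a formal odd power series $h_*(r)=r+c_3r^3+c_5r^5+\cdots$ (oddness being forced by $h_*(0)=0$ together with the symmetry $h\mapsto -h$), derive a recursion for the $c_{2k+1}$ from $A(h)=0$, and close the argument via a majorant / contraction-mapping estimate to obtain a $C^\infty$ solution on a small interval $[0,r_0]$, with $h_{*rr}(0)=0$ coming for free from oddness. To analyse the global structure I then pass to Emden--Fowler variables $y=h$, $z=rh_r$, $\tau=\log r$; solving $A(h)=0$ for $r^2 h_{rr}$ gives the autonomous planar system
\begin{equation*}
\frac{dy}{d\tau}=z,\qquad
\frac{dz}{d\tau}=z+\frac{\sin^3 y\cos y-z^3-(p-3)\sin y\cos y\,z^2+(p-3)\sin^2 y\,z}{(p-1)z^2+\sin^2 y},
\end{equation*}
whose equilibria are precisely the points $(k\pi/2,0)$, $k\in\zet$. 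Linearising at $(\pi/2,0)$ produces the matrix $\left(\begin{smallmatrix}0&1\\-1&p-2\end{smallmatrix}\right)$, with eigenvalues $\bigl((p-2)\pm\sqrt{(p-2)^{2}-4}\bigr)/2$; for $p\in(1,2)$ these are complex with strictly negative real part, so $(\pi/2,0)$ is an asymptotically stable focus.

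The main obstacle is to prove that the trajectory issuing from the origin (satisfying $y,z\to 0^+$ and $z/y\to 1$ as $\tau\to-\infty$) actually lies in the basin of $(\pi/2,0)$ and does not either escape or settle at a different equilibrium. I would first verify that the lines $\{y=k\pi\}$ are invariant (direct inspection of the vector field), so that the trajectory is trapped in the strip $\{0<y<\pi\}$. A Lyapunov functional adapted from the rescaled $p$-energy \eqref{pEnergyRotSim} should then control $|z|$ uniformly and rule out convergence to any interior equilibrium other than $(\pi/2,0)$; a Poincar\'e--Bendixson argument, together with the exclusion of periodic orbits (any such orbit would have to enclose $(\pi/2,0)$, incompatible with its spiral-sink character), forces $\omega$-convergence to $(\pi/2,0)$. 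Translating this spiral convergence back to $r=e^\tau$ yields the increasing sequence $r_n\to\infty$ of critical points of $h_*$, alternating local maxima and minima around $\pi/2$, with $|h_*(r_n)-\pi/2|\to 0$; the monotone decrease for $n\geq n_0$ follows from the geometric decay rate of the linearised spiral once the orbit enters a small neighbourhood of $(\pi/2,0)$, while the dominance of the very first oscillation $|h_*(r_0)-\pi/2|$ is obtained by a separate direct tracking of the first half-turn of the trajectory inside the strip. Claim (c) is then immediate from the reduction performed in paragraph~one.
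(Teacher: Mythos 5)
This proposition is not proved in the present paper; it is recalled from \cite[Theorem~2]{IM1}, so there is no in-paper argument to compare your attempt against. The phase-plane route you take ($y=h$, $z=rh_r$, $\tau=\log r$) is the standard one for rotationally symmetric map equations and is very likely the one in \cite{IM1}. Your scaling/symmetry observations and your formula for $dz/d\tau$ are correct (equivalently, using the notation \eqref{interm26}, $\dot z=(1-R)z+S\sin y\cos y$ with $1-R=(p-2)(z^2+\sin^2y)/[(p-1)z^2+\sin^2y]$), and the linearization at $(\pi/2,0)$ is indeed $\bigl(\begin{smallmatrix}0&1\\-1&p-2\end{smallmatrix}\bigr)$, a stable focus for $p\in(1,2)$.

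The global part of your sketch, however, has two genuine defects. First, the lines $\{y=k\pi\}$ are \emph{not} invariant: at $y=0$, $z\neq 0$ the vector field is $(\dot y,\dot z)=\bigl(z,\tfrac{p-2}{p-1}z\bigr)$, so $\dot y=z\neq 0$ and orbits cross $\{y=0\}$ transversally; trapping in the strip $\{0<y<\pi\}$ must therefore be argued (for instance by locating the orbit relative to the stable manifold of the degenerate equilibrium $(\pi,0)$), not read off from invariance of the boundary lines. Second, your reason for excluding periodic orbits --- that a closed orbit ``would have to enclose $(\pi/2,0)$, incompatible with its spiral-sink character'' --- is not valid: a closed orbit can perfectly well surround a stable focus (an unstable limit cycle from inside). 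A Lyapunov function or Dulac/Bendixson criterion is what would rule out cycles, but you only assert a Lyapunov function without constructing it; note that the natural candidate from the energy density, $V=(z^2+\sin^2y)^{p/2}$, has $\dot V=pV\,z[(p-2)z+2\sin y\cos y]/[(p-1)z^2+\sin^2y]$, which is sign-indefinite, so this step is nontrivial. Finally, the eventual monotonicity of $|h_*(r_n)-\pi/2|$ and the dominance of the first overshoot are stated but not established; the latter in particular requires a quantitative nonlinear tracking of the first half-turn, which your sketch postpones.
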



Let $H:=h_*(r_0)=\max\{h_*(r):r>0\}$. Then $H\in(\pi/2,\pi)$ and,
from the above proposition, for any $l\in(H,\pi)$, there exists no
stationary solution $\tilde{h}$ to \eqref{pHarmRotSim} satisfying
$\tilde{h}(0)=0$ and $\tilde{h}(1)=l$. Moreover, we have the
following obvious consequence of Proposition \ref{prop.stat}.
\begin{corollary}\label{cor}
For $l\in(H,\pi)$, the unique stationary solution $h$ to
\eqref{pHarmRotSim} such that $h(1)=l$ is given by
$H_l(r)=\pi-h_{*}(\a r)$, where $\a>0$ is determined by
$h_{*}(\a)=\pi-l$.
\end{corollary}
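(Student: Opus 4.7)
The plan is to apply the classification of stationary solutions in Proposition \ref{prop.stat}(c) to any $h$ satisfying $h(1)=l$, and then eliminate all admissible parameters $(k,\pm,\a)$ except the one in the statement by a short case analysis based on the constraint $l\in(H,\pi)$ and the range $h_*([0,\infty))\subset[0,H]$.

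By Proposition \ref{prop.stat}(c), any non-constant finite global stationary solution has the form $h(r)=k\pi\pm h_*(\a r)$ for some $k\in\mathbb{Z}$ and $\a>0$. Evaluating at $r=1$ gives $l=k\pi\pm h_*(\a)$. With the ``$+$'' sign I would need $h_*(\a)=l-k\pi\in[0,H]$: for $k=0$ this forces $h_*(\a)=l>H$, which is impossible since $H=\max h_*$; for every other integer $k$ the value $l-k\pi$ falls outside $[0,H]$, because $l\in(0,\pi)$ and $H\in(\pi/2,\pi)$. With the ``$-$'' sign I would need $h_*(\a)=k\pi-l\in[0,H]$, which only holds for $k=1$, yielding $h_*(\a)=\pi-l\in(0,\pi-H)\subset(0,H)$ (the inclusion uses $H>\pi/2$). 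All other integers $k$ push $k\pi-l$ either below $0$ (for $k\leq 0$) or above $\pi>H$ (for $k\geq 2$). Hence necessarily $h(r)=\pi-h_*(\a r)$ with $h_*(\a)=\pi-l$.

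To fix $\a$, I use Proposition \ref{prop.stat}(b): $r_0$ is the first critical point of $h_*$ and $h_*(r_0)=H$ is the global maximum, so $h_*$ is strictly increasing on $[0,r_0]$. As $\pi-l\in(0,H)$, the intermediate value theorem yields a unique $\a\in(0,r_0)$ with $h_*(\a)=\pi-l$, which is the parameter selected in the statement.

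The main point requiring care is the uniqueness clause of the corollary. Because $h_*$ oscillates around $\pi/2$, the equation $h_*(\a')=\pi-l$ admits further solutions $\a'>r_0$, each producing a distinct stationary profile that still equals $l$ at $r=1$. The ``unique'' $H_l$ in the statement is therefore to be understood as the profile built from the first, strictly increasing branch of $h_*$ on $[0,r_0]$, i.e.\ the monotone decreasing profile on $[0,1]$ with range in $[l,\pi]$; this is the distinguished one that will be used in the blow-up analysis later in the paper.
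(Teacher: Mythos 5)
Your case analysis via Proposition \ref{prop.stat}(c) is correct: evaluating $k\pi\pm h_*(\a r)$ at $r=1$ and using $h_*([0,\infty))\subseteq[0,H]$ forces $k=1$ with the minus sign, giving $h(r)=\pi-h_*(\a r)$ with $h_*(\a)=\pi-l$. For full rigor you should also dispose of the constant stationary solutions, which are the multiples of $\pi/2$ (the only constants annihilating $A$); none of them equals $l$, since $l\in(H,\pi)\subset(\pi/2,\pi)$.

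The final paragraph, however, contains a genuine error. You claim that $h_*(\a')=\pi-l$ admits further solutions $\a'>r_0$, so that the corollary's uniqueness must be ``understood'' as selecting the first increasing branch. In fact $\a$ is genuinely unique, and the constraint $l>H$ is exactly what makes it so. By Proposition \ref{prop.stat}(b), $|h_*(r_n)-\pi/2|\le h_*(r_0)-\pi/2=H-\pi/2$ for every $n$, hence $h_*(r_n)\ge\pi-H$ for all $n$; since $h_*$ is monotone between consecutive critical points and $h_*(r)\to\pi/2\in(\pi-H,H)$, it follows that $h_*(r)\ge\pi-H$ for every $r\ge r_0$. But $l>H$ gives $\pi-l<\pi-H$, so $h_*(r)>\pi-l$ for all $r\ge r_0$ and the equation $h_*(\a)=\pi-l$ has exactly one solution, which lies in $(0,r_0)$. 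Thus the uniqueness asserted in the corollary is an honest consequence of Proposition \ref{prop.stat}, not a convention; your hedge misreads the role of the hypothesis $l\in(H,\pi)$.
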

On the other hand, if $h$ is a global solution to the Dirichlet
problem \eqref{pHarmDirichlet}, then it is expected, as usual for
parabolic equations, that its large-time behavior as $t\to\infty$,
to be a stationary state to the equation. Thus, we deduce that if
the initial datum satisfies $h(0)=0$ and $h(1)=l\in(H,\pi)$, the
solution to \eqref{pHarmDirichlet} with initial condition $h_0$ is
not global. Indeed, we have
\begin{theorem}\label{th.blowup1}
Let $h_0$ be as in Theorem \ref{th.exist} and $T>0$ its maximum
existence time. If $h_0(1)\in (H,\pi)$, then $T<\infty$ and the
derivative of the solution $h$ blows up at the origin at $t=T$.
\end{theorem}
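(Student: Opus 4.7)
The plan is to argue by contradiction: assume the maximal existence time is $T = +\infty$ and derive a violation of Proposition \ref{prop.stat}. The strategy exploits the gradient-flow nature of \eqref{pHarmRotSim}: any globally existing classical solution should, along a time subsequence, approach a stationary state with the same Dirichlet data at $r = 0$ and $r = 1$, whereas by Corollary \ref{cor} no stationary solution satisfying simultaneously $h_\infty(0) = 0$ and $h_\infty(1) = l \in (H, \pi)$ exists.

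Concretely, I would first establish the dissipation identity
\[
\frac{d}{dt}\, E_p(h(t, \cdot)) = -p \int_0^1 h_t^2\, r \, dr,
\]
which reflects the underlying gradient-flow structure of \eqref{pHarmDirichlet}: up to a constant time rescaling, it is the $L^2((0,1), r\,dr)$-gradient flow of $E_p$. The derivation is a direct integration by parts in the time derivative of \eqref{pEnergyRotSim}, using the Dirichlet condition at $r = 1$ and the boundary behavior at $r = 0$ from Theorem \ref{th.unique}. This yields $E_p(h(t, \cdot)) \leq E_p(h_0)$ and $\int_0^\infty \int_0^1 h_t^2\, r\, dr\, dt < +\infty$, so by a mean-value selection there is a sequence $t_n \to +\infty$ with $\|h_t(t_n, \cdot)\|_{L^2(r\, dr)} \to 0$. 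The pointwise bounds \eqref{uniq1} and the regularity $(t, r) \mapsto h(t, r)/r \in C^{1,2}$ from Theorem \ref{th.exist} give enough compactness to extract from $\{h(t_n, \cdot)\}$ a subsequential limit $h_\infty$, which turns out to be a classical stationary solution with $h_\infty(0) = 0$ and $h_\infty(1) = l$. Corollary \ref{cor} forbids such a solution, so $T < +\infty$.

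To localize the blow-up at the origin, I would use that on every set $[0, T] \times [\varepsilon, 1]$ with $\varepsilon > 0$ the coordinate factors $r^{-k}$ are bounded, and the trapping $\delta r \leq h \leq \pi - \delta r$ keeps $\sin h$ uniformly away from zero, so the equation is uniformly parabolic there. Standard interior regularity for quasilinear parabolic equations, together with part (b) of Theorem \ref{th.exist}, yields uniform bounds on $h$ and its derivatives up to $t = T$ on $[0, T] \times [\varepsilon, 1]$. Hence the obstruction to continuing the classical solution past $T$ must concentrate at $r = 0$, and since $h(t, 0) = 0$ remains fixed and the lower bound $h_r(t, 0) \geq \delta$ prevents degeneracy from below, the only mode of breakdown compatible with Definition \ref{DefSol} is $h_r(t, 0) \to +\infty$ as $t \to T^-$.

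The hardest step, in my view, is the compactness argument producing the stationary limit $h_\infty$ as a classical solution \emph{up to} $r = 0$. The coordinate singularity of \eqref{pHarmRotSim} at the axis means that uniform estimates from parabolic regularity on $[\varepsilon, 1]$ do not automatically propagate to $r = 0$, and one must rely on the special structure $h/r \in C^{1,2}$ provided by Theorem \ref{th.exist} both to secure equicontinuity near the axis and to preserve, in the limit, the pointwise stationary equation $A(h_\infty) = 0$ together with the boundary condition $h_\infty(0) = 0$.
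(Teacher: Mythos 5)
There is a genuine gap, and it is exactly at the point you flag as ``the hardest step.'' Your plan is to extract a subsequential limit $h_\infty$ that is a classical stationary solution with $h_\infty(0)=0$ and $h_\infty(1)=l$, and then invoke Corollary \ref{cor} for a contradiction. But the contradiction is illusory: Corollary \ref{cor} says there \emph{is} a stationary solution with value $l$ at $r=1$, namely $H_l(r)=\pi-h_*(\alpha r)$, which has $H_l(0^+)=\pi$. The dissipation identity gives $\int_0^\infty\int_0^1 h_t^2\,r\,dr\,dt<\infty$ and Proposition \ref{prop.est} gives $|rh_r|\le K$; both are weighted by $r$ and degenerate at the axis, so the compactness they buy is only $L^\infty_{\mathrm{loc}}((0,1])$ (this is exactly Proposition \ref{prop.asympt}: convergence on $[\varepsilon,1]$, not on $[0,1]$). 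Along any time sequence the limit on $[\varepsilon,1]$ is forced to be $H_l$, and a boundary layer forms near $r=0$ where $h$ drops from values near $\pi$ down to $h(t,0)=0$. In particular $h_r$ near the axis cannot stay bounded as $t\to\infty$. Your hope that $(t,r)\mapsto h/r\in C^{1,2}$ from Theorem \ref{th.exist} secures equicontinuity at the axis is false \emph{uniformly in time}: if $h/r$ were equicontinuous up to $r=0$ uniformly in $t$, the limit would satisfy $h_\infty(0)=0$, which is incompatible with the forced convergence to $H_l$ on $(0,1]$. So there is no classical stationary limit on $[0,1]$, and the contradiction you need never materializes.

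The paper's proof exploits the convergence to $H_l$ rather than trying to contradict it. Since $H_l(0^+)=\pi>(p+4)\pi/6$, one may pick $\theta>0$ with $H_l(\theta)>(p+4)\pi/6$; Corollary \ref{cor2} then yields $t_0$ with $h(t,\theta)>(p+4)\pi/6$ for $t\ge t_0$. Rescaling $\tilde h(t,r):=h(t_0+\theta t,\theta r)$ produces a global solution on $[0,1]$ with Dirichlet value exceeding $(p+4)\pi/6$, and comparison with the explicit blowing-up subsolutions $f(t,r)=\frac{p+4}{3}\arctan(r/b(t))$ of Proposition \ref{prop.subs} (with $b_0$ large so $f(0,\cdot)\le\tilde h(0,\cdot)$) traps $\tilde h\ge f$ up to $t=b_0/\delta_0$, forcing $\tilde h_r(t,0)\ge f_r(t,0)\to\infty$ and contradicting classical regularity. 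Those explicit subsolutions, whose construction occupies the entire Appendix, are the essential ingredient that your argument omits, and without them the contradiction cannot be produced. The secondary part of your write-up --- that any finite-time breakdown must concentrate at $r=0$ because interior parabolicity and the trapping $\delta r\le h\le\pi-\delta r$ give a priori bounds on $[\varepsilon,1]$ --- is sound in spirit and consistent with what the paper obtains directly from the comparison with $f$.
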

The proof of Theorem \ref{th.blowup1} is very involved and technical
and requires several steps that are sketched below.

\noindent (A) We first show that indeed, if a global solution $h$ to
\eqref{pHarmDirichlet} exists, then its behavior as $t\to\infty$ is
given by a stationary state to \eqref{pHarmRotSim}. In order to
prove it, we need part (B) below.

\noindent (B) To prove (A), we first derive some weighted bounds in
$L^{\infty}$ of the derivative $h_r$, more precisely, uniform bounds
of $rh_r$, that in particular imply local boundedness for $h_r$
outside the origin. This is done in Section \ref{sec.bound}.

\noindent (C) We also need some special, explicit subsolutions that
blow up in finite time. Their form proves to be quite simple, but
proving that indeed they are subsolutions to \eqref{pHarmRotSim}
showed to be technically difficult. In fact, we have the following
result proved in the Appendix:
\begin{proposition}\label{prop.subs}
Let $p\in(1,2)$. There exists $\delta_0>0$, such that
$$
h(t,r):=\frac{p+4}{3}\arctan\left(\frac{r}{b(t)}\right), \quad
b(t)=b_0-t\delta_0,
$$
is a subsolution to Eq. \eqref{pHarmRotSim}, that is
\begin{equation}\label{spec.subs}
h_t\leq J(h)^{(p-4)/2}A(h), \quad {\rm for} \ {\rm any} \
(t,r)\in(0,b_0/\delta)\times(0,1),
\end{equation}
for any $b_0>0$ and $\delta\in(0,\delta_0)$.
\end{proposition}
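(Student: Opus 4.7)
The plan is to verify the inequality \eqref{spec.subs} by direct substitution of the explicit ansatz, exploiting the self-similar form of $h$ in the variable $s := r/b(t)$. Writing $\alpha := (p+4)/3$, I would first record the basic derivatives
$$h_r = \frac{\alpha b}{b^2+r^2}, \qquad h_{rr} = -\frac{2\alpha b r}{(b^2+r^2)^2}, \qquad h_t = \frac{\alpha\,\delta\, r}{b^2+r^2},$$
and introduce the auxiliary angle $\phi := \arctan s$, so that $\sin\phi = s/\sqrt{1+s^2}$, $\cos\phi = 1/\sqrt{1+s^2}$, and $h = \alpha\phi$, $\sin h = \sin(\alpha\phi)$, $\cos h = \cos(\alpha\phi)$. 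Under this substitution, a direct computation shows that every summand of $A(h)$ and each piece of $J(h)$ factors as a fixed power of $b$ times an explicit function of $s$: namely $J(h) = \tilde J(s)/b^2$ and $A(h) = \tilde A(s)/b^4$, so that
$$J(h)^{(p-4)/2}A(h) \;=\; \frac{\tilde J(s)^{(p-4)/2}\,\tilde A(s)}{b^p}, \qquad h_t \;=\; \frac{\alpha\,\delta\, s}{b\,(1+s^2)}.$$
The subsolution inequality therefore reduces to the single one-variable inequality
$$\frac{\alpha\, s}{1+s^2}\,\delta \;\leq\; b^{1-p}\,\tilde J(s)^{(p-4)/2}\,\tilde A(s), \qquad s\in(0,1/b),$$
with parameters $(b,\delta,p)$.

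The heart of the argument, and the main obstacle, is to show that $\tilde A(s)$ is strictly positive for $s\in(0,\infty)$ and vanishes like $s$ (and not faster) at the origin, so that after dividing by $s/(1+s^2)$ the right-hand side is bounded below by a strictly positive quantity. The delicate point is that the individual summands of $\tilde A$ carry $1/s$ singularities as $s\to 0^+$; these cancel in two pairs. The $(p-3)$-combination $h_r^2\sin h\cos h/r^2 - h_r\sin^2 h/r^3$ contributes leading terms $+(p-3)\alpha^3/s$ and $-(p-3)\alpha^3/s$ whose sum vanishes; similarly $h_r^3/r$ and $-\sin^3 h\cos h/r^4$ produce $+\alpha^3/s$ and $-\alpha^3/s$. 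Expanding $\sin(\alpha\phi)$ and $\cos(\alpha\phi)$ to higher order in $s$, one then computes the leading $s$-coefficient of $\tilde A$, which I expect (after careful algebra) to be a positive multiple of $(6-p)(p+4)^2 - 72\,p$, a quantity strictly positive throughout $p\in(1,2)$ and vanishing precisely at $p=2$. This is the step where the specific value $\alpha = (p+4)/3$ is essential: it is tuned so that this surviving leading coefficient has the right sign.

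Away from $s=0$, the positivity of $\tilde A$ is established by direct inspection of the trigonometric expressions: since $\alpha\phi$ ranges over a subinterval of $(0,\pi)$, the signs of $\sin(\alpha\phi)$ and $\cos(\alpha\phi)$ are controlled; and as $s\to\infty$ (equivalently $\phi\to\pi/2$), every term of $\tilde A$ decays to zero at an explicit polynomial rate that can be matched against $\tilde J^{(p-4)/2}$ (which blows up as $(p-4)/2<0$). A combination of asymptotic analysis at the two endpoints $s=0$ and $s=\infty$, together with an explicit algebraic check on a compact interval, yields a uniform lower bound of the form $\tilde J(s)^{(p-4)/2}\tilde A(s) \geq c(p)\,s/(1+s^2)$ with $c(p)>0$.

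Once this uniform bound is in place, one closes the argument by choosing $\delta_0$ small enough (in terms of $c(p)$ and the allowed range of $b$) so that $\alpha\,\delta_0 \leq c(p)\,b^{1-p}$ for every admissible $b$, which yields \eqref{spec.subs}. The essential difficulty, and the reason the proof is relegated to the appendix, is the careful trigonometric computation establishing the positivity and vanishing rate of $\tilde A$; everything else is tedious but mechanical bookkeeping of the $b$-dependence.
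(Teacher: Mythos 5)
Your overall reduction --- substituting the self-similar ansatz and scaling out $b$ to obtain a one-variable inequality in $s=r/b$ --- agrees with the paper's opening move (the paper uses the reciprocal variable $s=b/r$, but this is cosmetic). However, the argument breaks down at two places. The claimed uniform bound $\tilde J(s)^{(p-4)/2}\tilde A(s)\geq c(p)\,s/(1+s^2)$ is false as $s\to\infty$: there $\alpha\phi\to(p+4)\pi/6\in(5\pi/6,\pi)$, so $\sin(\alpha\phi)$ tends to a positive constant, giving $\tilde J(s)\sim\mathrm{const}\cdot s^{-2}$ and hence $\tilde J^{(p-4)/2}\sim\mathrm{const}\cdot s^{4-p}$, while the dominant term $-\sin^3h\cos h/r^4$ of $A(h)$ yields $\tilde A(s)\sim\mathrm{const}\cdot s^{-4}$ (positive, since $\cos(\alpha\pi/2)<0$). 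So $\tilde J^{(p-4)/2}\tilde A\sim\mathrm{const}\cdot s^{-p}$, whereas $s/(1+s^2)\sim s^{-1}$; since $p>1$, the left side decays strictly faster and your bound cannot hold for large $s$. The correct comparison is $\tilde J^{(p-4)/2}\tilde A\geq c(p)\,s/(1+s^2)^{(p+1)/2}$, which after unwinding the scaling reduces the smallness requirement on $\delta$ to $\alpha\delta\leq c(p)(b^2+r^2)^{(1-p)/2}$ --- here the restrictions $r\leq1$, $b\leq b_0$ are essential, and your closing step needs to be rewritten accordingly.

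The deeper gap is the positivity of $\tilde A$, which you dispatch as ``direct inspection of the trigonometric expressions'' plus ``an explicit algebraic check on a compact interval.'' That is far from the truth. The paper rewrites the relevant quantity as $A^*(s,p)=-\frac{1}{54(1+s^2)^{3/2}}(I_1+I_2+I_3+I_4)$ and then devotes four separate technical lemmas (Lemmas~\ref{lem.T1T3}, \ref{lem.T2T3}, \ref{lem.T1T4}, \ref{lem.T2T4}) to proving $I_1+I_2+I_3+I_4<0$ on $(0,\infty)$, each handling a pair $I_i+I_j$ on a complementary range of $s$. The key mechanism --- differentiating the relevant expression repeatedly with respect to $p$, establishing monotonicity, and exploiting the fact that each combination vanishes identically at $p=2$, together with the auxiliary inequality $\sin(k\arctan(1/x))<kx/(1+x^2)$ for $k\in(2,4)$ of Lemma~\ref{lem.interm} --- is entirely absent from your sketch, and nothing in the proposal indicates how one would discover or replace it. (Also, the constant you get in the limit as $s\to\infty$ in your variables is the paper's $C_2(p)=-(p+4)(p^5+10p^4+72p^3+256p^2-128p-1536)/729$; this vanishes at $p=2$ as you anticipate, but it is not a constant multiple of $(6-p)(p+4)^2-72p$, as comparing values at $p=1$ and $p=3/2$ shows.)
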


With all these steps, the proof of Theorem \ref{th.blowup1} follows
easily from a contradiction argument. The proof of claim (A) above
together with this final contradiction argument may be found in
Section \ref{sec.blowup}.

Finally, we can complete the panorama of blow up in finite time by
showing that it may also occur, although not as a general
phenomenon, even when $h(1)<H$. More precisely
\begin{theorem}\label{th.blowup2}
For any $l\in(0,\pi)$, there exist $h_0$, $h$ and $T$ as in Theorem
\ref{th.exist} such that $h_0(1)=l$ and $T<\infty$.
\end{theorem}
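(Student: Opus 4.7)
The plan is to exhibit, for each $l \in (0,\pi)$, an initial datum $h_0$ satisfying the hypotheses of Theorem~\ref{th.exist} with $h_0(1) = l$ whose corresponding solution blows up in finite time. The key tool is the blowing-up subsolution of Proposition~\ref{prop.subs}, namely $\underline{h}(t,r) = \tfrac{p+4}{3}\arctan(r/b(t))$ with $b(t) = b_0 - t\delta$, whose derivative at the origin, $\underline{h}_r(t,0) = (p+4)/(3b(t))$, diverges as $t \to b_0/\delta$. Combined with the comparison principle from Section~\ref{sec2}, whenever one can arrange $h \geq \underline{h}$ on the parabolic cylinder, the solution $h$ inherits blow-up of its derivative at $r=0$, since $h(t,0) = 0 = \underline{h}(t,0)$ forces $h_r(t,0) \geq \underline{h}_r(t,0) \to \infty$.

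When $l \geq (p+4)\pi/6$ the argument is essentially direct: choose $b_0$ large and $\delta \in (0,\delta_0)$, and pick any $h_0$ satisfying the compatibility conditions of Theorem~\ref{th.exist} with $h_0(1) = l$ and $h_0 \geq \underline{h}(0,\cdot)$ on $[0,1]$. Since $\underline{h}(t,1) < (p+4)\pi/6 \leq l$ throughout the subsolution's lifetime, the parabolic boundary inequality needed for comparison holds at $r=1$, and the conclusion follows from the preceding paragraph.

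For general $l \in (0,\pi)$, and in particular when $l < (p+4)\pi/6$, the naive boundary comparison breaks because $\underline{h}(t,1) \to (p+4)\pi/6 > l$. I would handle this by working on a sub-interval $[0,r_*] \subset [0,1]$ and exploiting the scaling symmetry $\tilde{h}(t,r) = h(\lambda^p t, \lambda r)$ of equation~\eqref{pHarmRotSim} (a straightforward computation shows $J$ scales as $\lambda^2$ and $A$ as $\lambda^4$, giving the exponent $\lambda^p$ for $F$). Fix $l' \in (\max\{H,(p+4)\pi/6\}, \pi)$ and $r_* \in (0,1)$, and construct $h_0 \in C^2([0,1])$ (verifying the compatibility conditions of Theorem~\ref{th.exist}) with a bump profile satisfying $h_0(1) = l$, $h_0(r_*) \geq l' + \eta$ for some margin $\eta > 0$, and $h_0|_{[0,r_*]}$ dominating a spatially rescaled version of $\underline{h}(0,\cdot)$. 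The auxiliary Dirichlet problem on $[0,r_*]$ with constant boundary value $l'$ and initial condition $h_0|_{[0,r_*]}$ is equivalent under the scaling to a problem on $[0,1]$ with boundary value $l'$, and therefore blows up at some finite time $T^*$ by the preceding paragraph. If one can guarantee $h(t,r_*) \geq l'$ for all $t \in [0,T^*]$, then the comparison principle applied on $[0,r_*]$ delivers $h \geq h^*$ there and forces blow-up of $h$ no later than $T^*$.

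The main obstacle is verifying the lower bound $h(t,r_*) \geq l'$ throughout $[0,T^*]$. The $C^{1,2}$ regularity supplied by Theorem~\ref{th.exist} together with the strict margin $h_0(r_*) > l' + \eta$ guarantees this inequality on an initial sub-interval by continuity, but extending that window through the scaled blow-up time $T^*$ requires a careful quantitative balance. Choosing $r_*$ small shrinks $T^*$ via the scaling factor $r_*^p$, while simultaneously the weighted $L^\infty$ bounds on $h_r$ from Section~\ref{sec.bound} control the rate at which $h(\cdot,r_*)$ can decrease. Threading the parameters $r_*$, $\eta$, and the sharpness of the bump in $h_0$ so that the continuity window dominates the scaled blow-up time is the technical heart of the argument.
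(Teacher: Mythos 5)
Your approach is genuinely different from the paper's, and it contains a gap that you flag yourself but do not close. The paper does not use the scaling $\tilde h(t,r)=h(\lambda^p t,\lambda r)$ at all in this proof. Instead it introduces an \emph{auxiliary solution} $g$ with initial datum $g_0 \le h_0$, $g_0(1)=l$, and $g_0(1/2)>(p+4)\pi/6$. Since $g$ is fixed before anything else, continuity of $g$ alone produces a window $[0,t_0']$ on which $g(t,1/2)>(p+4)\pi/6$. Only \emph{then} is $b_0$ chosen so that the subsolution's blow-up time $b_0/\delta_0$ is smaller than $t_0'$, and only after that is $h_0$ chosen to dominate $\max\{f(0,\cdot),g_0\}$ on $[0,1/2]$. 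The ordering $h\ge g$ (same boundary value, ordered data) then delivers $h(t,1/2)\ge g(t,1/2)>(p+4)\pi/6>f(t,1/2)$ for free throughout the subsolution's lifetime, which is exactly the boundary control your argument needs at $r=r_*$, and comparison of $h$ with $f$ on $[0,1/2]$ forces $T<b_0/\delta_0$. No rate estimate on $h(\cdot,r_*)$ and no scaling are required.

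The obstacle you identify in your last paragraph is real and cannot be resolved with the tools you cite. Section~\ref{sec.bound} gives only weighted $L^\infty$ bounds on $h_r$, not on $h_t$ or $h_{rr}$, so there is no a priori control on how fast $h(\cdot,r_*)$ can fall; shrinking $r_*$ to reduce $T^*=r_*^pT_0$ simultaneously worsens the $1/r_*$ factor in those very bounds, so the proposed balancing does not obviously close. More structurally, your construction is circular: the subsolution's parameters (hence $T^*$) must be chosen to fit inside a window of validity, but that window is determined by the solution $h$, whose initial datum must in turn dominate the subsolution you are trying to choose. Introducing the \emph{independent} comparison solution $g$, as the paper does, is precisely what severs this loop — the window is pinned down by $g$ before $f$ or $h_0$ enter the picture — and that is the key idea missing from your proposal.
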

The proof of this theorem is similar to the one of \cite[Proposition
2.2]{BPvdH} or \cite[Theorem 4]{GM} and it will be sketched at the
end of Section \ref{sec.blowup}.

\section{Comparison principle}\label{sec2}

In this section we prove a comparison principle for
\eqref{pHarmRotSim} that will be often useful in the sequel. It is
analogous, but technically more involved, to Lemma 4.2 in \cite{GK},
where the comparison principle is proved for $p=1$. To this end, we
establish the notions of sub- and supersolution to
\eqref{pHarmRotSim}
\begin{definition}\label{def.subsuper}
A function $h\in C^{1,2}([0,T]\times[0,1])$ is called a subsolution
(resp. a supersolution) to Eq. \eqref{pHarmRotSim} if
$$
h_t\leq J(h)^{(p-4)/2}A(h), \quad ({\rm resp.} \quad h_t\geq
J(h)^{(p-4)/2}A(h)), \ {\rm for} \ {\rm all} \
(t,r)\in(0,T)\times(0,1).
$$
\end{definition}
We then prove
\begin{proposition}\label{prop.comp}
Assume that $h$, $f\in C^{1,2}([0,T]\times[0,1])$ are two functions
that satisfy $h(t,0)=f(t,0)=0$ for any $t\in[0,T]$ and that
$$
inf\{J(\theta h+(1-\theta)f):
(t,r,\theta)\in(0,T)\times(0,1)\times(0,1)\}>0.
$$
Assume that $h$ is a supersolution to \eqref{pHarmRotSim} and $f$ is
a subsolution to \eqref{pHarmRotSim} and that
$$
h(t,1)\geq f(t,1), \quad h(0,r)\geq f(0,r),
$$
for all $t\in[0,T]$, respectively $r\in[0,1]$. Then $h\geq f$ in
$[0,T]\times[0,1]$.
\end{proposition}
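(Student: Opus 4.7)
The strategy is to convert the comparison into a linear parabolic maximum principle for $w := h - f$, which satisfies $w(t,0) = 0$, $w(t,1) \geq 0$ and $w(0,r) \geq 0$, so it suffices to rule out negative values in the parabolic interior. I would linearize along the convex combinations $g_\theta := \theta h + (1 - \theta) f$, $\theta \in [0,1]$. The fundamental theorem of calculus gives
\begin{equation*}
F(h) - F(f) \;=\; \int_0^1 \frac{d}{d\theta} F(g_\theta)\, d\theta \;=\; a(t,r)\, w_{rr} + b(t,r)\, w_r + c(t,r)\, w,
\end{equation*}
where $a, b, c$ are the $\theta$-averages along $g_\theta$ of the partial derivatives $F_{h_{rr}}$, $F_{h_r}$, $F_h$ (treating $r, h, h_r, h_{rr}$ as independent variables of $F$). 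The hypothesis $\inf_{t,r,\theta} J(g_\theta) \geq J_0 > 0$ is exactly what guarantees that $F$ is smooth in $\theta$ along the segment, so that this linearization is legitimate.

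A direct computation yields $F_{h_{rr}} = J^{(p-4)/2}\bigl[(p-1) h_r^2 + \sin^2 h / r^2\bigr]$. Combined with the elementary inequality $(p-1) h_r^2 + \sin^2 h / r^2 \geq (p-1) J$, valid for $p \leq 2$, this gives $a(t,r) \geq (p-1) J_0^{(p-2)/2} =: \alpha_0 > 0$, so the inequality $w_t \geq a w_{rr} + b w_r + c w$ is uniformly parabolic. Setting $v := e^{-Kt} w$ with $K > \sup c$ transforms this into
\begin{equation*}
v_t \;\geq\; a\, v_{rr} + b\, v_r + (c - K)\, v, \qquad c - K < 0.
\end{equation*}
A negative interior minimum of $v$ at some $(t_0, r_0) \in (0, T] \times (0, 1)$ would force $v_t(t_0, r_0) \leq 0$, $v_r(t_0, r_0) = 0$, $v_{rr}(t_0, r_0) \geq 0$, yet the right-hand side at that point equals $a\, v_{rr} + (c - K)\, v > 0$, a contradiction. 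Since $v \geq 0$ on the parabolic boundary (with $v \equiv 0$ at $r = 0$), this forces $v \geq 0$ and hence $h \geq f$.

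The main obstacle is to justify that $c$ is bounded from above on $(0, T] \times (0, 1]$, so that a finite $K$ can be chosen. Individual terms in $A_h$ carry singular factors $1/r^2$, $1/r^3$, $1/r^4$, and the naive estimate is hopeless. One has to Taylor-expand $\sin$ and $\cos$ around $g_\theta(t,0) = 0$, using $g_\theta \in C^{1,2}$, and track the cancellations between the four summands contributing to the $r^{-2}$ part of $A_h$; after algebra this coefficient collapses to $-p\, (g_\theta)_r(t, 0)^2$, which is strictly negative because the hypothesis $\inf J(g_\theta) > 0$ at $r = 0$ reads $2 (g_\theta)_r(t, 0)^2 \geq J_0$. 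Consequently $c(t, r) \to -\infty$ as $r \to 0^+$ and $\sup c < +\infty$. Identifying the sign of the leading singularity is the technically delicate step; it parallels the $p = 1$ computation in Lemma~4.2 of \cite{GK}, but is more involved here because of the extra fully nonlinear contribution $(p-1) h_r^2 h_{rr}$ to $A$ and its ramifications in $A_h$.
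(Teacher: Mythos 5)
Your proposal follows the same overall architecture as the paper: linearize along the convex combinations $g_\theta = \theta h + (1-\theta)f$ via the mean-value theorem to obtain $w_t \geq a\,w_{rr} + b\,w_r + c\,w$ for $w = h - f$, verify $a \geq 0$, show $c$ is bounded above, and close with the classical parabolic maximum principle. The paper's writeup of the key step (boundedness of $c$) looks different on the surface --- it introduces neighborhoods $W_a(t_0,\theta_0)$ with a parameter $a$ close to $1$, bounds the three pieces $P,Q,R$ of $F_x = J^{(p-6)/2}(P+Q+R)$, and shows $M_2(a)+M_3(a) \leq M^4 G(a,p)$ with $G(a,p)\to -2p$ as $a\to 1$ --- but that is precisely the same near-origin analysis you carry out by direct Taylor expansion. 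Your leading coefficient $-p\,(g_\theta)_r(t,0)^2$ for the $r^{-2}$ part of $A_h$ is correct and is a cleaner way to record what the paper's $G(a,p)\to -2p$ expresses; it combines with $J^{(p-4)/2}>0$ to give $F_x \to -\infty$, matching the paper's conclusion. Two small points worth flagging: (i) in your bound $a \geq (p-1)J_0^{(p-2)/2}$ the exponent $(p-2)/2$ is negative, so a \emph{lower} bound on $J$ yields an \emph{upper} bound on $J^{(p-2)/2}$; the correct statement requires an upper bound $J\leq J_1$ (available by compactness), but this does not matter since only $a\geq 0$ is used. (ii) Your singularity count addresses only $A_h$; the full expression is $F_x = J^{(p-4)/2}A_x + \tfrac{p-4}{2}J^{(p-6)/2}J_x A$, and one must also note that $J_x A = O(1/r)$ because the $O(1/r)$ terms inside $A$ cancel --- this is subleading, so the conclusion stands, but it belongs in a complete argument.
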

\begin{proof}
Set
\begin{equation*}
\begin{split}
F(r,x,y,z)&=\left(y^2+\frac{\sin^2x}{r^2}\right)^{(p-4)/2}\left\{(p-1)y^2z+(p-3)\left[\frac{y^2\sin
x\cos
x}{r^2}-\frac{y\sin^2x}{r^3}\right]\right.\\&\left.+\frac{y^3}{r}+\frac{z\sin^2x}{r^2}-\frac{\sin^3x\cos
x}{r^4}\right\}
\end{split}
\end{equation*}
and notice that
$$
h_t\geq F(r,h,h_r,h_{rr}), \quad f_t\leq F(r,f,f_r,f_{rr}).
$$
Let $w:=h-f$. Denoting
$$
v(t,r,\theta):=\theta h(t,r)+(1-\theta)f(t,r), \quad
(t,r,\theta)\in(0,T)\times(0,1)\times(0,1),
$$
it follows by the mean-value theorem that $w$ satisfies the
following inequality
\begin{equation}\label{interm1}
w_t-A(t,r)w_{rr}-B(t,r)w_r-C(t,r)w\geq0, \quad
(t,r)\in(0,T)\times(0,1),
\end{equation}
where
$$
A(t,r):=\int_0^1F_{z}\,d\theta, \quad
B(t,r):=\int_0^1F_{y}\,d\theta, \quad
C(t,r):=\int_0^1F_{x}\,d\theta,
$$
in all of these, the integral being taken for $F(r,v,v_r,v_{rr})$.
Moreover, $w(t,0)=0$ and $w(t,1)\geq0$ for all $t\in[0,T)$ and
$w(0,r)\geq0$ for all $r\in[0,1]$. Thus, we have to prove that
\eqref{interm1} satisfies the standard comparison principle for
linear parabolic equations. To this end, we show that $A(t,r)\geq0$
and $C(t,r)$ is bounded from above, for all
$(t,r)\in(0,T)\times(0,1)$. We have
$$
F_z(r,x,y,z)=\left(y^2+\frac{\sin^2x}{r^2}\right)^{(p-4)/2}\left((p-1)y^2+\frac{\sin^2x}{r^2}\right)\geq0,
$$
thus $A(t,r)\geq0$ for all $(t,r)\in(0,T)\times(0,1)$. On the other
hand, after some calculations we find
\begin{equation}\label{interm2}
F_{x}(r,x,y,z)=\left(y^2+\frac{\sin^2x}{r^2}\right)^{(p-6)/2}(P+Q+R),
\end{equation}
where
$$
P(r,x,y,z):=\frac{\sin2x}{r^2}\left[\frac{(p-2)(p-3)}{2}y^2+\frac{p-2}{2}\frac{\sin^2x}{r^2}\right]z,
$$
$$
Q(r,x,y):=\frac{(p-4)\sin2x}{2r^2}\left[(p-3)\left(\frac{y^2\sin
x\cos
x}{r^2}-\frac{y\sin^2x}{r^3}\right)+\frac{y^3}{r}-\frac{\sin^3x\cos
x}{r^4}\right],
$$
and
$$
R(r,x,y):=\left(y^2+\frac{\sin^2x}{r^2}\right)\left[(p-3)\left(\frac{y^2\cos2x}{r^2}-\frac{y\sin2x}{r^3}\right)-\frac{3\sin^2x\cos^2x-\sin^4x}{r^4}\right].
$$
Since $J(v)>0$ up to $r>0$ by hypothesis and $h_r>0$ near $r=0$, we
have
$$
v_r(t_0,r,\theta_0)>0, \quad \cos
v(t_0,0,\theta_0)=\cos2v(t_0,0,\theta_0)=1
$$
for any $(t_0,\theta_0)\in[0,T]\times[0,1]$ and $r$ sufficiently
close to 0. Thus, fixing $a\in(0,1)$ and any
$(t_0,\theta_0)\in[0,T]\times[0,1]$, there exists some $r_a>0$
sufficiently small, that can be chosen independent of
$(t_0,\theta_0)$, such that
\begin{equation}\label{interm4}
y=v_r(t,r,\theta)>0, \quad a<\cos v(t,r,\theta)\leq1, \quad a<\cos
2v(t,r,\theta)\leq1,
\end{equation}
for all $(t,r,\theta)\in W_{a}(t_0,\theta_0)$, where
$W_a(t_0,\theta_0)$ is the following neighborhood of
$(t_0,\theta_0)$:
$$
W_a(t_0,\theta_0):=\{(t,r,\theta)\in[0,T]\times(0,1)\times[0,1]:
r\in(0,r_a), \ |t-t_0|<r_a, \ |\theta-\theta_0|<r_a\}.
$$
Similarly as in \cite{GK}, we denote $m:=\min\limits_{W_a}v_r>0$,
$M:=\max\limits_{W_a}v_r$ and choose $r_a$ small enough such that
$m\geq aM$. Then, by the mean-value theorem, we have
\begin{equation}\label{interm3}
ma\leq\frac{\sin x}{r}\leq M, \quad 2ma^2\leq\frac{\sin2x}{r}\leq2M
\quad {\rm in} \ W_a.
\end{equation}
We can thus estimate the three terms $P$, $Q$, $R$ in $W_a$. First
of all, we get the following estimate for $P$:
\begin{equation}\label{estP}
P(r,x,y,z)\leq\frac{1}{r}M_1(a), \quad M_1(a)>0.
\end{equation}
Indeed, by simply bounding from above all the terms and taking
modulus, using \eqref{interm3} we can take
$$
M_1(a)=2M\left(\frac{(p-2)(p-3)}{2}M^2+\frac{2-p}{2}M^2\right)\max\limits_{W_a}|v_{rr}(t,r,\theta)|>0
$$
and derive \eqref{estP}. In order to estimate $Q$ and $R$, we state
and prove the following

\medskip

\noindent \textbf{Claim:} There exists $M_2(a)$, $M_3(a)$ such that
\begin{equation}\label{estQR}
Q(r,x,y)\leq\frac{1}{r^2}M_2(a), \quad
R(r,x,y)\leq\frac{1}{r^2}M_3(a)
\end{equation}
in $W_a$, with $M_2(a)+M_3(a)<0$ for $a\in(0,1)$ sufficiently close
to 1.

\medskip

\noindent \textbf{End of the proof.} Assume for a moment that the
claim is true. Then, from \eqref{interm2}, \eqref{estP} and
\eqref{estQR} we deduce that $F_{x}(r,x,y,z)<0$ in $W_a$ provided
$r\in[0,-(M_2(a)+M_3(a))/M_1(a))$. Since the only singularity is at
$r=0$, by continuity we get that $F_x$ is bounded from above for any
$r\in(0,1)$ and $(t,\theta)$ such that $|t-t_0|<r_a$ and
$|\theta-\theta_0|<r_a$. Recalling that $r_a$ was taken independent
of the starting point $(t_0,\theta_0)$, we conclude that $F_x$,
whence also $C(t,r)$, is bounded from above in $(0,T)\times(0,1)$.
Then the maximum principle applies to \eqref{interm1} and we reach
the conclusion.

\medskip

\noindent \textbf{Proof of the claim.} To estimate $Q(r,x,y)$, using
\eqref{interm3}, we notice that
\begin{equation*}
\frac{y^2\sin x\cos
x}{r^2}-\frac{y\sin^2x}{r^3}=\frac{1}{r}\left[y^2\frac{\sin2x}{2r}-y\left(\frac{\sin
x}{r}\right)^2\right]\leq\frac{1}{r}(M^3-a^2m^3),
\end{equation*}
hence we find
\begin{equation}\label{interm5}
Q(r,x,y)\frac{2r^2}{(p-4)\sin2x}\geq\frac{1}{r}\left[(p-3)(M^3-a^2m^3)+m^3-M^3\right].
\end{equation}
Since $a<1$, $p\in(1,2)$ and obviously $m<M$, the right-hand side of
\eqref{interm5} is negative. Thus, using once more \eqref{interm3},
we finally get
\begin{equation}\label{interm6}
Q(r,x,y)\leq\frac{1}{r^2}M_2(a), \quad
M_2(a)=\frac{M(p-4)}{2}\left[(p-3)(M^3-a^2m^3)+m^3-M^3\right]>0.
\end{equation}
In order to estimate $R$, we first notice that \eqref{interm3} and
\eqref{interm4} imply
$$
\frac{y^2\cos2x}{r^2}-\frac{y\sin2x}{r^3}\geq\frac{am^2-2M^2}{r^2},
$$
hence
\begin{equation*}
\left(y^2+\frac{\sin^2x}{r^2}\right)^{-1}R(r,x,y)\leq\frac{1}{r^2}\left[(p-3)(am^2-2M^2)-a^3m^2-2a^{4}m^2\right].
\end{equation*}
It follows that
\begin{equation}\label{interm7}
R(r,x,y)\leq\frac{1}{r^2}M_3(a), \quad
M_3(a)=(1+a^2)m^2\left[(p-3)(am^2-2M^2)-a^3m^2-2a^{4}m^2\right].
\end{equation}
We notice that, for $a$ sufficiently close to 1, $M_3(a)<0$.
Moreover, from \eqref{interm6} and \eqref{interm7}, and taking into
account that $aM\leq m\leq M$, we have
\begin{equation*}
\begin{split}
M_2(a)+M_3(a)&=\frac{(p-4)M}{2}\left[(p-3)(M^3-a^2m^3)+m^3-M^3\right]\\&+(1+a^2)m^2\left[(p-3)(am^2-2M^2)-a^3m^2-2a^4m^2\right]\\
&\leq\frac{(p-4)M}{2}\left[(p-4)M^3+(a^2(3-p)+1)a^3M^3\right]\\&+a^2(1+a^2)M^2\left[2(3-p)M^2-a^2(a(3-p)+a^3+{2a^2})M^2\right]\\
&=M^4G(a,p),
\end{split}
\end{equation*}
where it is immediate to see that
$$
\lim\limits_{a\to1}G(a,p)=-2p<0.
$$
Thus, for $a$ sufficiently close to 1 we get that $M_2(a)+M_3(a)<0$
and the claim is proved.
\end{proof}

\section{Local well-posedness}\label{sec.lwp}

In this section we show  that the Dirichlet problem
\eqref{pHarmDirichlet} is well-posed at least locally when the
initial condition $h_0$ satisfies \eqref{initcond}.

\subsection{Uniqueness. Proof of Theorem
\ref{th.unique}}\label{subsec.unique}

We begin with the following easy fact.
\begin{lemma}\label{lem.subs}
For all $\lambda>0$, the function
$\Phi_{\lambda}(r):=2\arctan(\lambda r)$ is a subsolution to
\eqref{pHarmRotSim} and the function
$\Psi_{\lambda}(r):=\pi-\Phi_{\lambda}(r)$ is a supersolution to
\eqref{pHarmRotSim}.
\end{lemma}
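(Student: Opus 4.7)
The plan is to exploit a remarkable algebraic identity satisfied by $\Phi_\lambda(r) = 2\arctan(\lambda r)$. Using the double-angle formula for the sine with $\tan\alpha = \lambda r$, one has
\[
\Phi_{\lambda,r} = \frac{2\lambda}{1+\lambda^2 r^2}, \qquad \sin\Phi_\lambda = \frac{2\lambda r}{1+\lambda^2 r^2}, \qquad \cos\Phi_\lambda = \frac{1-\lambda^2 r^2}{1+\lambda^2 r^2},
\]
so that $r\,\Phi_{\lambda,r} = \sin\Phi_\lambda$. This single relation collapses all occurrences of $\sin\Phi_\lambda/r$ appearing in $A(\Phi_\lambda)$ into $\Phi_{\lambda,r}$, leaving only a tame trigonometric factor $\cos\Phi_\lambda$ to carry the dependence on $\Phi_\lambda$.

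Next I would rewrite the six summands of $A(\Phi_\lambda)$ using this identity. In particular, $\sin^2\Phi_\lambda/r^2$ becomes $\Phi_{\lambda,r}^2$, each $\sin\Phi_\lambda\cos\Phi_\lambda/r^2$ becomes $\Phi_{\lambda,r}\cos\Phi_\lambda/r$, and $\sin^3\Phi_\lambda\cos\Phi_\lambda/r^4$ becomes $\Phi_{\lambda,r}^3\cos\Phi_\lambda/r$. After grouping, the terms carrying $\cos\Phi_\lambda$ combine as $(p-3)(\cos\Phi_\lambda -1) + (1-\cos\Phi_\lambda) = (p-4)(\cos\Phi_\lambda-1)$, producing
\[
A(\Phi_\lambda) = p\,\Phi_{\lambda,r}^2 \Phi_{\lambda,rr} + (p-4)\,\frac{\Phi_{\lambda,r}^3 (\cos\Phi_\lambda - 1)}{r}.
\]
Plugging in the explicit expressions shows that both summands are equal (up to sign) to a common multiple of $\frac{16\lambda^5 r}{(1+\lambda^2 r^2)^4}$, and the whole expression simplifies to $(4-2p)\cdot\frac{16\lambda^5 r}{(1+\lambda^2 r^2)^4}$, which is strictly positive since $p\in(1,2)$. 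Because $\Phi_{\lambda,t}\equiv 0$ and $J(\Phi_\lambda)^{(p-4)/2}>0$ for $r>0$, this yields $\Phi_{\lambda,t} \leq J(\Phi_\lambda)^{(p-4)/2}A(\Phi_\lambda)$, establishing the subsolution property.

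For the supersolution $\Psi_\lambda = \pi-\Phi_\lambda$ I would avoid repeating any computation and instead use a symmetry argument. Under the substitution $h\mapsto \pi-h$, we have $h_r\mapsto -h_r$, $h_{rr}\mapsto -h_{rr}$, $\sin h\mapsto \sin h$ and $\cos h\mapsto -\cos h$. Inspecting each of the six terms in $A$ one checks that all of them have odd overall parity under this substitution, whence $A(\Psi_\lambda)=-A(\Phi_\lambda)<0$, while $J(\Psi_\lambda) = J(\Phi_\lambda)$ since both $h_r$ and $\sin h$ enter only through their squares. Combined with $\Psi_{\lambda,t}\equiv 0$, this produces $\Psi_{\lambda,t}\geq J(\Psi_\lambda)^{(p-4)/2}A(\Psi_\lambda)$, as required.

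The only real difficulty is the bookkeeping needed to collapse the six terms of $A(\Phi_\lambda)$ to the compact form above; once the cancellation $(p-3)(\cos\Phi_\lambda-1)+(1-\cos\Phi_\lambda) = (p-4)(\cos\Phi_\lambda-1)$ is spotted, the final sign is transparent and depends crucially on the fast-diffusion hypothesis $p<2$, which is precisely the regime of this paper.
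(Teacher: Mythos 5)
Your proof is correct and reaches the same conclusion as the paper's, but it is more structured: where the paper simply says ``after straightforward calculations'' and records the final formula for $A(\Phi_\lambda)$ (and dismisses $\Psi_\lambda$ with ``in a similar way''), you first exploit the identity $r\Phi_{\lambda,r}=\sin\Phi_\lambda$ to collapse $A(\Phi_\lambda)$ to the compact intermediate form $p\,\Phi_{\lambda,r}^2\Phi_{\lambda,rr}+(p-4)\,\Phi_{\lambda,r}^3(\cos\Phi_\lambda-1)/r$, which makes the algebra transparent, and then dispose of the supersolution by the parity observation that $h\mapsto\pi-h$ sends $A(h)\mapsto -A(h)$ while leaving $J(h)$ invariant. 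Both observations are genuine improvements in clarity over the brute-force computation. One small note: your final value $A(\Phi_\lambda)=\frac{32(2-p)\lambda^5 r}{(1+\lambda^2 r^2)^4}$ is the correct one; the paper states the denominator with exponent $2$, which is a typo (it has no bearing on the sign, which is all that is used).
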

\begin{proof}
We note that
$$
\Phi_{\lambda}'(r)=\frac{2\lambda}{1+\lambda^2r^2}, \quad
\Phi_{\lambda}''(r)=-\frac{4\lambda^3r}{(1+\lambda^2r^2)^2},
$$
and
$$
\sin\Phi_{\lambda}(r)=\frac{2\lambda r}{1+\lambda^2r^2}, \quad
\cos\Phi_{\lambda}(r)=\frac{1-\lambda^2r^2}{1+\lambda^2r^2}.
$$
After straightforward calculations, we find that
$$
A(\Phi_{\lambda}(r))=\frac{32(2-p)\lambda^5r}{(1+\lambda^2r^2)^2}>0,
$$
since $1<p<2$. In a similar way $\Psi_{\lambda}$ is a supersolution,
we omit the details.
\end{proof}
\begin{proof}[Proof of Theorem \ref{th.unique}] The uniqueness
follows directly from Proposition \ref{prop.comp}. Let then $h$ be a
classical solution with initial condition $h_0$ satisfying
\eqref{initcond}.

\noindent \textbf{Step 1.} We show first that $h(t,0)=0$ for all
$t\in(0,T)$. We argue by contradiction and assume that this is not
true; hence, by continuity there exists $t_0\in(0,T)$ such that
$h(t_0,0)\in(-\pi/2,\pi/2)\setminus\{0\}$. We then deduce from the
equation \eqref{pHarmRotSim} that, as $t=t_0$ and $r\to0^+$,
\begin{equation*}
\begin{split}
h_t&=\left(h_r^2+\frac{\sin^2h}{r^2}\right)^{(p-4)/2}\left[h_{rr}\left((p-1)h_r^2+\frac{\sin^2h}{r^2}\right)\right.\\
&\left.+\frac{h_r}{r}\left(h_r^2+(3-p)\frac{\sin^2h}{r^2}\right)-\frac{\sin
h\cos h}{r^2}\left((3-p)h_r^2+\frac{\sin^2h}{r^2}\right)\right]\\
&=-\frac{\cos h}{r^p}(1+o(1))\quad {\rm as \ } r\to 0^+,
\end{split}
\end{equation*}
which contradicts the regularity of $h$ as a classical solution.
Hence $h(t,0)=0$ for any $t\in(0,T)$.

\medskip

\noindent \textbf{Step 2.} We prove the rest of part (a) in Theorem
\ref{th.unique}. By assumptions in \eqref{initcond}, there exists
$T_1>0$ such that $h_r(t,0)>0$ and $h(t,r)\in(0,\pi)$ for all
$(t,r)\in(0,T_1)\times(0,1]$. Thus, we apply Proposition
\ref{prop.comp} in $(0,T_1)\times(0,1]$ coupled with Lemma
\ref{lem.subs} with $\lambda$ sufficiently small, to get
$$
\Phi_{\lambda}(r)\leq h(t,r)\leq\Psi_{\lambda}(r), \quad (t,r)\in
(0,T_1)\times(0,1].
$$
But since $h(T_1,0)=0$, we notice that
$$
h_r(T_1,0)=\lim\limits_{r\to0^+}\frac{h(T_1,r)}{r}\geq\lim\limits_{r\to0^+}\frac{2\arctan(\lambda
r)}{r}=2\lambda>0,
$$
thus we can restart the argument from $t=T_1$ and extend it up to
$T$ by a standard maximality argument. Finally, since
$$
\frac{\arctan(x)}{x}\geq\frac{\pi}{4}, \quad \hbox{for} \ \hbox{all}
\ x\in(0,1],
$$
we can replace $\Phi_{\lambda}(r)$ by $\delta r$ and
$\Psi_{\lambda}(r)$ by $\pi-\delta r$ in the above inequality,
provided $2\delta\leq\pi\lambda$. In particular, we also get that
$h_r(t,0)\geq\delta$, for all $t\in[0,T)$.

\medskip

\noindent \textbf{Step 3.} We prove part (b) in Theorem
\ref{th.unique}. Since $h(t,0)=0$ for all $t>0$, we have
$h_t(t,0)=0$, hence $A(h)=0$ at $(t,0)$. We evaluate $A(h)$ at
$(t,r)$ as $r\to0^+$, taking into account that,
$$
\frac{h_r^2\sin h\cos h}{r^2}\sim\frac{h_r^3}{r}, \quad
\frac{h_r\sin^2h}{r^3}\sim\frac{h_r^3}{r}, \quad {\rm as} \ r\to0^+
$$
It follows that the singular terms in the expression of $A(h)$
cancel (in a first order approximation) as $r\to0^+$, whence
$ph_{rr}(t,0)h_r^{2}(t,0)=0$ which implies the first part of the
conclusion. The second one is trivial, since
$$
0=h_t(t,1)=F(h(t,\cdot))|_{r=1}.
$$
\end{proof}

\subsection{Local existence and continuous
dependence}\label{subsec.lex}

As explained in the Introduction, Eq. \eqref{pHarmRotSim} has
singular coefficients, thus the standard existence theory for ODEs
cannot be applied directly. In order to simplify its writing and, in
a first step, transform its trigonometric nonlinearities into
algebraic ones, we introduce the following change of variables:
$$
h(t,r)=2\arctan(ru(t,r)).
$$
The equation
satisfied by $u$ reads as follows:
\begin{equation*}
\begin{split}
\frac{2j^{(4-p)/2}}{1+r^2u^2}u_t&=\frac{8}{(r^2u^2 + 1)^4}\left[(ru_{rr}(r^2(p - 1)u_r^2 + 2r(p - 1)uu_r + pu^2)(r^2u^2 + 1)\right. \\ & + 2r^5(1 - p)uu_r^4 - r^2u_r^3(r^2(6p - 7)u^2 - 2p + 1) \\ & \left.- ruu_r^2(3r^2(3p - 4)u^2 - 5p + 4) + u_ru^2(3p - r^2(9p - 16)u_r^2) + 4r(2 - p)u^5)\right],
\end{split}
\end{equation*}
with
 $\displaystyle
j(r,u,w)=\frac{4(u+rw)^2+4u^2}{(1+r^2u^2)^2}
$.
The above equation can be written in a
simplified form as
\begin{equation}\label{eq.transf}
u_t=\left[a(r,u,u_r)u_{rr}+\frac{b(r,u,u_r)}{r}u_r+f(r,u,u_r)\right]=:G(u),
\end{equation}
with
\begin{equation}\label{eq.a}
a(r,u,w)=j(r,u,w)^{(p-4)/2}\frac{4(p-1)(u+rw)^2+4u^2}{(1+r^2u^2)^2},
\end{equation}
\begin{equation}\label{eq.b}
b(r,u,w)=j(r,u,w)^{(p-4)/2}\frac{12pu^2}{(1+r^2u^2)^3},
\end{equation}
and $f(r,u,w)$ gathers the remaining terms in the equation above, we
omit its precise expression.
The initial and Dirichlet conditions become
\begin{equation}\label{eq.transfinit}
u_r(t,0)=0, \ u(t,1)=u_0(1), \quad {\rm for} \ t>0, \quad
u(0,r)=u_0(r), \quad {\rm for} \ r\in[0,1].
\end{equation}
We then follow the ideas in \cite[Section 4]{GM} and consider the
linearization of the right-hand side above around compatible initial
data $u_0$ such that $u_0>0$ and $u_{0,r}(0)=0$. To this end, we
consider the Fr\'echet derivative of the right-hand side of
\eqref{eq.transf} around a generic point $(u_0,u_{0,r})$ and obtain
that the linearized operator has the form:
\begin{equation}\label{linearized}
L(v):=a(r,u_0,u_{0,r})v''+\frac{b(r,u_0,u_{0,r})}{r}v'+c(r,u_0,u_{0,r})v'+d(r,u_0,u_{0,r})v,
\end{equation}
where $a$, $b$ are given respectively in
\eqref{eq.a} and \eqref{eq.b} and
$$
c=\frac{\partial a}{\partial w}u_{0,rr}+\frac{\partial b}{\partial w}\frac{u_{0,r}}{r}+\frac{\partial f}{\partial w}
$$
and
$$
d=\frac{\partial a}{\partial u}u_{0,rr}+\frac{\partial b}{\partial u}\frac{u_{0,r}}{r}+\frac{\partial f}{\partial u}.
$$We note that no singular terms near $r=0$ appear in $c$ and $d$ since $\frac{u_{0,r}}{r}\stackrel{r\to 0^+}\sim u_{0,rr}$.
Without loss of generality, we will assume that $a(r,u_0,u_{0,r})|_{r=0}=1$. We define next the following operators:
\begin{equation}\label{linearized.0}
L_0v:=v''+\frac{3}{r}v',
\end{equation}
and
\begin{equation}\label{linearized.1}
L_{\e}v:=a(r)v''+\frac{b(r)}{r+\e}v'+c(r)v'+d(r)v,
\end{equation}
where we have omitted from the expressions of $a$, $b$, $c$ and $d$
the dependence on $u_0$ for simplicity. From the previous
considerations, we formally expect that
\begin{equation}\label{approx.L}
Lv\sim L_0v, \ \hbox{as} \ r\to 0^+, \quad Lv\sim L_{\e}v, \
\hbox{for} \ r>r_0>0.
\end{equation}
Following \cite[Section 4]{GM}, we will employ the theory of
sectorial operators. We do not recall the definition of a sectorial
operator, that can be found in \cite[Definition 2.0.1]{Lunardi} or
\cite[Definition 4.1]{GM}, but we state the following sufficient
condition that we use in the sequel.
\begin{proposition}\label{prop.sector}
Let $X$ be a complex Banach space and $A:D(A)\subseteq X\mapsto X$
be a linear operator. If there exist $\omega\in\real$ and $M>0$ such
that the resolvent $\varrho(A)=\{\lambda\in\mathbb{C}:\lambda-A \
{\rm is} \ {\rm invertible}\}$ contains the set
$S:=\{\lambda\in\mathbb{C}: {\rm Re}\lambda\geq\omega\}$ and
\begin{equation}\label{est.resol}
\|\lambda(\lambda-A)^{-1}x\|\leq M\|x\|
\end{equation}
holds true for all $\lambda\in S$ and $x\in X$, then $A$ is
sectorial from $D(A)$ to $X$.
\end{proposition}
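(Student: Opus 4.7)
The plan is to upgrade the half-plane resolvent estimate in the hypothesis to an estimate on a proper sector $\Sigma_\theta = \{\lambda : |\arg(\lambda - \omega)| < \theta\}$ with $\theta > \pi/2$, which is what the definition of sectoriality in \cite[Definition 2.0.1]{Lunardi} demands. After the substitution $A \mapsto A - \omega I$ (which merely translates the spectrum), I may assume without loss of generality that $\omega = 0$; the hypothesis then reads $\varrho(A) \supseteq \{{\rm Re}\,\lambda \geq 0\}$ and $\|(\lambda - A)^{-1}\| \leq M/|\lambda|$ for every $\lambda$ in that half-plane.

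The core tool is the Neumann series representation of the resolvent. For any $\lambda_0 \in \varrho(A)$ and any $\lambda \in \mathbb{C}$ satisfying $|\lambda - \lambda_0|\,\|(\lambda_0 - A)^{-1}\| < 1$, one has
$$(\lambda - A)^{-1} = \sum_{n=0}^{\infty} (-1)^n (\lambda - \lambda_0)^n (\lambda_0 - A)^{-(n+1)}.$$
I apply this at each point $\lambda_0$ on the imaginary axis, where the hypothesis yields $\|(\lambda_0 - A)^{-1}\| \leq M/|\lambda_0|$; hence the series converges on the open disk $D_{\lambda_0}$ of radius $|\lambda_0|/M$ centered at $\lambda_0$. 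A direct plane-geometry computation (the tangent half-angle of $D_{\lambda_0}$ viewed from the origin equals $\arcsin(1/M)$, independently of $\lambda_0$) shows that the union of these disks, as $\lambda_0$ ranges over the imaginary axis minus the origin, together with the open right half-plane, covers the sector $\{\lambda \neq 0 : |\arg \lambda| < \pi/2 + \arcsin(1/M)\}$ (up to the trivial case $M<1$, which only makes the bound stronger). Restricting to the concentric half-radius disks $|\lambda - \lambda_0| \leq |\lambda_0|/(2M)$ still covers a sector $\Sigma_{\theta'}$ with some $\theta' > \pi/2$.

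For the quantitative resolvent bound, summing the geometric series gives
$$\|(\lambda - A)^{-1}\| \leq \frac{\|(\lambda_0 - A)^{-1}\|}{1 - |\lambda - \lambda_0|\,\|(\lambda_0 - A)^{-1}\|} \leq \frac{2M}{|\lambda_0|}$$
whenever $|\lambda - \lambda_0| \leq |\lambda_0|/(2M)$. On such a disk $|\lambda|$ is comparable to $|\lambda_0|$ up to a universal factor, so one deduces $\|\lambda(\lambda - A)^{-1}\| \leq C(M)$ for some constant depending only on $M$; combined with the original bound on the right half-plane this produces the sectorial estimate throughout $\Sigma_{\theta'}$, and shifting back by $\omega$ restores the general statement.

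The main obstacle is geometric bookkeeping rather than any genuine analytic difficulty: one must verify that the family of Neumann disks really does cover an angular neighborhood of the imaginary axis of uniform opening, and that the series bound, the comparison $|\lambda| \asymp |\lambda_0|$, and the resulting constants fit together to yield the sectorial estimate $\|(\lambda - A)^{-1}\| \leq M'/|\lambda|$ with constants depending only on $M$. Once these geometric checks are carried out, both the extension of $\varrho(A)$ to $\Sigma_{\theta'}$ and the analyticity of $\lambda \mapsto (\lambda - A)^{-1}$ there follow automatically from the convergence of the Neumann representation.
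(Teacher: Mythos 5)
The paper does not prove this proposition: it is stated as a known sufficient condition taken from Lunardi's monograph (it is essentially \cite[Proposition~2.1.11]{Lunardi}), and your Neumann-series argument reproduces the standard proof of that result. The argument is correct: after translating by $\omega$, the disks $\{|\lambda-\lambda_0|<|\lambda_0|/(2M)\}$ with $\lambda_0$ on the imaginary axis have tangent half-angle $\arcsin(1/(2M))$ from the origin, so together with the open right half-plane they cover a sector of opening $\pi/2+\arcsin(1/(2M))>\pi/2$; on each such disk the geometric series gives $\|(\lambda-A)^{-1}\|\leq 2M/|\lambda_0|$ and $|\lambda_0|\geq |\lambda|/(1+1/(2M))$, yielding $\|(\lambda-A)^{-1}\|\leq (2M+1)/|\lambda|$, which is exactly the sectorial estimate required by \cite[Definition~2.0.1]{Lunardi}.
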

Our strategy is to show that the linearized operator $L$ in
\eqref{linearized} is sectorial from $D(L):=\{v\in C^2([0,1]):
v'(0)=v(1)=0\}$ to $X=C([0,1])$. In order to do it, we take as
starting point the formal approximations in \eqref{approx.L} and the
fact that the operators $L_0$ and $L_{\e}$, defined in
\eqref{linearized.0} and \eqref{linearized.1} respectively, are both
sectorial. For $L_0$, this is proved by Angenent in \cite[Lemma
4.3]{Ang} in a more general context, see also \cite[Theorem
4.3]{GM}, while for $L_{\e}$, this follows from a more general
theory in \cite[Corollary 3.1.21.(ii) and Theorem 3.1.19]{Lunardi}.
We can combine these two facts into the following result:
\begin{proposition}
The operator $L$ defined in \eqref{linearized} is sectorial from
$D(L):=\{v\in C^2([0,1]): v'(0)=v(1)=0\}$ to $X=C([0,1])$. In
addition, there exists $C=C(M)>1$ depending on
$$
M:=(\inf a)^{-1}+(\inf
b)^{-1}+\|a\|_{\infty}+\|b\|_{\infty}+\|c\|_{\infty}+\|d\|_{\infty},
$$
where the infima and norms are taking over $r\in[0,1]$, such that,
for all $v\in D(L)$ and $\lambda\in\mathbb{C}$ such that ${\rm
Re}\lambda\geq C(M)$, we have
\begin{equation}\label{est.sector}
|\lambda|\|v\|_{\infty}+|\lambda|^{1/2}\|v'\|_{\infty}+\|v''\|_{\infty}\leq
C(M)\|(\lambda-L)v\|_{\infty}.
\end{equation}
\end{proposition}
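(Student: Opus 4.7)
The plan is to verify the hypotheses of Proposition \ref{prop.sector} by a localization and patching argument, combining the sectoriality of the two model operators $L_0$ and $L_\e$ from \eqref{linearized.0}--\eqref{linearized.1}. The guiding principle, made precise by \eqref{approx.L}, is that near $r=0$ the singular first-order term $b(r)v'/r$ in $L$ is well approximated by $3v'/r$, so $L\sim L_0$ there, while on any interval bounded away from $0$ the coefficient $b(r)/r$ is indistinguishable (up to an $O(\e)$ error) from $b(r)/(r+\e)$, so $L\sim L_\e$. Constructing an approximate resolvent on each region and patching yields the true resolvent for ${\rm Re}\,\lambda$ large via a Neumann series.

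\textbf{Matching at the singularity.} The algebraic step making the approximation near $r=0$ quantitative is the observation that, with the normalization $a(r,u_0,u_{0,r})|_{r=0}=1$, a direct evaluation of \eqref{eq.a}--\eqref{eq.b} at $r=0$ using the compatibility $u_{0,r}(0)=0$ (which is enforced by \eqref{eq.transfinit}) gives exactly $b(r,u_0,u_{0,r})|_{r=0}=3$, matching the coefficient of $r^{-1}\partial_r$ in $L_0$. Consequently, on any interval $[0,2\delta]$ with $\delta$ small the perturbation
$$L-L_0=\bigl(a(r)-1\bigr)\partial_{rr}+\frac{b(r)-3}{r}\partial_r+c(r)\partial_r+d(r)$$
has sup-norm controlled by a modulus $\omega_M(\delta)\to 0$ as $\delta\to 0$; the potentially singular quotient $(b(r)-3)/r$ is bounded thanks to the $C^1$ regularity of $b$ at $0$, inherited from $u_0\in C^2$. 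On $[\delta,1]$, by contrast, the difference $L-L_\e$ has coefficients of size $O(\e)$ and no singularity.

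\textbf{Patching and Neumann series.} Fix smooth cutoffs $\eta_0,\eta_1\in C^\infty([0,1])$ with $\eta_0+\eta_1=1$, $\eta_0\equiv 1$ on $[0,\delta]$ and $\eta_1\equiv 1$ on $[2\delta,1]$, and auxiliary $\chi_0,\chi_1$ with $\chi_j\equiv 1$ on $\supp\eta_j$ and satisfying $\chi_0'(0)=0$, $\chi_1(1)=0$ (so the boundary conditions of $D(L)$ are respected). Given $f\in C([0,1])$ and $\lambda\in\complex$ with ${\rm Re}\,\lambda$ large, define $v_0:=(\lambda-L_0)^{-1}(\eta_0 f)$ and $v_1:=(\lambda-L_\e)^{-1}(\eta_1 f)$; the sectoriality of $L_0$ and $L_\e$ from \cite[Lemma 4.3]{Ang} and \cite[Theorem 3.1.19]{Lunardi} yields the full bound
$$|\lambda|\|v_j\|_\infty+|\lambda|^{1/2}\|v_j'\|_\infty+\|v_j''\|_\infty\le C(M)\|\eta_j f\|_\infty.$$
Setting $v:=\chi_0 v_0+\chi_1 v_1$ and expanding gives $(\lambda-L)v=f+E_\lambda f$, where the error $E_\lambda f$ assembles the commutators $[L,\chi_j]v_j$ (at most first-order in $v_j$, with coefficients supported in $\{\chi_j'\neq 0\}\subset[\delta,2\delta]$) and the local perturbations $\chi_j(L_j-L)v_j$ bounded as above. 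Combining the resolvent estimates on $v_j$ with the smallness of $L-L_j$ on $\supp\chi_j$ produces $\|E_\lambda f\|_\infty\le C(M)\bigl(|\lambda|^{-1/2}+\omega_M(\delta)+\e\bigr)\|f\|_\infty$; fixing $\delta,\e$ small depending only on $M$ and taking ${\rm Re}\,\lambda\ge C(M)$ large enough gives $\|E_\lambda\|\le 1/2$. A Neumann series then produces $(\lambda-L)^{-1}$ on the half-plane ${\rm Re}\,\lambda\ge C(M)$, and propagating the patched bounds through each iterate yields \eqref{est.sector}; Proposition \ref{prop.sector} then gives sectoriality of $L$.

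\textbf{Main obstacle.} The principal technical difficulty is to make the near-origin perturbation $((b(r)-3)/r)v'$ quantitatively small: one must exploit both the exact structural identity $b(0)=3$ (which relies on the normalization and the compatibility $u_{0,r}(0)=0$) and enough smoothness of $b$ at $0$ to control $(b(r)-3)/r$ in sup-norm by a modulus depending only on $M$ and tending to $0$ with $\delta$. A secondary technical point is that the Neumann series must be run in the stronger norm appearing in \eqref{est.sector} rather than in the $|\lambda|\|\cdot\|_\infty$ norm alone, which forces one to iterate the full two-term sectorial estimate of each model operator rather than only their resolvent bounds.
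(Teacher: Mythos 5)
Your proposal reproduces the core of the paper's surjectivity argument (Steps 2--3 of the paper's proof): partition of unity, approximate resolvents built from $(\lambda-L_0)^{-1}$ and $(\lambda-L_\e)^{-1}$, commutator and perturbation estimates, and a Neumann series to invert $A_\lambda=(\lambda-L)T_\lambda$. Your identification of $b(0)=3$ via the normalization $a(0)=1$ and the compatibility $u_{0,r}(0)=0$ is also correct and matches the paper's implicit setup.

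However, there is a genuine gap: the Neumann series produces only a \emph{right} inverse. Inverting $A_\lambda$ gives $(\lambda-L)\,T_\lambda A_\lambda^{-1}=\mathrm{Id}$ on $X$, hence surjectivity of $\lambda-L$, but it does not, by itself, show that $\lambda-L$ is injective on $D(L)$. Without injectivity you cannot conclude that $T_\lambda A_\lambda^{-1}$ is the two-sided resolvent, and you cannot promote the bound on the constructed solution $v=T_\lambda A_\lambda^{-1}f$ to the a priori estimate \eqref{est.sector} for arbitrary $v\in D(L)$: if $\ker(\lambda-L)$ were nontrivial, an arbitrary $v$ could differ from $T_\lambda A_\lambda^{-1}(\lambda-L)v$ by a kernel element of unbounded size. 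The paper handles exactly this point in a separate Step 1, by multiplying $(\lambda-L)v=0$ by $r\bar v/a$, integrating by parts over $(0,1)$ (using $v(1)=0$, $\lim_{r\to 0}(b/a)=3$, and the bound $|(b/a)'|\le \overline{C}r$ from \eqref{interm52}), and deducing $v\equiv 0$ once $\mathrm{Re}\,\lambda$ is large. You need to supply this injectivity argument (or some substitute, e.g.\ compactness of the resolvent plus a Fredholm-alternative argument, or a direct a priori estimate valid for all $v\in D(L)$) before the rest of your patching argument closes the proof.
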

\begin{proof}
The proof follows some ideas in \cite[Proposition 4.5]{GM} and uses
Proposition \ref{prop.sector}. We divide the proof into several
steps.

\noindent \textbf{Step 1.} We show that there exists some
$\omega_1>0$ sufficiently large such that $\lambda-L$ is injective
for all $\lambda\in\mathbb{C}$ with ${\rm Re}\lambda\geq\omega_1$.
Let $v\in D(L)$ be such that $(\lambda-L)v=0$. We want to show that
for $\omega_1$ sufficiently large, this implies $v\equiv0$. We begin
by the following calculation:
\begin{equation*}
\begin{split}
0&=\frac{1}{a}\left[r\overline{v}(\lambda-L)v+rv\overline{(\lambda-L)v}\right]\\
&=\frac{1}{a}\left[2r{\rm
Re}\lambda|v|^2-ar(\overline{v}v''+v\overline{v}'')-b(\overline{v}v'+v\overline{v}')-cr(\overline{v}v'+v\overline{v}')-2rd|v|^2\right]\\
&=-r(\overline{v}v''+v\overline{v}'')-\left(\frac{b}{a}+\frac{rc}{a}\right)(|v|^2)'-\frac{2r(d-{\rm
Re}\lambda)}{a}|v|^2.
\end{split}
\end{equation*}
Let $a_0=\inf a>0$, $b_0=\inf b>0$, $A_0=\|a\|_{\infty}$,
$B_0=\|b\|_{\infty}$, $C_0=\|c\|_{\infty}$, $D_0=\|d\|_{\infty}$. We
integrate over $(0,1)$ the previous equality and estimate it term by
term as follows.

\noindent (i) After integrating by parts and taking into account
that $v(1)=0$ and that $\lim\limits_{r\to0}(b(r)/a(r))=3$, we have:
\begin{equation*}
\begin{split}
-\int_0^1r(\overline{v}v''+v\overline{v}'')\,dr&-\int_0^1\frac{b}{a}(|v|^2)'\,dr=2\int_0^1r|v'|^2\,dr-\int_0^1\left(\frac{b}{a}-1\right)(|v|^2)'\,dr\\
&=2\int_0^1r|v'|^2\,dr-\left(\frac{b}{a}-1\right)|v|^2\Big|_{0}^1+\int_0^1\left(\frac{b}{a}\right)'|v|^2\,dr\\
&=2\int_0^1r|v'|^2\,dr+2|v(0)|^2+\int_0^1\left(\frac{b}{a}\right)'|v|^2\,dr.
\end{split}
\end{equation*}

\noindent (ii) We easily find that $$2{\rm
Re}\lambda\int_0^1\frac{r}{a}|v|^2 \,dr\geq\frac{2{\rm
Re}\lambda}{A_0}\int_0^1r|v|^2\,dr.$$

\noindent (iii) After using H\"older and Young's inequalities, we
get
\begin{equation*}
\begin{split}
\left|\int_0^1\frac{rc}{a}(|v|^2)'\,dr\right|&\leq\frac{2C_0}{a_0}\left(\int_0^1r|v|^2\,dr\right)^{1/2}\left(\int_0^1r|v'|^2\,dr\right)^{1/2}\\
&\leq\int_0^1r|v'|^2\,dr+\frac{C_0^2}{a_0^2}\int_0^1r|v|^2\,dr.
\end{split}
\end{equation*}

\noindent (iv) Obviously,
$$
\left|\int_0^1\frac{2rd}{a}|v|^2\,dr\right|\leq\frac{2D_0}{a_0}\int_0^1r|v|^2\,dr.
$$

Gathering the estimates (i)-(iv) above, we obtain
\begin{equation}\label{interm8}
2\int_0^1r|v'|^2\,dr+2|v(0)|^2+\left(\frac{2{\rm
Re}\lambda}{A_0}-\frac{C_0^2}{a_0^2}-\frac{2D_0}{a_0}\right)\int_0^1r|v|^2\,dr\leq-\int_0^1\left(\frac{b}{a}\right)'|v|^2\,dr.
\end{equation}
On the other hand,
\begin{equation*}
\begin{split}
\left(\frac{b}{a}\right)'(r)&=-\frac{ 6rpu_0}{\left[r^2(p -
1)u_{0,r}^2 + 2r(p - 1)u_0u_{0,r} + pu_0^2\right]^2(r^2u_0^2 + 1)^2}\times
\\ & \Big\{(p - 1)u_0u_{0,rr}(ru_{0,r} + u_0)(r^2u_0^2 + 1)+ r(1 - p)u_{0,r}^3 \\ & \left.+ u_0^2\left[3r^2(p - 1)u_0u_r^2 +
u_{0,r}(r(4p - 3)u_0^2)+
pu_0^3\right]+\frac{(p-1)u_0^2u_{0,r}}{r}\right\}
\end{split}
\end{equation*}
and taking into account that $u_0(0)>0$, $u_0'(0)=0$ and $u_0\in
C^2([0,1])$, we find that there exists some $\overline{C}>0$
sufficiently large such that
\begin{equation}\label{interm52}
\left|\left(\frac{b}{a}\right)'(r)\right|\leq\overline{C}r.
\end{equation}
Replacing this estimate into \eqref{interm8}, we obtain that
$$
2\int_0^1r|v'|^2\,dr+2|v(0)|^2+\left(\frac{2{\rm
Re}\lambda}{A_0}-\frac{C_0^2}{a_0^2}-\frac{2D_0}{a_0}-\overline{C}\right)\int_0^1r|v|^2\,dr\leq0,
$$
whence $v\equiv0$ if $\lambda\in\mathbb{C}$ is taken with ${\rm
Re}\lambda$ sufficiently large.

\medskip

\noindent \textbf{Step 2.} In order to prove the surjectivity of
$\lambda-L$, we characterize the inverse of $\lambda-L$ using a
gluing between approximate inverses, through a partition of unity.
Let $\e\in (0,1)$ to be chosen later. Since $a(0)=1$ and
$b(0)=3$, $a$, $b\in C([0,1])$, there exists $\delta=\delta(\e)>0$
such that
\begin{equation}\label{interm.est}
|a(r)-1|<\e, \quad |b(r)-3|<\e, \quad \hbox{for} \ \hbox{all} \
r\in(0,2\delta).
\end{equation}
Let $\varphi_0$, $\varphi_1\in C^{\infty}([0,1])$ such that
$\hbox{supp}\varphi_0\subseteq[0,2\delta]$,
$\hbox{supp}\varphi_1\subseteq[\delta,1]$ and
$\varphi_0^2(r)+\varphi_1^2(r)=1$ for any $r\in[0,1]$. Recall that
the operators $L_0$, $L_{\e}$ introduced in \eqref{linearized.0},
respectively \eqref{linearized.1}, are sectorial, hence there exists
$\omega(\e)\in[0,\infty)$ such that, for any $\lambda\in\mathbb{C}$
with ${\rm Re}\lambda\geq\omega(\e)$, $\lambda-L_0$ and
$\lambda-L_{\e}$ are both invertible. Thus, for any
$\lambda\in\mathbb{C}$ with ${\rm Re}\lambda\geq\omega(\e)$ and for
any $f\in C([0,1])$, we can define
$$
v_0:=(\lambda-L_0)^{-1}(\varphi_0f), \quad
v_1:=(\lambda-L_{\e})^{-1}(\varphi_1f),
$$
and the operators
\begin{equation}\label{oper}
T_{\lambda}:C([0,1])\mapsto D(L), \
T_{\lambda}f:=\varphi_0v_0+\varphi_1v_1, \quad
A_{\lambda}:C([0,1])\mapsto C([0,1]), \
A_{\lambda}f:=(\lambda-L)T_{\lambda}f.
\end{equation}
We state the following

\noindent \textbf{Claim.} There exists $\e>0$ sufficiently small and
$\omega_2>\omega(\e)$ sufficiently large such that
\begin{equation}\label{oper2}
\|A_{\lambda}f-f\|_{\infty}\leq\frac{1}{2}\|f\|_{\infty},
\end{equation}
for all $f\in C([0,1])$ and $\lambda\in\mathbb{C}$ with ${\rm
Re}\lambda\geq\omega_2$.

Assuming that the claim is true, then we can proceed exactly as in
the proof of \cite[Proposition 4.5]{GM} to find that first
$A_{\lambda}$ is invertible, then $\lambda-L$ is invertible too for
any $\lambda\in\mathbb{C}$ with ${\rm
Re}\lambda\geq\max\{\omega_1,\omega_2\}$, and the inverse is given
by $(\lambda-L)^{-1}=T_{\lambda}A_{\lambda}^{-1}$. It only remains
to prove the claim.

\medskip

\noindent \textbf{Step 3. Proof of the claim.} Let
$\lambda\in\mathbb{C}$ such that ${\rm
Re}\lambda\geq\max\{1,\omega(\e)\}$, with the notations in Step 2.
Since the operators $L_0$ and $L_{\e}$ are sectorial, there exist
two constants $C>0$ and $C(\e)>0$, the last one depending on $\e$,
such that
\begin{equation}\label{interm9}
|\lambda|\|v_0\|_{\infty}+|\lambda|^{1/2}\|v_0'\|_{\infty}+\|v_0''\|_{\infty}\leq
C\|f\|_{\infty}
\end{equation}
and
\begin{equation}\label{interm10}
|\lambda|\|v_1\|_{\infty}+|\lambda|^{1/2}\|v_1'\|_{\infty}+\|v_1''\|_{\infty}\leq
C(\e)\|f\|_{\infty}.
\end{equation}
By a straightforward calculation, we obtain that
\begin{equation}\label{interm11}
A_{\lambda}f-f=-[L,\varphi_0]v_0-[L,\varphi_1]v_1+\varphi_0(L_0-L)v_0+\varphi_1(L_{\e}-L)v_1,
\end{equation}
where
$$
[L,\varphi_0]v_0:=a\varphi_0''v_0+2a\varphi_0'v_0'+\frac{b}{r}\varphi_0'v_0+c\varphi_0'v_0,
\quad
[L,\varphi_1]v_1:=a\varphi_1''v_1+2a\varphi_1'v_1'+\frac{b}{r}\varphi_1'v_1+c\varphi_1'v_1.
$$
We estimate next the terms in \eqref{interm11}. Recalling that
$\varphi_{i}'=0$ in $[0,\delta]$, for $i=1,2$, and using
\eqref{interm9} and \eqref{interm10} we have
$$
\|[L,\varphi_0]v_0\|_{\infty}\leq
C_1\|v_0'\|_{\infty}+C_2\|v_0\|_{\infty}\leq
C|\lambda|^{-1/2}\|f\|_{\infty}
$$
and
$$
\|[L,\varphi_1]v_1\|_{\infty}\leq
C_3\|v_1'\|_{\infty}+C_4\|v_1\|_{\infty}\leq
C(\e)|\lambda|^{-1/2}\|f\|_{\infty}.
$$
On the other hand, taking into account \eqref{interm.est}, we also
have
\begin{equation*}
\begin{split}
\|\varphi_0(L_0-L)v_0\|_{\infty}&\leq\|\varphi_0(1-a)v_0''+\varphi_0\frac{3-b}{r}v_0'+\varphi_0cv_0'+\varphi_0dv_0\|_{\infty}\\
&\leq\e\|v_0''\|_{\infty}+\e\left\|\frac{v_0'}{r}\right\|_{\infty}+C\|v_0'\|_{\infty}+C\|v_0\|_{\infty}\leq
C(\e+|\lambda|^{-1/2})\|f\|_{\infty},
\end{split}
\end{equation*}
and similarly
$$
\|\varphi_1(L_{\e}-L)v_1\|_{\infty}\leq
C(\e)|\lambda|^{-1/2}\|f\|_{\infty}.
$$
Gathering all the previous estimates and replacing them into
\eqref{interm11}, we obtain that there exist $C_5>0$ and
$\tilde C=\tilde C(\e)>0$, depending only on the previous constants and $\e$,
such that
$$
\|A_{\lambda}f-f\|_{\infty}\leq\left[C_5|\lambda|^{-1/2}+\tilde C(\e)\right]\|f\|_{\infty}.
$$
Choosing then $\lambda\in\mathbb{C}$ such that
$C|\lambda|^{-1/2}<1/4$ and finally choosing $\e>0$ sufficiently
small such that $\tilde C(\e)<1/4$, we end the proof of the claim.

\medskip

\noindent \textbf{Step 4.} It only remains to prove the estimate
\eqref{est.sector}, that will imply immediately \eqref{est.resol}
and in particular that $L$ is sectorial, ending the proof. But this
is identical to the end of the proof of \cite[Proposition 4.5]{GM}
and we omit the details.
\end{proof}
In order to prove the existence of solutions to our nonlinear PDE,
we follow \cite{GM} and use results about sectorial operators
introduced by Lunardi, more precisely \cite[Theorem 4.6]{GM} or
\cite[Theorem 8.1.1 and Corollary 8.3.3]{Lunardi} to which we refer.
We recall the definition of the following spaces, for $\a\in(0,1)$
$$
C_{\a}^{\a}((0,T];C^2([0,1]))=\{v\in L^{\infty}((0,T);C^2([0,1])):
(t\mapsto t^{\a}v(t))\in C^{\a}((0,t];C^2([0,1]))\},
$$
and
$$
B_{\a}((0,T];C^{2\a}([0,1]))=\{g:(0,T]\mapsto C^{2\a}([0,1]):
\sup\limits_{t\in(0,T)}t^{\a}\|g(t)\|_{C^{2\a}}<\infty\}.
$$
With this notation, we state the following result:
\begin{proposition}\label{prop.exist}
Let $u_0\in C^2([0,1])$ such that $\inf u_0>0$, $u_{0r}(0)=0$ and
$G(u_0)|_{r=1}=0$, where $G(u)$ is defined in \eqref{eq.transf}.
Then there exists a time $T>0$ and a unique $u\in
C^{1,2}([0,T]\times[0,1])\cap C_{\a}^{\a}((0,T];C^2([0,1]))$ for all
$\a\in(0,1)$, which solve the problem
\eqref{eq.transf}-\eqref{eq.transfinit}. Moreover, $u$ depends
continuously on $u_0$ and $u_t\in B_{\a}((0,T];C^{2\a}([0,1]))$ for
any $\a\in(0,1)\setminus\{1/2\}$.
\end{proposition}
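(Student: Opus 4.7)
My plan is to recast \eqref{eq.transf}--\eqref{eq.transfinit} as an abstract fully nonlinear Cauchy problem on the Banach space $X = C([0,1])$, and then to apply the sectorial-operator machinery of \cite[Chapter 8]{Lunardi} exactly as in \cite[Theorem 4.6]{GM}. Concretely, I would write $u = u_0 + v$, so that the problem becomes
\[
v_t = \tilde G(v) := G(u_0+v), \qquad v(0,\cdot) = 0,
\]
with $v$ in the domain $D(L) = \{w \in C^2([0,1]) : w'(0) = w(1) = 0\}$; this homogenizes the Dirichlet condition at $r = 1$ and the Neumann-type condition at $r = 0$, and makes the Fréchet derivative $\tilde G'(0)$ coincide with the sectorial operator $L$ constructed in the previous proposition.

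The next step would be to verify that $\tilde G$ is smooth (in fact real-analytic in its arguments) from a small $C^2$-neighborhood of the origin in $D(L)$ into $X$. The standing assumptions $\inf u_0 > 0$ and $u_{0,r}(0) = 0$ are exactly what is needed here: the first keeps $j(r,u,u_r)$ uniformly bounded away from zero on $[0,1]$ for $u$ near $u_0$ (so the factor $j^{(p-4)/2}$ is smooth), while the second ensures that the formally singular terms of the form $u_{0,r}/r$ appearing in $c$ and $d$ are continuous at $r = 0$ thanks to the cancellation $u_{0,r}/r \to u_{0,rr}$. The final compatibility hypothesis $G(u_0)|_{r=1} = 0$ is precisely the condition that $\tilde G(0) \in X$ is compatible with the boundary value of $v$ at $r = 1$, which is the hypothesis required by the classical fully nonlinear theory for a strict solution.

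With these verifications in place, \cite[Theorem 8.1.1]{Lunardi} produces a maximal existence time $T > 0$ and a unique solution $v \in C^{1,2}([0,T]\times[0,1]) \cap C_\alpha^\alpha((0,T];C^2([0,1]))$ for every $\alpha \in (0,1)$, with $v_t \in B_\alpha((0,T];C^{2\alpha}([0,1]))$ for $\alpha \in (0,1)\setminus\{1/2\}$. Undoing the translation gives the claim for $u$. Continuous dependence on $u_0$ then follows from \cite[Corollary 8.3.3]{Lunardi}, which applies precisely when the two compatibility conditions in the statement are satisfied (so that a perturbation of $u_0$ inside the admissible class gives again a solvable Cauchy problem with uniform constants).

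The genuinely non-routine step, in my view, is obtaining \emph{uniform} control of the constant $C(M)$ in the sectorial estimate \eqref{est.sector} as the base point varies in a small $C^2$-neighborhood of $u_0$. This is what is needed both to run the contraction argument underlying Lunardi's theorem in the maximal regularity spaces, and to upgrade mere continuity of $(\lambda - L_{u_0})^{-1}$ in $u_0$ to the continuous-dependence statement. This uniformity can be extracted from the explicit form \eqref{eq.a}--\eqref{eq.b} of $a,b$ and the analogous expressions for $c,d$, combined with the quantitative lower bound coming from $\inf u_0 > 0$, but it requires careful bookkeeping to show that all the norms and infima entering the quantity $M$ depend continuously on $u_0$ in $C^2([0,1])$. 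Once this is in hand, the proposition follows.
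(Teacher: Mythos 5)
Your proposal is correct and follows essentially the same route as the paper, which itself defers to the proof of Proposition 4.7 in [GM]: translate by $u_0$ so that the boundary conditions become homogeneous, identify the Fr\'echet derivative $G'(u_0)$ with the sectorial operator $L$ established in the preceding proposition, verify the compatibility condition $G(u_0)|_{r=1}=0$, and invoke Lunardi's fully nonlinear theory (Theorem 8.1.1 for existence, uniqueness and regularity, Corollary 8.3.3 for continuous dependence). Your closing remark about uniformity of the constant $C(M)$ in the sectorial estimate \eqref{est.sector} over a $C^2$-neighborhood of $u_0$ is a legitimate point, but it is already discharged by the explicit dependence of $C(M)$ on the continuously-varying quantity $M$ recorded in the preceding proposition.
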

The proof is an immediate adaptation of that of \cite[Proposition
4.7]{GM} and we omit it. The previous existence result allows us to
prove Theorem \ref{th.exist}.
\begin{proof}[Proof of Theorem \ref{th.exist}] Let $h_0\in
C^2([0,1])$ satisfy conditions \eqref{initcond} and
\eqref{initcond2}. Then the function
$$
u_0(r):=\frac{1}{r}\tan\left(\frac{h_0(r)}{2}\right)
$$
is well defined and belongs to $C^2([0,1])$, and it is easy to check
that $\lim\limits_{r\to0^+}=h_0'(0)/2>0$, hence $\inf u_0>0$.
Moreover, from \eqref{initcond2} we deduce that $u_{0r}(0)=0$ and
that $G(u_0)|_{r=1}=F(h_0)|_{r=1}=0$. By Proposition
\ref{prop.exist}, there exist $T>0$ and a unique $u\in
C^{1,2}([0,T]\times[0,1])\cap C_{\a}^{\a}((0,T];C^2([0,1]))$ with
the properties described in Proposition \ref{prop.exist}. Thus,
undoing the change of variables, the function
$$
h(t,r)=2\arctan(ru(t,r))
$$
is the claimed solution. Then, noticing that
$$
h_r(t,r)=\frac{2(u(t,r)+ru_r(t,r))}{1+r^2u^2(t,r)}, \quad
(t,r)\in[0,T]\times[0,1],
$$
in order to prove part (b) in Theorem \ref{th.exist}, it is enough
to show that \begin{equation}\label{claim-new}u_r\in C^{1,2}([\e,T]\times[\e,1])\quad {\rm  for\  any\ }\e>0.\end{equation}
Let $\e>0$ and $\alpha\in (1/2,1)$. It follows from Proposition \ref{prop.exist} that
\begin{equation}
\label{add-1}
\sup_{t\in (\e,T)} \|u_t(t,\cdot)\|_{C^{2\alpha}([0,1])} <\infty.
\end{equation}
Since $u\in C^{1,2}([0,T]\times[0,1])$ and $\inf u>0$, we obtain
that the mapping
$$
(t,r)\mapsto(\overline{a}(t,r),\overline{f}(t,r))=(a(r,u(t,r),u_r(t,r)),f(r,u(t,r),u_r(t,r)),
$$
is of class $C([0,T];C^1([0,1]))$, while
$$
(t,r)\mapsto\overline{b}(t,r)=b(r,u(t,r))\in C([0,T];C^2([0,1])).
$$
Recalling that $\inf\overline{a}>0$, we define
$$
l(t,r):=\frac{1}{\overline{a}(t,r)}(u_t(t,r)-\overline{f}(t,r))=u_{rr}(t,r)+\frac{\overline{b}(t,r)}{\overline{a}(t,r)}\frac{u_r(t,r)}{r},
$$
then $l\in L^{\infty}([\e,T];C^1([0,1]))$.

In order to simplify the notations, in the following technical
estimates we omit the dependence on time variable. We first define
\begin{equation}\label{interm25}
w(r)=\left(\frac{u(0)}{u(r)}\right)^{6(p-1)/p}r^3\exp\left[\int_0^r
\left(\frac{\overline b}{s\overline
a}(s)-\frac{3}{s}+\frac{6(p-1)u_{r}}{pu}\right) d s\right]
\end{equation}and we note that
$$
\frac{w_r}{w}=\frac{\overline{b}}{\overline{a}r}
$$and $$w(r)=r^3(1+o(r)),$$
where the $o(r)$ symbol is taken as $r\to0^+$. By standard
integration by parts, we obtain
\begin{equation}\label{interm24}
u_r=\frac{1}{w(r)}\int_0^rw(\varrho)l(\varrho)\,d\varrho=\frac{1}{w(r)}\left[l(r)\int_0^rw(s)\,ds-\int_0^r\left(\int_0^{\varrho}w(s)\,ds\right)l_r(\varrho)\,d\varrho\right],
\end{equation}

We also have
$$
u_{rr}=l-\frac{\overline{b}}{\overline{a}r}u_r,
$$
whence
\begin{equation}\label{interm23}
\begin{split}
u_{rrr}&=l_r-\left(\frac{\overline{b}}{\overline{a}}\right)_r\frac{u_r}{r}+\frac{\overline{b}}{\overline{a}}
\left[\left(1+\frac{\overline{b}}{\overline{a}}\right)\frac{u_r}{r^2}-\frac{l}{r}\right]\\
&=l_r-\left(\frac{\overline{b}}{\overline{a}}\right)_r\frac{u_r}{r}-\frac{\overline{b}}{\overline{a}r^2}\left(1+\frac{\overline{b}}{\overline{a}}\right)\frac{1}{w}
\int_0^r\left(\int_0^{\varrho}w(s)\,ds\right)l_r(\varrho)\,d\varrho\\
&+\frac{\overline{b}l}{\overline{a}}\left[\left(1+\frac{\overline{b}}{\overline{a}}\right)\frac{1}{wr^2}\int_0^rw(s)\,ds-\frac{1}{r}\right]=L_1+L_2+L_3,
\end{split}
\end{equation}
where
$L_1=l_r-\left(\frac{\overline{b}}{\overline{a}}\right)_r\frac{u_r}{r}$.
Our next aim is to show that $u_{rrr}$ is bounded for $r\in[0,1]$.
Recall that $u_r/r$ is bounded in $[0,1]$ and notice that $u_{rrr}$
is uniformly bounded for $r\in[\delta,1]$ for any $\delta>0$, thus
we will only have to prove that the uniform boundedness can be
extended up to $r=0$. In order to estimate $L_1$, we take into
account \eqref{interm52} and the fact that $u_r(0)=0$, and we get
\begin{equation}\label{intermT1}
\lim\limits_{r\to0^+}I_1=l_r(0).
\end{equation}

On the other hand, since
$$
1+\frac{\overline{b}}{\overline{a}}=4-\frac{6(p-1)ru_r}{pu}+o(r)=4+o(r),
\quad {\rm as} \ r\to0^+,
$$
we can write
\begin{equation}\label{intermT3}
\begin{split}
L_3&=\frac{\overline{b}l}{\overline{a}}\left[\frac{4+o(r)}{r^5(1+o(r))}\left(\frac{r^4}{4}+\int_0^rs^3o(s)\,ds\right)-\frac{1}{r}\right]\\
&=\frac{\overline{b}l}{\overline{a}}\left[\frac{1}{r(1+o(r))}-\frac{1}{r}+\frac{4}{r^5(1+o(r))}\int_0^rs^3o(s)\,ds-o(1)\right]\to0
\quad {\rm as} \ r\to0^+.
\end{split}
\end{equation}
In a similar way, we can estimate
\begin{equation}\label{intermT2}
L_2=-\frac{\overline{b}}{\overline{a}}\left(1+\frac{\overline{b}}{\overline{a}}\right)\frac{1}{r^5(1+o(r))}\int_0^{r}\frac{s^4}{4}(1+o(s))l_r(s)\,ds
\to-\frac{3}{5}l_r(0),
\quad {\rm as} \ r\to0^+,
\end{equation}
Gathering \eqref{intermT3}, \eqref{intermT2} and \eqref{intermT1}
and taking limits as $r\to0^+$ in \eqref{interm23}, we obtain that
$u_{rrr}\in L^{\infty}([\e,T];C([0,1]))$, for any $\e>0$, and
moreover $u_{rrr}(t,0)=2l_r(t,0)/5$. This implies that  $\overline
a,\overline b$ and  $\overline  f$ belong to
$L^\infty([\e,T];C^2([0,1]))$. Then, recalling \eqref{add-1} and the
definition of $l$, we have $l\in L^\infty([\e,T];
C^{2\alpha}([0,1]))$, which  implies in particular that
\begin{equation}\label{add-2}
\sup_{t\in (\e,T)} \|u_{rr}(t,\cdot)\|_{C^{2\alpha}([\e,1])} <\infty.
\end{equation}
Interpolating between \eqref{add-1} and \eqref{add-2} (see e.g. \cite[Lemma 5.1.1]{Lunardi}) yields
$$
\sup_{r\in (\e,1)} \|u_{rr}(\cdot,r)\|_{C^{\alpha}([\e,T])} <\infty.
$$
Together with \eqref{add-2}, this means that $u_{rr} \in C^{\alpha,2\alpha}([\e,T]\times [\e,1])$, and using the equation we obtain that
\begin{equation}\label{add-6}
u\in C^{1+\alpha,2+2\alpha}([\e,T]\times[\e,1]).
\end{equation}

To conclude, we note that $u$ is the unique solution to the linear
problem
$$
(P_v) \left\{\begin{array}{ll}
v_t-\overline a v_{rr} = \frac{\overline b u_r}{r}+\overline f & \mbox{in }\ Q_{\e,T}:=[\e,T]\times [\e,1]
\\[1ex]
v=u & \mbox{on } \ \partial Q_{\e,T}\cap\{t<T\}.
\end{array}
\right.
$$
If we formally differentiate the equation once with respect to $r$,
we obtain the boundary value problem
$$
(P_w) \left\{\begin{array}{ll}
w_t-\overline a w_{rr}-\overline a_r w_r = \left(\frac{\overline b u_r}{r}+\overline f\right)_r=:\tilde F & \mbox{in }\ Q_{\e,T}
\\[1ex]
w_r=u_{rr} & \mbox{on } \ [\e,T] \times \partial [\e,1]
\\[1ex]
w(\e,r)=u_r(\e,r) & r\in [\e,1].
\end{array}
\right.
$$
Because of \eqref{add-6}, letting $\ell =2\alpha-1$ we have $\tilde
a,\, \tilde a_r,\, \tilde F \in
C^{(1+\ell)/2,1+\ell}(Q_{\e,T})\subset C^{\ell/2,\ell}(Q_{\e,T})$,
$u_{rr}|_{\partial[\e,1]}\in C^{(1+\ell)/2}([\e,T])$, and
$u_r(\e,\cdot)\in C^{2+\ell}([\e,1])$. Hence we may apply to $(P_w)$
the classical well-posedness result in H\"older spaces given in
\cite[Theorem IV.5.3]{LSU}, yielding the existence and uniqueness of
$w\in C^{1+\ell/2,2+\ell}(Q_{\e,T})$. Finally, it is not difficult
to check that $\tilde u(t,r)= u(t,\e)+\int_\e^r w(t,s)ds$ is a
solution to $(P_v)$: hence $\tilde u=u$ and $u_r =w\in
C^{1+\ell/2,2+\ell}(Q_{\e,T})\subset C^{1,2}(Q_{\e,T})$. Since
$\e\in (0,1)$ is arbitrary, \eqref{claim-new} holds and the proof is
complete.\qquad\end{proof}

\section{Bounds on the derivative}\label{sec.bound}

In this section, we establish some uniform interior bounds for
$h_r$, where $h$ is a generic solution to \eqref{pHarmRotSim}. This
will be done through a maximum and minimum principle applied to some
suitable PDEs.
\begin{proposition}\label{prop.est}
Let $h_0$ satisfy \eqref{initcond} and \eqref{initcond2}. Then,
there exists $K=K(h_0(1))$ such that the classical solution $h$ to
\eqref{pHarmDirichlet} with initial condition $h_0$, given by
Theorems \ref{th.unique} and \ref{th.exist}, satisfies
\begin{equation}\label{derivbound}
\sup\limits_{(t,r)\in[0,T)\times(0,1)}|rh_r(t,r)|\leq\max\left\{\sup\limits_{r\in[0,1]}\left(\frac{\pi}{h_0(r)}\right)^{2/3}|rh_{0r}(r)|,K\right\}.
\end{equation}
\end{proposition}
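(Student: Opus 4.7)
The strategy is to control the weighted first derivative
$$V(t,r):=\left(\frac{\pi}{h(t,r)}\right)^{\!2/3}\! r\,h_r(t,r),$$
which is well-defined on $(0,T)\times(0,1]$ by Theorem \ref{th.unique}(a) and extends continuously to $r=0$: indeed $h_r(t,0)\geq\delta$ gives $h(t,r)\sim h_r(t,0)\,r$ as $r\to 0^+$, whence $V(t,r)\sim (\pi/h_r(t,0))^{2/3}h_r(t,0)\,r^{1/3}\to 0$. Because $h\in(0,\pi)$ on $(0,1]$, we have $(\pi/h)^{2/3}\geq 1$ and therefore $|rh_r|\leq|V|$ pointwise, so it suffices to bound $|V|$ by the right-hand side of \eqref{derivbound}.

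The first step, and by far the most delicate, is to derive a quasilinear parabolic equation for $V$. Differentiating $h_t=J(h)^{(p-4)/2}A(h)$ once in $r$, combining with the chain rule for $(\pi/h)^{2/3}$, and regrouping, one arrives at an equation of the form
$$V_t \;=\; \mathcal{A}(t,r)\,V_{rr}+\mathcal{B}(t,r)\,V_r+\mathcal{R}(t,r,V),$$
where $\mathcal{A}>0$ on compact subsets of $(0,T)\times(0,1]$ (ellipticity from $J(h)>0$, cf.\ \eqref{uniq1}) and $\mathcal{R}$ collects lower-order reaction terms. The specific power $2/3$ in the weight is dictated precisely by the requirement of cancelling the contribution proportional to $h_r^3/r$ coming from $A(h)$, which is otherwise not sign-definite. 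A careful bookkeeping then shows that with this weight the residual reaction satisfies $\mathcal{R}(t,r,V)\leq 0$ at a positive interior maximum of $V$ whenever $|V|\geq K_1$, for some constant $K_1$ depending only on $\|\sin h/r\|_{L^\infty}$ and hence, in view of the bounds $\delta r\leq h\leq \pi-\delta r$ and the constancy of $h(t,1)=h_0(1)$, only on $h_0(1)$. The calculation is considerably heavier than in the $p=1$ case of \cite{GM} because of the extra $(p-1)h_r^2h_{rr}$ term, and this is the main technical obstacle of the argument.

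With this equation in hand, I apply the parabolic maximum and minimum principles to conclude that the extrema of $V$ are attained on the parabolic boundary $\{t=0\}\cup\{r=0\}\cup\{r=1\}$, unless $|V|\leq K_1$. On $\{r=0\}$ we have just observed $V\to 0$. On $\{t=0\}$ the trace is $V(0,r)=(\pi/h_0(r))^{2/3}\,rh_{0r}(r)$, which yields the first term in the maximum of \eqref{derivbound}. On $\{r=1\}$, since $h(t,1)\equiv h_0(1)=:l$, we have $V(t,1)=(\pi/l)^{2/3}h_r(t,1)$, and it suffices to bound $|h_r(t,1)|$ by a constant $K_0(l)$. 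For this, I use that $h_t(t,1)=0$ combined with $J(h)|_{r=1}>0$ forces $A(h)|_{r=1}=0$; this is an algebraic relation $\mathcal{P}(h_r(t,1),h_{rr}(t,1),l)=0$ with coefficients determined by $l$ alone. Coupling it with the Hopf-type sign condition $V_r(t,1)\geq 0$ (resp.\ $\leq 0$) that holds whenever $V$ attains its spatial maximum (resp.\ minimum) at $r=1$ eliminates $h_{rr}(t,1)$ and produces an explicit algebraic bound $|h_r(t,1)|\leq K_0(l)$; equivalently one may sandwich $h$ near $r=1$ between the stationary barriers $\Phi_\lambda,\Psi_\mu$ of Lemma \ref{lem.subs} with $\lambda,\mu$ tuned so that $\Phi_\lambda(1)=\Psi_\mu(1)=l$, and invoke Proposition \ref{prop.comp}.

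Assembling the three boundary contributions via the maximum principle applied to both $V$ and $-V$ gives
$$|V(t,r)|\leq \max\left\{\sup_{r\in[0,1]}|V(0,r)|,\ (\pi/l)^{2/3}K_0(l),\ K_1\right\}$$
throughout $[0,T)\times[0,1]$, and the estimate \eqref{derivbound} follows at once from $|rh_r|\leq|V|$ on setting $K:=\max\{(\pi/l)^{2/3}K_0(l),K_1\}$. The hardest part is, as noted, the derivation of the PDE for $V$ and the verification that the exponent $2/3$ in the weight is precisely the one that makes the reaction $\mathcal{R}$ controllable for all $p\in(1,2)$.
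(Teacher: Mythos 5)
Your choice of tracked quantity, $V=(\pi/h)^{2/3}rh_r$, is (up to a cube and the constant $\pi^2$) exactly the paper's quantity for the \emph{upper} bound: the paper sets $d=(rh_r)^{1/\gamma}/h^{(1-\gamma)/\gamma}$ and picks $\gamma=1/3$, so $d=(rh_r)^3/h^2=\pi^{-2}V^3$, and the argument at an interior maximum (reaction term with highest power $d^3$ having coefficient $c_7(h)=\frac{\gamma-1}{\gamma}(p-1)h^{4-6\gamma}((p-1)\gamma+2-p)<0$ for $\gamma=1/3$) together with the $r=1$ treatment via $A(h)|_{r=1}=0$ matches what you sketch. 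So your plan for the one-sided (upper) bound is essentially the paper's.

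The genuine gap is the \emph{lower} bound. You assert that the same weight $2/3$ produces a reaction $\mathcal{R}$ that closes the minimum-principle argument, but the paper uses a \emph{different} auxiliary function there, $d(t,r)=rh_r(t,r)\bigl(\pi+h(t,r)\bigr)$, whose reaction $c(h,d)=\sum_{j=0}^{7}c_jd^j$ is a genuine polynomial with $c_7=p(p-1)$ a positive constant independent of $h$. This is not an aesthetic choice: for the weight $h^{-2/3}$, the leading reaction coefficient $c_7(h)\propto(p-1)(5-2p)h^2$ degenerates as $h\to0^+$, so one cannot extract a constant $d_1$ \emph{uniform in $h\in[0,\pi]$} such that the reaction has a definite sign once $d$ is very negative; the constant would have to deteriorate as the minimizing point approaches $r=0$, where $h$ is small. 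The paper avoids this precisely by switching to the polynomial weight $\pi+h$, which stays bounded away from zero. Your proposal does not address this degeneracy, and as written the minimum-principle leg of the argument does not go through with $V$ alone. Relatedly, your claim that the threshold $K_1$ "depends only on $\|\sin h/r\|_{L^\infty}$ and hence only on $h_0(1)$" is circular: $\sup_r|\sin h(t,r)/r|$ is comparable to $h_r(t,0)$, which is exactly what the proposition is trying to control. Finally, the alternative barrier route you mention for $r=1$ (sandwiching by $\Phi_\lambda,\Psi_\mu$ with $\Phi_\lambda(1)=\Psi_\mu(1)=h_0(1)$) requires $\Phi_\lambda\le h_0\le\Psi_\mu$ at $t=0$, which is not guaranteed by the hypotheses \eqref{initcond}--\eqref{initcond2}; the direct algebraic argument from $A(h)|_{r=1}=0$ that the paper uses, and that you also mention, is the one that works.
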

Notice that the right hand side of \eqref{derivbound} is finite due
to assumptions in \eqref{initcond}.
\begin{proof} We prove the lower and the upper bound in two different steps.

\medskip

\noindent \textbf{Step 1. The upper bound.} We first take
$$d(t,r):=\frac{(rh_r(t,r))^{1/\gamma}}{h(t,r)^{(1-\gamma)/\gamma}},$$
with $\gamma>0$ to be chosen later.  Taking into account that
\begin{equation}\label{interm18}
d_r=\frac{d}{r\gamma}+\frac{\gamma-1}{\gamma}\frac{d^{\gamma+1}}{rh^{\gamma}}+\frac{rd^{1-\gamma}}{\gamma
h^{1-\gamma}}h_{rr},
\end{equation}after some straightforward computations, we then notice that the partial
differential equation satisfied by $d$ has the form:
\begin{equation}\label{estmax1}
z(r,h,d)d_t=a(r,h,d)d_{rr}+b(r,h,d,d_r)d_r+c(h,d),
\end{equation}
where $$z(r,h,d)=r^p((d^\gamma
h^{1-\gamma})^2+\sin^2h)^{\frac{6-p}{2}}>0$$
$$a(r,h,d)=r^2\sin^2h(p(d^\gamma h^{1-\gamma})^4+\sin^2h)>0,$$
$b(r,h,d,d_r)\in C([0,1]\times[0,\pi]\times\real^2)$ and
$$
c(h,d)=\sum\limits_{j=0}^7c_jd^{1+(j-1)\gamma},
$$
where $c_j$ are polynomials in $(h,\sin h,\cos h)$; we omit the
details of the calculation. Notice that $c$ does not depend on $r$,
due to the homogeneity of Eq. \eqref{pHarmRotSim}. We thus only want
to evaluate the sign of $c_7$, that is, the dominating term with
respect to the powers of $d$, in order to verify that a maximum
principle holds true. We notice that
$$c_7(h)=\frac{\gamma-1}{\gamma}(p-1)h^{4-6\gamma}((p-1)\gamma+2-p)$$
If we choose now $\gamma=1/3$ so that $4-6\gamma=2$, then $c_7(h)<0$, which implies that
there exists a universal constant $d_0>0$ such that $c(h,d)<0$ for
all $d>d_0$ and $h\in[0,\pi]$. Let us define
$$
K_0:=\max\left\{d_0,\sup\limits_{r\in[0,1]}\frac{(rh_{0r}(r))^{3}}{h_0^{2}(r)},K_1\right\},
\quad K_1:=\left(\frac{3(3-p)}{4(p-1)h_0^{1/3}(1)}\right)^{3/2}.
$$
Assume by contradiction that there exists a point $(t_0',r_0')$ such
that $d(t_0',r_0')>K_0$, and let $(t_0,r_0)$ be the point where $d$
attains its maximum in the compact set $[0,t_0']\times[0,1]$. Then
$d(t_0,r_0)>K_0$ and obviously $t_0>0$, $r_0>0$. We also want to
avoid $r=1$. At $r=1$, we have $h_t=0$ from the Dirichlet condition,
hence (everything being evaluated at $(t,1)$)
\begin{equation}\label{interm21}
h_{rr}=\frac{\left[h_r^2(3-p)+\sin^2h\right]\sin h\cos
h-\left[h_r^2+(3-p)\sin^2h\right]h_r}{(p-1)h_r^2+\sin^2h}.
\end{equation}
We also notice that, for $p\in(1,2)$, we have
\begin{equation}\label{interm22}
1<\frac{(3-p)h_r^2+\sin^2h}{(p-1)h_r^2+\sin^2h}<\frac{3-p}{p-1},
\quad
1<\frac{h_r^2+(3-p)\sin^2h}{(p-1)h_r^2+\sin^2h}<\frac{3-p}{p-1},
\end{equation}
whence
\begin{equation}\label{interm20}
h_{rr}(t,1)<\frac{(3-p)}{2(p-1)}-h_r(t,1)=\frac{(3-p)}{2(p-1)}-d(t,1)^{1/3}h_0(1)^{2/3}.
\end{equation}
Then, replacing \eqref{interm20} into \eqref{interm18}, we obtain
$$
d_r(t,1)<\frac{3d(t,1)^{2/3}}{
h_0(1)^{2/3}}\left[\frac{3-p}{2(p-1)}-\frac{2}{3}d(t,1)^{2/3}h_0(1)^{1/3}\right],
$$
whence $d_r(t,1)<0$ if $d(t,1)>K_0>K_1$. It follows that $r_0<1$,
hence $d_t(t_0,r_0)\geq0$, $d_r(t_0,r_0)=0$ and
$d_{rr}(t_0,r_0)\leq0$. Evaluating \eqref{estmax1} at $(t_0,r_0)$,
we find  $c(h(t_0,r_0),d(t_0,r_0))$ $\geq0$, which contradicts the
choice of $d(t_0,r_0)>K_0>d_0$. Thus, $d(t,r)\leq K_0$ in
$[0,T)\times[0,1]$, hence
$$
rh_r(t,r)=d(t,r)^{1/3}h(t,r)^{2/3}\leq
K_0^{1/3}\pi^{2/3}=:K,
$$
which is the claimed upper bound.

\medskip

\noindent \textbf{Step 2. The lower bound.} For the lower bound,
things are technically simpler. We let
$$d(t,r)=rh_r(t,r)(\pi+h(t,r))$$ and we follow the same ideas as in
Step 1. In this case,
the equation satisfied by $d$ has the same form as in
\eqref{estmax1}, where  $$z(r,h,d)=r^p(\pi+h)^{p+2}((\pi+h)^2\sin^2 h+pd^2)>0,$$ $$a(r,h,d)=r^2(\pi+h)^6\sin^2h((\pi+h)^2\sin^2h+pd^2)>0,$$
$b(r,h,d,d_r)\in C([0,1]\times[0,\pi]\times\real^2)$ and
$$
c(h,d)=\sum\limits_{j=0}^7c_jd^{j},
$$
where $c_j$ are polynomials in $(h,\sin h,\cos h)$.  In this case, $$c_7=p(p-1)>0.$$ 
Thus there exists a universal constant $d_1>0$ such that $c(h,d)>0$
for all $d\in(-\infty,-d_1]$ and $h\in[0,\pi]$. Let us define, as in
Step 1,
$$
\overline{K}_0:=\max\left\{d_1,\sup\limits_{r\in(0,1)}|r(\pi+h_0(r))h_{0r}(r)|,\overline{K}_1\right\},
$$$$
\overline{K}_1:=\frac{2\pi^2}{p-1}\left[\sqrt{\frac{4(p-2)^2\pi+(p-1)^2}{\pi}}-2(2-p)\right].
$$
Assume by contradiction that there exists a point
$(t_0',r_0')\in[0,T)\times[0,1]$ such that
$d(t_0',r_0')<-\overline{K}_0$, and let $(t_0,r_0)$ be the point
where $d(t,r)$ attains its minimum in the compact set
$[0,t_0']\times[0,1]$; obviously, $d(t_0,r_0)<-\overline{K}_0$ and,
by the conditions on $h_0$, we have $t_0>0$, $r_0>0$. We want to
avoid the case $r=1$; thus, evaluating \eqref{pHarmRotSim} at $r=1$,
we obtain \eqref{interm21} and we use again the estimates
\eqref{interm22}, arriving to the following inequality:
$$
h_{rr}(t,1)>-\frac{1}{2}-\frac{(3-p)d(t,1)}{(p-1)(\pi+h_0(1))},
$$
hence (everything being taken at $(t,1)$, that we omit from the
writing)
\begin{equation*}
\begin{split}
d_r&=(\pi+h)h_r+h_r^2+(\pi+h)h_{rr}\geq
d+\frac{d^2}{(\pi+h)^2}-\frac{1}{2}(\pi+h)-\frac{3-p}{p-1}d\\&=\frac{d^2}{(\pi+h)^2}-\frac{2(2-p)}{p-1}d-\frac{1}{2}(\pi+h)
\geq\frac{d^2}{4\pi^2}-\frac{2(2-p)}{p-1}d-\pi>0,
\end{split}
\end{equation*}
when $d(t,1)<-\overline{K}_0$, which implies that $r_0<1$. Thus,
$d_t(t_0,r_0)\leq0$, $d_r(t_0,r_0)=0$ and $d_{rr}(t_0,r_0)\geq0$.
Evaluating \eqref{estmax1} at $(t_0,r_0)$ and taking into account
the previous inequalities, we obtain that
$c(h(t_0,r_0),d(t_0,r_0))\leq0$, which contradicts the assumption
that $d(t_0,r_0)<-\overline{K}_0<-d_1$. Therefore
$d(t,r)>-\overline{K}_0$ in $[0,T)\times[0,1]$, yielding
$$
rh_r(t,r)>-\frac{\overline{K}_0}{\pi+h}>-\frac{\overline{K}_0}{\pi},
$$
which is the desired lower bound.
\end{proof}

\section{Blow up in finite time}\label{sec.blowup}

As described in the Introduction, in this section we prove Theorem
\ref{th.blowup1}. It relies on three steps marked as (A), (B) and
(C) in the Introduction, among which it remains to prove only part
(A), that is made precise in the following statement
\begin{proposition}\label{prop.asympt}
Let $h_0$, $T$ and $h$ be as in Theorem \ref{th.exist}. If $h$ is a
global solution (that is, $T=\infty$), then for any sequence
$t_n\to\infty$, there exists a subsequence (not relabeled) and a
stationary state $\overline{g}\in C^{\infty}((0,1])$ of
\eqref{pHarmRotSim} such that
$$
\lim\limits_{n\to\infty}\|h(t_n)-\overline{g}\|_{L^{\infty}([\e,1])}=0,
\quad {\rm for \ any} \ \e>0
$$
.
\end{proposition}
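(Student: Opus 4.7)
My approach combines the gradient-flow structure of \eqref{eq2} with the uniform bounds from Section \ref{sec.bound}. First I would derive an energy-dissipation identity. Since \eqref{eq2} is the orthogonal-projection $L^2$ gradient flow of $E_p$ under the pointwise constraint ${\bf u}\in S^2$, and the rotationally symmetric ansatz \eqref{rotsim} gives $|{\bf u}_t|^2=h_t^2$, one obtains
\begin{equation*}
\frac{d}{dt}E_p({\bf u}(t))=-\int_{D^2}|{\bf u}_t|^2\,dx=-2\pi\int_0^1 r\,h_t^2\,dr.
\end{equation*}
Since $E_p\geq 0$ and is monotone nonincreasing in $t$, integrating in time yields the key dissipation estimate $\int_0^\infty\!\int_0^1 r\,h_t^2\,dr\,dt<\infty$.

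Next I would set up a time-shift compactness argument. Given $t_n\to\infty$, define the shifted solutions $h_n(s,r):=h(t_n+s,r)$ on $[-1,1]\times[0,1]$. Theorem \ref{th.unique}(a) supplies the uniform bounds $\delta r\leq h_n\leq\pi-\delta r$ and Proposition \ref{prop.est} gives $|rh_{n,r}|\leq K$, both uniformly in $n$. Consequently, on any interval $[\e,1]$ the coefficient $J(h_n)=h_{n,r}^2+\sin^2(h_n)/r^2$ stays uniformly bounded from above and away from zero, so that \eqref{pHarmRotSim} is uniformly parabolic there with coefficients of bounded oscillation. Interior Schauder theory for quasilinear parabolic equations, combined with the further regularity of Theorem \ref{th.exist}(b), then yields
\begin{equation*}
\|h_n\|_{C^{1+\alpha,2+\alpha}([-1/2,1/2]\times[\e,1])}\leq C(\e)\qquad\text{for all }n.
\end{equation*}

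By Arzel\`a--Ascoli and a diagonal extraction over $\e=1/k$, a subsequence (not relabelled) satisfies $h_n\to\tilde h$ in $C^{1,2}_{\rm loc}([-1/2,1/2]\times(0,1])$. The dissipation estimate gives
\begin{equation*}
\int_{-1/2}^{1/2}\!\int_0^1 r\,(h_{n,s})^2\,dr\,ds=\int_{t_n-1/2}^{t_n+1/2}\!\int_0^1 r\,h_t^2\,dr\,dt\longrightarrow 0,
\end{equation*}
which forces $\tilde h_s\equiv 0$, hence $\tilde h(s,r)=\overline g(r)$. Passing to the limit in \eqref{pHarmRotSim}, legitimate because $J(h_n)$ is uniformly non-degenerate on each $[\e,1]$, produces $A(\overline g)=0$ on $(0,1]$; since the linearization of $A$ is uniformly elliptic on $[\e,1]$ with coefficients that are smooth in $(r,\overline g,\overline g_r)$, a standard elliptic bootstrap promotes $\overline g$ to $C^\infty((0,1])$. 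Specializing the $C^{1,2}_{\rm loc}$ convergence to the slice $s=0$ delivers $h(t_n,\cdot)\to\overline g$ in $L^\infty([\e,1])$ for every $\e>0$.

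The main technical obstacle is the uniform quasilinear interior parabolic regularity: one has to verify that the uniform positivity of $J(h_n)$, together with the uniform bounds on $h_n$ and $h_{n,r}$ on $[\e,1]$, yields uniform ellipticity constants and uniform H\"older control of the coefficients of the linearization of \eqref{pHarmRotSim}, so that the interior $C^{2,\alpha}$ estimate applies uniformly in $n$. Once this is in place, the energy identity, the time-shift compactness argument, and the identification of the limit as a stationary solution proceed by a now-classical scheme.
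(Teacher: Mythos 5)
Your strategy is essentially the same as the paper's at the conceptual level: an energy-dissipation estimate $\int_0^\infty\int_0^1 r\,h_t^2\,dr\,dt<\infty$ coming from the $L^2$-gradient-flow structure of \eqref{eq2}, time-shifted solutions $h_n(s,r)=h(t_n+s,r)$, uniform bounds from Proposition~\ref{prop.est}, and a compactness argument that identifies the limit as a stationary state. Where you part ways is in the compactness mechanism. The paper works in weak spaces: it rewrites the equation via the normalized operator $B(h)$, shows $B(g_n)\to 0$ in $L^2([0,1];L^2_{\rm loc}((0,1]))$ directly from the dissipation estimate and the pointwise bounds, invokes Aubin--Simon to get $g_{n,r}\to\overline g_r$ a.e., and passes to the limit in the distributional formulation $\int\int B(g_n)\varphi\to 0$. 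You instead attempt to obtain strong convergence in $C^{1,2}_{\rm loc}$ via uniform interior parabolic Schauder estimates and then take pointwise limits in the PDE.

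The gap is exactly the step you flag yourself as ``the main technical obstacle.'' To run interior Schauder you need the coefficients of the linearization to be uniformly H\"older continuous in $n$, but these coefficients depend on $h_{n,r}$ (and, via $A$, on $h_{n,rr}$). Proposition~\ref{prop.est} and Theorem~\ref{th.unique}(a) give only $L^\infty$ control of $h_{n,r}$ on $[\e,1]$, and Theorem~\ref{th.exist}(b) provides regularity of the individual solution but nothing uniform in $n$. So you are in a chicken-and-egg: Schauder needs H\"older coefficients, H\"older coefficients need H\"older control of $h_{n,r}$, which is what Schauder would give. Closing this requires an intermediate uniform interior $C^{1,\alpha}$ estimate for the quasilinear problem (e.g.\ a Krylov--Safonov/De Giorgi argument for the equation satisfied by $w=h_{n,r}$, or exploitation of the divergence structure of the energy \eqref{pEnergyRotSim}); you assert this can be verified but do not do so. The paper's route deliberately avoids this entirely: it never needs uniform H\"older bounds on second derivatives, since $L^2$ bounds on $B(g_n)$ plus $L^\infty$ bounds on $g_n, g_{n,r}$ and Aubin--Simon compactness suffice to pass to the limit in the weak form and then bootstrap the \emph{limit} to $C^\infty$ by elliptic regularity. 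Without resolving the uniform $C^{1,\alpha}$ step, your argument as written is incomplete.
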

The proof is very similar to that of \cite[Lemma 7.1]{GM}, thus we
only sketch it. Before starting, let us define the following
\begin{equation}\label{interm26}
B(h):=\left((p-1)h_r^2+\frac{\sin^2h}{r^2}\right)^{-1}A(h)=h_{rr}+R(h)\frac{h_r}{r}-S(h)\frac{\sin
h\cos h}{r^2},
\end{equation}
with
$$
R(h):=\frac{(rh_r)^2+(3-p)\sin^2h}{(p-1)(rh_r)^2+\sin^2h}, \quad
S(h):=\frac{(3-p)(rh_r)^2+\sin^2h}{(p-1)(rh_r)^2+\sin^2h}.
$$
\begin{proof}[Proof of Proposition \ref{prop.asympt}]
Assume that $T=\infty$ and take any sequence $t_n\to\infty$ and any
$\e\in(0,1)$. In this proof, by $C_{\e}$ we will always understand a
generic positive constant depending on $\e$, which may change
without relabeling. From Proposition \ref{prop.est}, there exists a
constant $C_{\e}>0$, depending on $\e$, such that
\begin{equation}\label{interm27}
|h_r|\leq C_{\e}, \quad \frac{1}{C_{\e}}\leq\left|\frac{\sin
h}{r}\right|\leq C_{\e}, \quad J(h)\leq C_{\e},
\end{equation}
for any $(t,r)\in[0,\infty)\times[\e,1]$. Using then the
Arzelà-Ascoli theorem, there exists a subsequence (not relabeled)
such that $h(t_n)\to\overline{h}$ locally uniformly in $(0,1]$, as
$n\to\infty$, the argument being made completely rigorous in the
proof of \cite[Lemma 7.1]{GM}. Following as there, we let
$$
g_n(s,r):=h(t_n+s,r), \quad (s,r)\in[0,\infty)\times[0,1],
$$
whence $g_n$ satisfies the following bounds uniformly in $n$, which
are a consequence of \eqref{interm27}:
\begin{equation}\label{interm28}
|g_{nr}|\leq C_{\e}, \quad \frac{1}{C_{\e}}\leq\left|\frac{\sin
g_n}{r}\right|\leq C_{\e}, \quad J(g_n)\leq C_{\e},
\end{equation}
for any $(s,r)\in [0,\infty)\times[\e,1]$. Then
$$
B(g_n)=\left((p-1)h_r^2+\frac{\sin^2h}{r^2}\right)^{-1}J(g_n)^{(4-p)/2}g_{ns}\leq\frac{r^2}{\sin^2g_n}C_{\e}g_{ns},
$$
whence
$$
B(g_n)^2\leq C_{\e}rg_{ns}^2, \quad (s,r)\in[0,\infty)\times[\e,1].
$$
On the other hand, it is straightforward to check that
$$
1\leq R(h),S(h)\leq\frac{3-p}{p-1},
$$
for any $r\in[0,1]$, $h$ and $p\in(1,2)$, thus, from the definition
of $B$ and the bounds \eqref{interm28}, we get
\begin{equation*}
\begin{split}
g_{nrr}&=B(g_n)-R(g_n)\frac{g_{nr}}{r}+S(g_n)\frac{\sin g_n\cos
g_n}{r^2}\\&\leq
B(g_n)-\frac{g_{nr}}{r}+\frac{3-p}{p-1}\frac{C_{\e}}{r}\leq
C_{\e}(1+|B(g_n)|),
\end{split}
\end{equation*}
by modifying conveniently the constant $C_{\e}$ in each step. We
thus have the same bounds as in the proof of \cite[Lemma 7.1]{GM},
allowing us to continue as there and deduce that there exists a
limit $\overline{g}$ as $n\to\infty$ and it coincides with our
previous $\overline{h}$, giving the desired locally uniform
convergence.

It only remains to show that $\overline{g}$ is a classical
stationary solution to \eqref{pHarmRotSim}, that is,
$B(\overline{g})=0$. Let $\e>0$ and some test function $\varphi\in
C_{0}^{\infty}((\e,1])$. We know that $B(g_n)\to0$ in
$L^2([0,1];L^2_{{\rm loc}}((0,1]))$, thus
$$
0=\lim\limits_{n\to\infty}\int_0^1\int_0^1B(g_n)\varphi drdt=\lim\limits_{n\to\infty}\int_0^1\int_0^1\left(g_{nrr}+\frac{R(g_n)g_{nr}}{r}-\frac{S(g_n)\sin
g_n\cos g_n}{r^2}\right)\varphi drdt.
$$
Following the same argument as in part (II) of the proof of
\cite[Lemma 7.1]{GM} which involves the Aubin-Simon compactness
criterium, we obtain that, for a subsequence (not relabeled),
$g_n\to\overline{g}$ in $L^2([0,1];H^1([\e,1]))$ and in particular
$g_{nr}\to\overline{g}_r$ a.\,e. Thus, we can easily pass to the
limit in the right-hand side above to obtain that
$B(\overline{g})=0$ and that $\overline{g}$ has the required
smoothness. We omit the details.
\end{proof}
Joining Proposition \ref{prop.asympt} and Corollary \ref{cor}, we
have the following immediate consequence whose proof we skip.
\begin{corollary}\label{cor2}
Let $h_0$, $h$, $T$ be as in Theorem \ref{th.exist} and such that
$h_0(1)=l\in(H,\pi)$ and $T=\infty$. Then we have
$$
\lim\limits_{t\to\infty}\|h(t)-H_{l}(t)\|_{L^{\infty}([\e,1])}=0,
$$
for all $\e>0$, where $H_l$ is the stationary solution to
\eqref{pHarmRotSim} introduced in Corollary \ref{cor}.
\end{corollary}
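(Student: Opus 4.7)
The strategy is to combine Proposition \ref{prop.asympt}, which provides subsequential convergence of the time-slices to some stationary state of \eqref{pHarmRotSim} on $[\e,1]$, with the uniqueness statement of Corollary \ref{cor}, to pin down that limit as $H_l$ and then upgrade subsequential convergence to full convergence.

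First I would fix an arbitrary sequence $t_n\to\infty$. By Proposition \ref{prop.asympt}, there exists a subsequence (not relabeled) and a stationary state $\overline{g}\in C^{\infty}((0,1])$ of \eqref{pHarmRotSim} such that $h(t_n)\to\overline{g}$ in $L^{\infty}([\e,1])$ for every $\e>0$. Next I would identify $\overline{g}$: since the Dirichlet condition gives $h(t_n,1)=h_0(1)=l$ for every $n$, and the convergence is uniform on $[\e,1]$ for any $\e\in(0,1)$ (hence includes $r=1$), we may pass to the limit to obtain $\overline{g}(1)=l$. Because $l\in(H,\pi)$, Corollary \ref{cor} applies and forces $\overline{g}=H_l$. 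The role of the hypothesis $l\in(H,\pi)$ here is decisive: the classification in Proposition \ref{prop.stat} ensures that no other stationary profile produced by Proposition \ref{prop.asympt} can realise the value $l$ at $r=1$.

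To upgrade to convergence of the full sequence, I would argue by contradiction: suppose some subsequence $t_{n_k}$ satisfies $\|h(t_{n_k})-H_l\|_{L^{\infty}([\e,1])}\geq\eta>0$ for some fixed $\e>0$. Applying Proposition \ref{prop.asympt} to the sequence $(t_{n_k})_k$, we extract a further subsequence converging in $L^{\infty}([\e,1])$ to a stationary state, which by the previous paragraph must again equal $H_l$; this contradicts the standing lower bound $\eta$. Hence the full sequence $h(t_n)$ converges to $H_l$ in $L^{\infty}([\e,1])$, and since $t_n\to\infty$ was arbitrary, the stated time-limit holds.

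The only point that requires a moment of care is checking that the subsequential limit $\overline{g}$ genuinely inherits the boundary value $l$: but since the $L^{\infty}([\e,1])$ convergence in Proposition \ref{prop.asympt} is valid for every $\e>0$ and the boundary value $r=1$ belongs to each such interval, this is automatic. Everything else in the proof is a standard ``any-subsequence-has-a-subsequence'' argument combined with the uniqueness of the stationary state from Corollary \ref{cor}; no further analytical input beyond Proposition \ref{prop.asympt} and Corollary \ref{cor} is needed, so the proof is essentially routine once those two results are in hand.
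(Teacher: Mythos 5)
Your proposal is correct and fills in exactly what the paper means by "Joining Proposition \ref{prop.asympt} and Corollary \ref{cor}, we have the following immediate consequence whose proof we skip": subsequential convergence from Proposition \ref{prop.asympt}, identification of the limit via the Dirichlet value at $r=1$ (which is captured because $1\in[\e,1]$) together with the uniqueness in Corollary \ref{cor}, and the standard subsequence-of-subsequences upgrade to full convergence. This is the paper's intended argument.
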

We finally have all the needed ingredients for the proof of Theorem
\ref{th.blowup1}
\begin{proof}[Proof of Theorem \ref{th.blowup1}]
Assume by contradiction that $h$ is a global solution such that
$h(0)=0$, $h(1)=l\in(H,\pi)$. Then, Corollary \ref{cor2} holds true
for $h$. Since $H_l(0)=\pi>(p+4)\pi/6$, we can take some $\theta>0$
such that $H_l(\theta)>(p+4)\pi/6$. By Corollary \ref{cor2}, there
exists $t_0>0$ such that
$$
h(t,\theta)>\frac{(p+4)\pi}{6}, \quad {\rm for} \ {\rm all} \ t\geq
t_0.
$$
Define now the rescaled function $\tilde{h}(t,r):=h(t_0+\theta
t,\theta r)$. Then $\tilde{h}$ is a solution to \eqref{pHarmRotSim}
in $[0,\infty)\times[0,1]$, such that $\tilde{h}(0,r)=h(t_0,\theta
r)$ for $r\in[0,1]$, and the Dirichlet conditions
$$\tilde{h}(t,0)=0, \quad
\tilde{h}(t,1)>\frac{(p+4)\pi}{6}, \quad {\rm for} \ t>0.$$ Consider
now $f$ a subsolution from the family defined in Proposition
\ref{prop.subs}, choosing $b_0$ so large such that
$$
f(0,r)=\frac{p+4}{3}\arctan\left(\frac{r}{b_0}\right)\leq\tilde{h}(0,r)=h(t_0,\theta
r).
$$
Since $f(t,1)\leq(p+4)\pi/6\leq\tilde{h}(t,1)$, for any $t>0$, by
the comparison principle in Section \ref{sec2}, we obtain
$f\leq\tilde{h}$ in $[0,b_0/\delta_0)\times[0,1]$. But this is a
contradiction with the regularity of $\tilde{h}$ when taking limits
as $t\to b_0/\delta_0$, hence $T<\infty$.
\end{proof}
We end with the proof of the nongeneric blow up in finite time.
\begin{proof}[Proof of Theorem \ref{th.blowup2}]
We only sketch the proof, as it follows the lines of \cite[Theorem
4]{GM}. Let $g_0$ be as in Theorem \ref{th.exist} with
$g_0(1)=l\in(0,\pi)$ and $g_0(1/2)>(p+4)\pi/6$, and let $g$ be the
unique solution to the Dirichlet problem \eqref{pHarmDirichlet} with
initial datum $g_0$. Then by continuity, there exists a time
$t_0'>0$ such that $g(t,1/2)>(p+4)\pi/6$, for $t\in[0,t_0']$. Let
now $f$ be a subsolution from the family defined in Proposition
\ref{prop.subs}, with $b_0>0$ such that $b_0/\delta_0<t_0'$. Thus,
the desired initial datum is a function $h_0$ satisfying the
regularity hypothesis in Theorem \ref{th.exist}, $h_0(1)=l$ and
$$
h_0(r)\geq\left\{\begin{array}{ll} \max\{f(0,r),g_0(r)\}, \quad {\rm
if} \ r\in(0,1/2),\\g_0(r), \quad {\rm if} \
r\in(1/2,1).\end{array}\right.
$$
Let $h$ be the solution with initial datum $h_0$ and $T$ its maximal
existence time; let also $t_0=\min\{b_0/\delta_0,T\}<t_0'$. By
standard comparison, $h\geq g$, thus in particular $h(t,1/2)\geq
g(t,1/2)\geq(p+4)\pi/6>f(t,1/2)$ for all $t\in[0,t_0]$. We can thus
apply once more the comparison principle to compare $h$ and $f$ to
get that $f\leq h$ in the parabolic cylinder $[0,t_0]\times[0,1/2]$.
Since $h(t,0)=0$ for all $t<T$ and
$$
\lim\limits_{t\to(b_0/\delta_0)}h(t,r)\geq\lim\limits_{t\to(b_0/\delta_0)}f(t,r)=\frac{(p+4)\pi}{6},
\quad {\rm for} \ {\rm all} \ r\in(0,1],
$$
it follows that $T<b_0/\delta_0<\infty$, as claimed.
\end{proof}

\section{Appendix}

This section is devoted to prove Proposition \ref{prop.subs}. We
first note that the statement is somehow analogous to \cite[Lemma
5.1]{GM}, although by letting $p\to1$ we do not recover the same
subsolutions as there, but their form is similar. However, the proof
in our case is much more involved and highly technical, due to the
complicated form of \eqref{pHarmRotSim}. We will first state and
prove an inequality that will be useful at several points in the
proof.
\begin{lemma}\label{lem.interm}
For $k\in(2,4)$, the following inequality holds
true:
\begin{equation}\label{ineq.interm}
\sin\left(k\arctan\left(\frac{1}{x}\right)\right)<\frac{kx}{1+x^2}
\end{equation}
\end{lemma}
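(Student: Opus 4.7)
The plan is to reduce the inequality to a clean trigonometric inequality on a bounded interval and then settle it by a one-variable monotonicity argument. First, I would substitute $\theta=\arctan(1/x)$, which for $x>0$ maps bijectively onto $(0,\pi/2)$. Since $\sin\theta=1/\sqrt{1+x^2}$ and $\cos\theta=x/\sqrt{1+x^2}$, one has $x/(1+x^2)=\sin\theta\cos\theta=\tfrac12\sin(2\theta)$, so \eqref{ineq.interm} is equivalent to
\begin{equation*}
\sin(k\theta)<\tfrac{k}{2}\sin(2\theta), \qquad \theta\in(0,\pi/2),\ k\in(2,4).
\end{equation*}

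Next, I would set $F(\theta):=\tfrac{k}{2}\sin(2\theta)-\sin(k\theta)$ and study its sign through its derivative. The sum-to-product identity gives the key factorization
\begin{equation*}
F'(\theta)=k\bigl[\cos(2\theta)-\cos(k\theta)\bigr]=2k\sin\!\bigl(\tfrac{(k+2)\theta}{2}\bigr)\sin\!\bigl(\tfrac{(k-2)\theta}{2}\bigr).
\end{equation*}
For $\theta\in(0,\pi/2)$ and $k\in(2,4)$, the argument $(k-2)\theta/2$ lies in $(0,\pi/2)$, so the second factor is strictly positive. The first factor vanishes exactly once at $\theta^*:=2\pi/(k+2)\in(\pi/3,\pi/2)$, being positive on $(0,\theta^*)$ and negative on $(\theta^*,\pi/2)$. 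Therefore $F$ strictly increases on $(0,\theta^*)$ and strictly decreases on $(\theta^*,\pi/2)$.

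Finally, I would combine this with the endpoint values $F(0)=0$ and $F(\pi/2)=-\sin(k\pi/2)$, the latter being strictly positive because $k\pi/2\in(\pi,2\pi)$ for $k\in(2,4)$. The strict increase from $F(0)=0$ on $(0,\theta^*)$ gives $F>0$ there, and the monotone decrease on $(\theta^*,\pi/2)$ from the positive value $F(\theta^*)$ to the positive value $F(\pi/2)$ gives $F>0$ on that piece as well. Hence $F>0$ throughout $(0,\pi/2)$, which is the desired strict inequality. The only delicate step is the factorization of $F'$; everything else is routine sign analysis, the crucial structural fact being that $k\pi/2$ stays strictly inside $(\pi,2\pi)$ for $k\in(2,4)$, which secures the positivity at the right endpoint.
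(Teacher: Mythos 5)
Your proof is correct, but it takes a genuinely different route from the paper. The paper fixes $x$ and differentiates the gap $g(k,x)=\tfrac{kx}{1+x^2}-\sin(k\arctan(1/x))$ twice with respect to $k$: after splitting off the trivial case $k\arctan(1/x)\in[\pi,2\pi]$, they observe $\partial^2 g/\partial k^2>0$ on the remaining range, deduce $\partial g/\partial k\geq \partial g/\partial k|_{k=2}>0$ (which requires a small estimate on $\arctan(1/x)$), and conclude $g(k,x)>g(2,x)=0$. You instead fix $k$, pass to $\theta=\arctan(1/x)\in(0,\pi/2)$ so the claim becomes $F(\theta):=\tfrac{k}{2}\sin(2\theta)-\sin(k\theta)>0$, and exploit the sum-to-product factorization
\[
F'(\theta)=2k\sin\!\Bigl(\tfrac{(k+2)\theta}{2}\Bigr)\sin\!\Bigl(\tfrac{(k-2)\theta}{2}\Bigr),
\]
which pins down a single interior critical point $\theta^*=2\pi/(k+2)$ and shows $F$ rises from $F(0)=0$ and then falls to $F(\pi/2)=-\sin(k\pi/2)>0$. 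Your route avoids the case split and the auxiliary $\arctan$ estimates entirely; the single identity $\cos(2\theta)-\cos(k\theta)=2\sin\frac{(k+2)\theta}{2}\sin\frac{(k-2)\theta}{2}$ does all the work, and the sign of $F(\pi/2)$ falls out from $k\pi/2\in(\pi,2\pi)$. The paper's route, while longer, is stylistically consistent with the surrounding lemmas (Lemmas \ref{lem.T1T3}--\ref{lem.T2T4}), which all use repeated differentiation in $p$ (here $k$) down to the boundary value $p=2$ where the expression vanishes identically — so the authors likely chose it for uniformity rather than economy. Both arguments are sound; yours is the more elementary and self-contained of the two.
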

\begin{proof}
Let $\theta:=k\arctan(1/x)$. We first note that if $\theta\in
[\pi,2\pi]$, then the inequality is trivial. Let now
$$
g(k,x):=\frac{kx}{1+x^2}-\sin\left(k\arctan\left(\frac{1}{x}\right)\right).
$$
We differentiate it with
respect to $k$
$$
\frac{\partial g}{\partial
k}(k,x)=\frac{x}{1+x^2}-\cos\left(k\arctan\left(\frac{1}{x}\right)\right)\arctan\left(\frac{1}{x}\right),
$$
and
$$
\frac{\partial^2g}{\partial
k^2}(k,x)=\sin\left(k\arctan\left(\frac{1}{x}\right)\right)\arctan^2\left(\frac{1}{x}\right).
$$
Since  $\theta\in(0,\pi)$, this second derivative is strictly
positive, whence the first derivative increases with $k$. It follows
that
$$
\frac{\partial g}{\partial k}(k,x)\geq\frac{\partial g}{\partial
k}(2,x)=\frac{x}{1+x^2}-\frac{x^2-1}{x^2+1}\arctan\left(\frac{1}{x}\right)>0,
$$
the last inequality being trivial for $x\leq1$ and following from
the fact that
$$
\arctan\left(\frac{1}{x}\right)<\frac{1}{x}=\frac{x}{x^2}<\frac{x}{x^2-1},
$$
when $x>1$. Thus, $g$ increases with $k$ for all
$x\geq 0$, hence $g(k,x)\geq g(2,x)=0$.
\end{proof}
\begin{proof}[Proof of Proposition \ref{prop.subs}]
By differentiating it with respect to time and letting $s:=b/r$,
\eqref{spec.subs} writes
\begin{equation}\label{interm12}
\frac{p+4}{3}\delta\leq
r(1+s^2)J(h)^{(p-4)/2}A(h)=\frac{J(h)^{(p-4)/2}}{r^3(1+s^2)^{3/2}}\left(r^4(1+s^2)^{5/2}A(h)\right)
\end{equation}
Thinking at $s\in(0,\infty)$ as an independent variable (not
depending on time), it is enough to prove that the right-hand side
of \eqref{interm12} is uniformly positive for any $p\in(1,2)$. To
this end, we first notice, by straightforward calculations, that, as
$s\to\infty$
$$
\frac{J(h)^{(p-4)/2}}{r^3(1+s^2)^{3/2}}\sim2^{(p-4)/2}\left(\frac{p+4}{3}\right)^{p-4}\frac{1}{(rs)^{p-1}}=C_1(p)b^{1-p}>0,
$$
since $b<b_0<\infty$ and $p>1$. On the other hand, letting
$$
A^*(h):=r^4(1+s^2)^{5/2}A(h), \quad
\theta=\theta(s,p):=\frac{p+4}{3}\arctan\left(\frac{1}{s}\right),
$$
and considering $(s,p)$ as independent variables from now on, we
have
\begin{equation*}
\begin{split}
A^*(s,p)&=-\frac{3(1+s^2)^2\sin(2\theta)\left[9(1+s^2)^2\sin^2\theta+(3-p)(p+4)^2s^2\right]}{54(1+s^2)^{3/2}}\\
&-\frac{2s(p+4)\left[9(1+s^2)^2((p-3)s^2+p-1)\sin^2\theta+(p+4)^2s^2(2p-3-s^2)\right]}{54(1+s^2)^{3/2}}\\
&=-\frac{1}{54(1+s^2)^{3/2}}(I_1+I_2+I_3+I_4),
\end{split}
\end{equation*}
where
$$
I_1=27(1+s^2)^4\sin(2\theta)\sin^2\theta, \quad
I_2=3(3-p)(p+4)^2s^2(1+s^2)^2\sin(2\theta),
$$
and
$$
I_3=18(p+4)s(1+s^2)^2\left[(p-3)s^2+p-1\right]\sin^2\theta, \quad
I_4=2(p+4)^3s^3(2p-3-s^2).
$$
By a rather long but easy calculation that takes into account that
$\sin\theta\sim(p+4)/3s$, as $s\to\infty$, we find
$$
A^*(s,p)\to-\frac{(p+4)(p^5+10p^4+72p^3+256p^2-128p-1536)}{729}=C_2(p)>0
\quad \hbox{as} \ s\to\infty,
$$
which holds true for $p<2$. Thus, in order to get the desired
uniform positivity, it only remains to prove that
$I_1+I_2+I_3+I_4<0$ for any $s\in(0,\infty)$. This will be done in a
series of technical lemmas.

\begin{lemma}\label{lem.T1T3}
With the notation and conditions above, $I_1+I_3<0$ for $p\in(1,2)$,
$s\in(0,\infty)$.
\end{lemma}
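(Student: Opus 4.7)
The plan is to extract a common positive factor from $I_1+I_3$ and then reduce the problem to a mixed trigonometric-polynomial inequality in the single variable $s$, which will be attacked with Lemma \ref{lem.interm}.

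First I would observe that $\sin^2\theta>0$ and $9(1+s^2)^2>0$ for $s\in(0,\infty)$ and $p\in(1,2)$ (since $\theta=(p+4)\arctan(1/s)/3\in(0,(p+4)\pi/6)$ never crosses $\pi$ in this range). Factoring these out and using $\sin(2\theta)=2\sin\theta\cos\theta$, the claim $I_1+I_3<0$ is equivalent to showing
\begin{equation*}
G(s,p):=3(1+s^2)^2\sin(2\theta)-2(p+4)s\bigl[(3-p)s^2-(p-1)\bigr]<0,
\quad s>0,\ p\in(1,2).
\end{equation*}
A useful check is that at $s=0$ one has $2\theta\to(p+4)\pi/3\in(5\pi/3,2\pi)$, so $\sin(2\theta)<0$ and $G(0,p)<0$; as $s\to\infty$, $\sin(2\theta)\sim 2(p+4)/(3s)$ gives the leading behavior $G\sim 2(p+4)(p-2)s^3<0$. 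So the inequality holds at both ends and I only need to control the interior.

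Next I would split into two regimes according to the sign of $\sin(2\theta)$. Setting $s_0(p):=\cot\bigl(3\pi/(2(p+4))\bigr)$, one has $\sin(2\theta)\leq 0$ iff $s\in(0,s_0]$. In this regime the first term in $G$ is nonpositive, and the only work is to show that $G<0$ even when the polynomial term $-2(p+4)s[(3-p)s^2-(p-1)]$ is positive. For small $s$ this follows from the strict negativity of $3(1+s^2)^2\sin(2\theta)$ away from $0^-$ combined with the linear order of the polynomial part; more generally one can argue by monotonicity on $(0,s_0]$, exploiting that $(3-p)s_0^2\geq p-1$ (equality only in the limit $p\to 2$).

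In the second regime $s>s_0$, where $\sin(2\theta)>0$ and a direct sign argument fails, I would apply Lemma \ref{lem.interm} with $k=2(p+4)/3\in(10/3,4)\subset(2,4)$, which gives
\begin{equation*}
\sin(2\theta)=\sin\!\Bigl(\tfrac{2(p+4)}{3}\arctan(1/s)\Bigr)<\frac{2(p+4)s}{3(1+s^2)}.
\end{equation*}
Substituting into $G$ and simplifying reduces the desired inequality to the algebraic one $(p+4)s\bigl[p+(p-2)s^2\bigr]<0$, i.e.\ $s^2>p/(2-p)$. The hardest step will be the intermediate window $s\in\bigl(s_0,\sqrt{p/(2-p)}\bigr)$, where $\sin(2\theta)>0$ yet the crude Lemma \ref{lem.interm} bound is not quite enough; here I expect to need a sharpened estimate, either by exploiting monotonicity of $s\mapsto G(s,p)$ in this window or by a refined comparison using that $\cos\theta$ remains small (since $\theta$ is just below $\pi/2$ throughout), to close the gap. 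Gluing the three pieces gives $G(s,p)<0$ everywhere, and the lemma follows.
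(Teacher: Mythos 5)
The reduction to the single inequality $G(s,p)=I_5(s,p)<0$ (after peeling off the positive factor $9(1+s^2)^2\sin^2\theta$) matches the first step of the paper's proof, and your boundary checks at $s\to 0^+$ and $s\to\infty$ are correct. However, the remainder of your plan has a gap that is not a minor technicality but is in fact the heart of the lemma, and you acknowledge this yourself in the last paragraph.

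The essential obstruction is that $G(s,2)\equiv 0$ for every $s>0$ (one can verify directly that $\sin(4\arctan(1/s))=4s(s^2-1)/(1+s^2)^2$, which makes the two summands cancel exactly at $p=2$). So the inequality you want is tight uniformly in $s$ as $p\to 2^-$, and any $p$-independent trigonometric estimate such as the one supplied by Lemma \ref{lem.interm} is doomed on a growing region. Concretely, your use of the lemma with $k=2(p+4)/3$ yields $G<2(p+4)s\,[\,p+(p-2)s^2\,]$, which only settles the case $s>\sqrt{p/(2-p)}$; as $p\to 2^-$ the threshold $\sqrt{p/(2-p)}\to\infty$, so the uncovered window $\bigl(s_0(p),\sqrt{p/(2-p)}\bigr)$ swallows all of $(1,\infty)$. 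Likewise, in your first regime $s\in(0,s_0]$ the polynomial term $2s(p+4)[(p-3)s^2+p-1]$ is positive precisely for $s<\sqrt{(p-1)/(3-p)}$ (a nontrivial interval for $p>1$), and the claim that the negative trigonometric term dominates there is asserted rather than proved; the phrase ``one can argue by monotonicity on $(0,s_0]$'' is not backed up and again runs into the same degeneracy as $p\to 2^-$, where the trig term also shrinks to zero. So both regimes have holes, and the middle window is left entirely open.

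The paper circumvents all of this by differentiating in $p$ rather than in $s$: it shows $\partial_p I_5>0$ for $p\in(1,2)$ (via $\partial_p^2 I_5>0$, which in turn follows from $\partial_p^3 I_5>0$), and combines this with $I_5|_{p=2}\equiv 0$ to conclude $I_5<0$ for $p<2$. This is the natural move given the tightness at $p=2$, and it is where Lemma \ref{lem.interm} actually enters (at the frozen endpoint $p=1$, with $k=10/3$) together with a few explicit sub-interval checks for $s$. If you wish to persist with an $s$-splitting argument, you would need an estimate on $\sin(2\theta)$ that improves at the same rate $(2-p)$ as the algebraic slack vanishes, which essentially amounts to reproving the $p$-monotonicity in disguise.
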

\begin{proof}
It suffices to show that
$$
I_5=I_5(s,p):=3(1+s^2)^2\sin(2\theta)+2s(p+4)\left[(p-3)s^2+p-1\right]\leq0.
$$
This will be done by showing that $I_5$ is strictly increasing with
$p\in(1,2]$ and $I_5(s,2)=0$ for any $s>0$. Differentiating it once
with respect to $p$, we get
$$
\frac{\partial I_5}{\partial
p}=6(1+s^2)^2\cos(2\theta)\theta_p+2s\left[(2p+1)s^2+2p+3\right].
$$
If $\theta\in[0,\pi/4]\cup[3\pi/4,\pi]$, then obviously
$\frac{\partial I_5}{\partial p}>0$. If $\theta\in(\pi/4,3\pi/4)$,
we differentiate two times more with respect to $p$:
$$
\frac{\partial^2I_5}{\partial
p^2}=4(1+s^2)\left[s-\frac{1}{3}(1+s^2)\arctan^2\left(\frac{1}{s}\right)\sin(2\theta)\right],
$$
and
$$
\frac{\partial^3I_5}{\partial
p^3}=-\frac{4}{9}(1+s^2)^2\arctan^3\left(\frac{1}{s}\right)\cos(2\theta)>0,
$$
taking into account the range of $\theta$. It follows that
$\frac{\partial^2I_5}{\partial p^2}$ is increasing with $p$ and in
particular attains its minimum either for $p\to1$ or for the value
of $p$ for which $\theta=\pi/4$, in case this value of $p$ is
greater than 1. In the latter case, we can restrict the range of $s$
by noticing that $p=3\pi/(4\arctan(1/s))-4>1$, and at the same time
$\theta=(p+4)\arctan(1/s)/3=\pi/4$, whence this second case holds
only if
$$
\frac{1}{\tan(3\pi/20)}<s<\frac{1}{\tan{\pi/8}}.
$$
In this range we have
\begin{equation}\label{interm13}
\frac{\partial^2I_5}{\partial
p^2}\geq4(1+s^2)\left[s-\frac{1+s^2}{3}\arctan^2\left(\frac{1}{s}\right)\right].
\end{equation}
But it is easy to check that in the indicated range for $s$ (which
in particular implies $s>1$), we have
$$
\arctan^2\left(\frac{1}{s}\right)\leq\frac{1}{s^2}<\frac{3s}{1+s^2},
$$
whence the right-hand side in \eqref{interm13} is positive. In the
former case (when the minimum is attained as $p\to1$), we get
\begin{equation}\label{interm14}
\frac{\partial^2I_5}{\partial
p^2}\geq4(1+s^2)\left[s-\frac{1+s^2}{3}\arctan^2\left(\frac{1}{s}\right)\sin\left(\frac{10}{3}\arctan\left(\frac{1}{s}\right)\right)\right].
\end{equation}
To prove that the right-hand side in \eqref{interm14} is positive,
we first notice that if $\theta\in[\pi/2,3\pi/4)$ we are done, since
$\sin(2\theta)\leq0$. It remains to study the case
$\theta\in[\pi/4,\pi/2)$, which implies again a restriction for the
range of $s$ to
$$
\frac{1}{\tan(3\pi/10)}<s<\frac{1}{\tan(3\pi/20)}.
$$
We use then Lemma \ref{lem.interm} for $k=10/3$, obtaining that
\begin{equation}\label{interm15}
\sin\left(\frac{10}{3}\arctan\left(\frac{1}{s}\right)\right)<\frac{10s}{3(1+s^2)},
\end{equation}
and, due to the previous restriction on $s$
 we also get that
$\arctan^2(1/s)<9\pi^2/100$. Joining this with \eqref{interm15}, we obtain
that the right-hand side of \eqref{interm14} is positive in our
range, which shows furthermore that $\frac{\partial I_5}{\partial
p}$ increases with $p$.

In particular, $\frac{\partial I_5}{\partial p}$ attains its minimum
either for $p\to1$ or for the value of $p$ for which $\theta=\pi/4$,
in case this value of $p$ is greater than 1. In the latter, we
trivially get
$$
\frac{\partial I_5}{\partial p}\geq2s\left[(2p+1)s^2+2p+3\right]>0.
$$
In the former, we replace $p=1$ to obtain
$$
\frac{\partial I_5}{\partial
p}\geq2\left[(1+s^2)^2\cos\left(\frac{10}{3}\arctan\left(\frac{1}{s}\right)\right)\arctan\left(\frac{1}{s}\right)+s(3s^2+5)\right]
$$
In order to prove that the right-hand side above is positive for
$\theta\in(\pi/4,3\pi/4)$, we restrict the range of $s$ to
$s\in(1/\tan(9\pi/20),1/\tan(3\pi/20))$. Since $10/3\arctan(1/s)\in [\frac{\pi}{2},\frac{5\pi}{3}]$, then
$
\cos\left(\frac{10}{3}\arctan\left(\frac{1}{s}\right)\right)>-\frac{1}{2}.
$
Hence, we only have to check that
$$
l(s):=s(3s^2+5)-\frac{1}{2}(1+s^2)^2\arctan\left(\frac{1}{s}\right)>0,
\quad \hbox{for} \ \hbox{any} \
s\in\left(\frac{1}{\tan(9\pi/20)},\frac{1}{\tan(3\pi/20)}\right).
$$
It is easily seen that $l$ is increasing in $s$ and there exists a
unique $s_1>0$ such that $l(s_1)=0$. By approximation using for
example Newton's method, we find that $s_1<0.15<1/\tan(9\pi/20)$,
which shows that we have covered all the admissible interval for
$s$.

Thus, $I_5$ is increasing with respect to $p$ and it follows that
$I_5<0$ for $p\in(1,2)$, since $I_5|_{p=2}\equiv0$.
\end{proof}
\begin{lemma}\label{lem.T2T3}
Under the notation and conditions above, $I_2+I_3<0$ for any
$p\in(1,2)$ and $s>\sqrt{3/7}$.
\end{lemma}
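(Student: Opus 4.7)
My plan is to mirror the monotonicity-in-$p$ strategy used for $I_5$ in Lemma \ref{lem.T1T3}. Using $\sin(2\theta)=2\sin\theta\cos\theta$, I would first factor
\begin{equation*}
I_2+I_3 = 6(p+4)s(1+s^2)^2\sin\theta\cdot\Phi(s,p),\qquad
\Phi(s,p):=(3-p)(p+4)s\cos\theta + 3[(p-3)s^2+p-1]\sin\theta.
\end{equation*}
Since $\theta=\frac{p+4}{3}\arctan(1/s)\in(0,(p+4)\pi/6)\subset(0,\pi)$ for $p\in(1,2)$, we have $\sin\theta>0$, so the displayed prefactor is strictly positive and the task reduces to showing $\Phi(s,p)<0$ in the prescribed parameter range.

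I would prove this in two steps. First, a direct computation shows $\Phi(s,2)\equiv 0$: at $p=2$ one has $\theta=2\arctan(1/s)$, whence $\sin\theta=\frac{2s}{1+s^2}$ and $\cos\theta=\frac{s^2-1}{1+s^2}$, and substitution gives
\begin{equation*}
\Phi(s,2)=\frac{6s(s^2-1)+6s(1-s^2)}{1+s^2}=0.
\end{equation*}
Second, I would show $\partial_p\Phi(s,p)>0$ in the prescribed range, from which $\Phi(s,p)<\Phi(s,2)=0$ follows. Differentiating yields
\begin{equation*}
\partial_p\Phi = 3(s^2+1)\sin\theta - (2p+1)s\cos\theta - \theta_p\bigl\{(3-p)(p+4)s\sin\theta + 3[(3-p)s^2-(p-1)]\cos\theta\bigr\},
\end{equation*}
with $\theta_p=\tfrac13\arctan(1/s)>0$. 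The restriction $s>\sqrt{3/7}$ corresponds precisely to the threshold $(p-1)/(3-p)=\tfrac37$ attained at $p=8/5$: for $p\in(1,8/5]$ it forces $(3-p)s^2-(p-1)>0$ uniformly, which fixes the sign of the bracketed term and makes the monotonicity readily verifiable.

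The main obstacle will be the sign analysis of $\partial_p\Phi$ in the regime where $\cos\theta$ may vanish or become negative (namely for $s$ close to $\sqrt{3/7}$, where $\theta$ can approach $\pi/2$ or cross it). As in the proof of Lemma \ref{lem.T1T3}, I would split into subranges according to the quadrant of $\theta\in(0,\pi)$ and, whenever oscillatory contributions such as $\sin(2\theta)$ enter, invoke Lemma \ref{lem.interm} with $k=2(p+4)/3\in(10/3,4)\subset(2,4)$, which gives the explicit bound $\sin(2\theta)<\frac{2(p+4)s}{3(1+s^2)}$. If first-order monotonicity is not delicate enough in some sub-window of $p$, I would follow the template of Lemma \ref{lem.T1T3} and take one or two further derivatives in $p$ to bootstrap the sign, again exploiting the exact cancellation at $p=2$ originating from the special identity $\theta|_{p=2}=2\arctan(1/s)$.
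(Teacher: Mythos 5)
Your reduction to $\Phi(s,p):=(3-p)(p+4)s\cos\theta+3[(p-3)s^2+p-1]\sin\theta<0$ is correct (the prefactor is positive because $\theta\in(0,\pi)$ so $\sin\theta>0$), and your verification that $\Phi(s,2)\equiv 0$ is right. However, your proposed mechanism for proving $\partial_p\Phi>0$ contains a genuine gap, and your identification of the $3/7$ threshold is a numerical coincidence rather than the structural reason. You tie $s>\sqrt{3/7}$ to $(p-1)/(3-p)=3/7$ at $p=8/5$, which only ensures $(3-p)s^2-(p-1)>0$ when $p\leq 8/5$; for $p\in(8/5,2)$ that factor changes sign on the interval $s\in(\sqrt{3/7},\sqrt{(p-1)/(3-p)})$, and your sketch does not say how to handle that sub-range. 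Even where the factor is positive, $\partial_p\Phi$ is a difference of three terms of mixed sign ($3(s^2+1)\sin\theta>0$, $-(2p+1)s\cos\theta$ of the sign of $-\cos\theta$, and the subtracted $\theta_p\{\cdots\}$), so nothing about the threshold ``fixes'' the sign directly; you would still need quantitative estimates (your proposed Lemma~\ref{lem.interm} bound and possibly further $p$-derivatives) to close the argument.

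The paper instead differentiates in $s$ with $p$ fixed, after rewriting $\Phi<0$ as
\begin{equation*}
\varphi(s):=\theta(s,p)-\mathrm{arccot}\!\left(-\frac{3(p-3)s^2+3(p-1)}{(3-p)(p+4)s}\right)>0.
\end{equation*}
The arccot transformation is what makes the computation collapse: both $\theta$ and the arccot term are angles whose $s$-derivatives are rational, so $\varphi'(s)=A(s,p)/B(s,p)$ with $B>0$ and $A=(2-p)(p+4)[s^2(p-3)(p^2+7p+24)+18(p-1)]$. Thus $A<0$ precisely when $s^2>\frac{18(p-1)}{(3-p)(p^2+7p+24)}$, and the right-hand side, maximized over $p\in(1,2)$ at $p=2$, equals exactly $3/7$. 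Hence $s>\sqrt{3/7}$ makes $\varphi$ strictly decreasing for \emph{every} $p\in(1,2)$, and $\varphi(s)>\lim_{s\to\infty}\varphi(s)=0$ finishes the proof. This explains the $3/7$ threshold cleanly and covers the whole $p$-range in one stroke, whereas your $p$-monotonicity route, while numerically plausible, would require the same sort of multi-derivative case analysis as Lemma~\ref{lem.T1T3} with no obvious payoff.
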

\begin{proof}
The inequality to prove is equivalent to
$$
(3-p)(p+4)s\cos\theta+3\left[(p-3)s^2+p-1\right]\sin\theta<0,
$$
or furthermore
$$
\varphi(s):=\theta(s,p)-\hbox{arccot}\left(-\frac{3(p-3)s^2+3(p-1)}{(3-p)(p+4)s}\right)>0.
$$
We differentiate once with respect to $s$ and find
$$
\varphi'(s)=\frac{(2-p)(p+4)\left[s^2(p-3)(p^2+7p+24)+18(p-1)\right]}{3(1+s^2)\left[9s^4(p-3)^2+s^2(p-3)(p^3+5p^2+10p-66)+9(p-1)^2\right]}=\frac{A(s,p)}{B(s,p)}.
$$
It is obvious that $B(s,p)>0$. On the other hand, as it easy to
check, for $1<p<2$,
$$
\frac{18(p-1)}{(3-p)(p^2+7p+24)}\leq\frac{3}{7}<s^2,
$$
whence $A(s,p)<0$ in our range for $s$ and $p$. Thus, $\varphi(s)$
is decreasing with $s$ for $s>\sqrt{3/7}$, hence
$\varphi(s)>\lim\limits_{s\to\infty}\varphi(s)=0$, which ends the
proof.
\end{proof}
\begin{lemma}\label{lem.T1T4}
Under the notation and conditions above, $I_1+I_4<0$ for any
$p\in(1,2)$.
\end{lemma}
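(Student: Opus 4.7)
I would mirror the strategy of Lemma \ref{lem.T1T3}: exhibit that $I_1+I_4$ vanishes at the endpoint $p=2$ and then show that $I_1+I_4$ is strictly increasing in $p$ on $(1,2]$, so that $I_1+I_4<0$ throughout $(1,2)$.

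The first step is an explicit evaluation at $p=2$. Write $\phi:=\arctan(1/s)$; then $\sin\phi=1/\sqrt{1+s^2}$ and $\cos\phi=s/\sqrt{1+s^2}$, and double-angle identities give
\[
\sin(2\phi)=\frac{2s}{1+s^2},\qquad \cos(2\phi)=\frac{s^2-1}{s^2+1},\qquad \sin(4\phi)=\frac{4s(s^2-1)}{(1+s^2)^2}.
\]
At $p=2$ we have $\theta=2\phi$, so $\sin^2\theta=4s^2/(1+s^2)^2$ and $\sin(2\theta)=\sin(4\phi)$, whence
\[
I_1\big|_{p=2}=27(1+s^2)^4\cdot\frac{4s(s^2-1)}{(1+s^2)^2}\cdot\frac{4s^2}{(1+s^2)^2}=432\,s^3(s^2-1),
\]
while $I_4|_{p=2}=2\cdot 6^3\,s^3(1-s^2)=-432\,s^3(s^2-1)$. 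Hence $(I_1+I_4)|_{p=2}\equiv 0$.

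For the monotonicity, differentiate using $\theta_p=\phi/3$ and the identity $\cos(2\theta)\sin\theta+\sin(2\theta)\cos\theta=\sin(3\theta)$ to obtain
\[
\frac{\partial I_1}{\partial p}=18(1+s^2)^4\,\phi\,\sin\theta\,\sin(3\theta),\qquad
\frac{\partial I_4}{\partial p}=2(p+4)^2 s^3\bigl(8p-1-3s^2\bigr).
\]
The goal is to prove $\partial_p(I_1+I_4)>0$ for $p\in(1,2)$, $s>0$. I expect to have to split on the sign of $\sin(3\theta)$, i.e.\ on the position of $3\theta=(p+4)\arctan(1/s)$ modulo $\pi$. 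When $3\theta\in(0,\pi)\cup(2\pi,3\pi)$ one has $\sin(3\theta)>0$ and $\partial_p I_1\ge 0$, so it suffices to control $\partial_p I_4$ (easy when $8p-1-3s^2\ge 0$; for $s$ large, $\partial_p I_4$ becomes negative but one can use Lemma \ref{lem.interm} applied to $k=2(p+4)/3\in(10/3,4)$ to bound $\sin(2\theta)$ and hence push the contribution of $\partial_p I_1$ to dominate, mimicking the asymptotic analysis already carried out in the paper for $s\to\infty$). When $3\theta\in(\pi,2\pi)$ we have $\sin(3\theta)<0$; in that range $s$ lies in a bounded interval (bounded below away from $0$ by $\cot((p+4)\pi/3)$ if this is positive, and above by $\cot((p+4)\pi/9)$), and both terms can be estimated by elementary bounds on $\arctan$ and on trigonometric functions, possibly after a further differentiation in $p$ as done in the proof of Lemma \ref{lem.T1T3}, where higher-order monotonicity combined with an evaluation at $p=1$ reduces the claim to a verification on a compact set.

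The main obstacle will be the intermediate region where $\sin(3\theta)<0$: there the monotonicity in $p$ is not immediate term by term, and one must delicately balance the two summands. I would handle it by imitating the cascade used in Lemma \ref{lem.T1T3}—iterating $\partial/\partial p$ until the resulting expression has a fixed sign, then integrating back and checking the boundary values at $p=1$ on an explicit compact $s$-interval (if necessary, by a numerical check akin to the one performed on $l(s)$ at the end of Lemma \ref{lem.T1T3}). Once $\partial_p(I_1+I_4)>0$ is established throughout, the integral identity $(I_1+I_4)(s,p)=-\int_p^2\partial_q(I_1+I_4)(s,q)\,\mathrm{d}q$ combined with $(I_1+I_4)(s,2)=0$ yields $I_1+I_4<0$ for all $p\in(1,2)$ and $s>0$, finishing the proof.
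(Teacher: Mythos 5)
Your outer strategy—compute $(I_1+I_4)|_{p=2}=0$, establish monotonicity in $p$, and integrate back—matches the paper's overall plan (the paper works with $I_6=\tfrac12(I_1+I_4)$ and likewise uses $I_6|_{p=2}=0$), and your computations of $(I_1+I_4)|_{p=2}$, $\partial_p I_1 = 18(1+s^2)^4\phi\sin\theta\sin(3\theta)$ and $\partial_p I_4 = 2(p+4)^2 s^3(8p-1-3s^2)$ are all correct. The problem is that the heart of the proof, namely $\partial_p(I_1+I_4)>0$ for all $p\in(1,2)$ and $s>0$, is nowhere established; your proposal replaces it with a description of what one ``would'' do (``iterate $\partial/\partial p$ until the sign is fixed,'' ``if necessary by a numerical check''), and the regimes you flag as hard are exactly those where the sign is not at all obvious: $\sin(3\theta)<0$ with $s$ in an intermediate range, and $\sin(3\theta)>0$ but $8p-1-3s^2<0$. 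So as written this is a plan, not a proof.

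There is also a concrete misstep in the sketch. To handle the case $\sin(3\theta)>0$ with $8p-1-3s^2<0$ (so $\partial_p I_4<0$), you propose to ``use Lemma \ref{lem.interm} applied to $k=2(p+4)/3$ to bound $\sin(2\theta)$ and hence push the contribution of $\partial_p I_1$ to dominate.'' But $\partial_p I_1$ contains $\sin\theta\sin(3\theta)$, not $\sin(2\theta)$, and Lemma \ref{lem.interm} gives an \emph{upper} bound on $\sin(k\arctan(1/s))$ — an upper bound is the wrong direction for showing a positive term is large enough to absorb a negative one. The paper uses Lemma \ref{lem.interm} at a different place and for a different purpose: after the direct monotonicity argument disposes of the range $\theta\ge 2\pi/3$, the paper applies the lemma with $k=2(p+4)/3$ directly to $I_1$ itself (not to $\partial_p I_1$), replacing $\sin(2\theta)$ by the rational upper bound $\tfrac{2(p+4)s}{3(1+s^2)}$ so as to pass from $I_1+I_4$ to the much more tractable auxiliary quantity $\overline I_6 = 9(1+s^2)^3\sin^2\theta + s^2(p+4)^2(2p-s^2-3)$, and then runs the cascade of $p$-derivatives (up to $\partial_p^4$) on $\overline I_6$. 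That substitution is what makes the cascade manageable; attempting it directly on $I_1+I_4$, as you propose, produces higher-harmonic terms like $\sin(2\theta)$, $\sin(4\theta)$ in the second derivative with no fixed sign, and it is not clear the cascade closes. In short: the reduction via Lemma \ref{lem.interm} is the key technical ingredient of the paper's proof, and your sketch misidentifies where and how it is used; the remaining monotonicity claim is asserted but not proved.
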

\begin{proof}
Let $\theta=\theta(s,p)=\frac{p+4}{3}\arctan(\frac{1}{s})\in
[0,\pi]$ and let
$$I_6:=27(1+s^2)^4\sin^3(\theta)\cos(\theta)+s^3(p+4)^3(2p-s^2-3).$$We
differentiate $I_6$ with respect to $p$ and we obtain
$$\frac{\partial I_6}{\partial
p}=27(1+s^2)^4\theta_p(4\sin^2(\theta)\cos^2(\theta)-\sin^4(\theta))-s^3(p+4)^2(3s^2-8p+1).$$In
case that $\theta\in [\frac{2\pi}{3},\pi]$ (which implies that
$s\leq1/\sqrt{3}$), both summands are non-negative and therefore
$\frac{\partial I_6}{\partial p}\geq 0$. This implies that $I_6\leq
I_6|_{p=2}=0$.

For the case in which $\theta\in [0,\frac{2\pi}{3})$, we use Lemma
\ref{lem.interm} and we obtain that
\begin{equation*}
\begin{split}
I_1+I_4&=27(1+s^2)^4\sin(2\theta)\sin^2(\theta)+2s^3(p+4)^3(2p-s^2-3)\\
&<18(p+4)s(1+s^2)^3\sin^2\theta+2s^3(p+4)^3(2p-s^2-3),
\end{split}
\end{equation*}
hence we will study the sign of
$$
{\overline I}_6:=9(1+s^2)^3\sin^2\theta+s^2(p+4)^2(2p-s^2-3).
$$
As in the previous Lemma \ref{lem.T1T3}, we differentiate several
times with respect to $p$, and obtain
$$
\frac{\partial {\overline I}_6}{\partial
p}=9(1+s^2)^3\sin(2\theta)\theta_p+2s^2(p+4)(3p+1-s^2),
$$
$$
\frac{\partial^2 {\overline I}_6}{\partial
p^2}=18(1+s^2)^3\cos(2\theta)\theta_p^2+2s^2(6p+13-s^2),
$$
$$
\frac{\partial^3 {\overline I}_6}{\partial
p^3}=12\left[s^2-3(1+s^2)^3\sin(2\theta)\theta_p^3\right],
$$and finally,
$$\frac{\partial^4 {\overline I}_6}{\partial
p^4}=-\frac{8}{9}(1+s^2)\cos(2\theta)\arctan^4\left(\frac{1}{s}\right).$$
Therefore $$\frac{\partial^3 {\overline I}_6}{\partial
p^3}\geq\left\{\begin{array}
  {cc} \frac{\partial^3 {\overline I}_6}{\partial
p^3}|_{p=2}, & {\rm if\ } \theta(s,2)\leq \frac{\pi}{4}, \\ \\
\frac{\partial^3 {\overline I}_6}{\partial
p^3}|_{\theta=\frac{\pi}{4}}, & {\rm if }\ \theta(s,1)\leq
\frac{\pi}{4}\leq \theta(s,2), \\ \\ \frac{\partial^3 {\overline
I}_6}{\partial p^3}|_{p=1}, & {\rm otherwise.}
\end{array}\right.$$ In the first case,
$$\frac{\partial^3 {\overline I}_6}{\partial
p^3}\geq \frac{16}{3}s(1-s^4)\arctan^3\left(\frac{1}{s}\right) +
12s^2\geq \frac{4}{3s^2}(5s^4+4)>0,$$ the intermediate inequality
following after some algebraic manipulations from the standard fact
that $\arctan(1/s)\leq1/s$ for any $s>0$.

In the second and third case, we can estimate  
\begin{equation}\label{interm51}\frac{\partial^3 {\overline I}_6}{\partial p^3}\geq 
12s^2 - \frac{4}{3}(1+s^2)^3\arctan^3\left(\frac{1}{s}\right).
\end{equation}

It remains to show that the right hand side in \eqref{interm51} is
positive within the range
$$\frac{1}{\sqrt{3}}<s<\frac{1}{\tan(\frac{\pi}{8})}.$$ In order to
prove it, we introduce
$$l(s):=12s^2 -
\frac{4}{3}(1+s^2)^3\arctan^3\left(\frac{1}{s}\right).$$ We
differentiate it and, taking into account again that
$\arctan(1/s)\leq1/s$ we obtain
\begin{equation*}\begin{split}l'(s)& =4\left[(1+s^2)^2\arctan^2\left(\frac{1}{s}\right)\left(1-2s\arctan\left(\frac{1}{s}\right)\right)+6s\right]\\ &\geq
4\left[6s-(1+s^2)^2\arctan^2\left(\frac{1}{s}\right)\right]=:\tilde
l(s).\end{split}\end{equation*} and furthermore
\begin{equation*}\begin{split}\tilde
l'(s)& =8\left[(1+s^2)\arctan\left(\frac{1}{s}\right)\left(1-2s\arctan\left(\frac{1}{s}\right)\right)+3\right]\\ & >8\left[3-(1+s^2)\arctan\left(\frac{1}{s}\right)\right]
>0,\end{split}\end{equation*} as it is easy to check in the range $1/\sqrt{3}<s<1/\tan(\pi/8)$.
Then, $\tilde l$ increases with $s$ which implies that
$l'(s)> \tilde l(1/\sqrt{3})>0$; hence,
$l$ increases with $s$ yielding
$l(s)>l(1/\sqrt{3})=4-\frac{256\pi^3}{2187}>0$.

Hence $\frac{\partial^2 {\overline I}_6}{\partial p^2}$ is increasing with
respect to $p$. Thus, letting $p\to1$, we get
$$
\frac{\partial^2 {\overline I}_6}{\partial
p^2}\geq2\cos(2\theta)\arctan^2\left(\frac{1}{s}\right)(1+s^2)^3+2s^2(19-s^2).
$$
We notice that the right-hand side above is clearly positive when
$\cos(2\theta)>0$, whence it remains to prove it when
$\theta\in(\pi/4,2\pi/3)$, that is (taking into account that $p=1$)
$$
\frac{1}{\sqrt{3}}<s<\frac{1}{\tan(3\pi/20)}.
$$
In this range,
\begin{equation*}
\begin{split}
\frac{\partial^2 {\overline I}_6}{\partial
p^2}&\geq2\left[s^2(19-s^2)-(1+s^2)^3\arctan^2\left(\frac{1}{s}\right)\right].
\end{split}
\end{equation*}
Now, if $\frac{1}{\sqrt{3}}<s<1$, we use the inequality
$\arctan(1/s)\leq \frac{3s}{1+s^2}$ and we obtain that
$$\frac{\partial^2 {\overline I}_6}{\partial p^2}\geq
20s^2(1-s^2)>0.$$ If instead $s>1$ we use $\arctan(1/s)\leq1/s$ and
we obtain that $$\frac{\partial^2 {\overline I}_6}{\partial p^2}\geq
\frac{2}{s^2}[s^4(19-s^2)-(1+s^2)^3]>0.$$ for $1\leq
s\leq1/\tan(3\pi/20)$.

It follows that $\frac{\partial^2 {\overline I}_6}{\partial
p^2}>0$ and the first derivative is increasing with respect to $p$.
We again let $p\to1$ to get
\begin{equation}\label{interm16}
\frac{\partial {\overline I}_6}{\partial
p}\geq3(1+s^2)^3\sin\left(\frac{10}{3}\arctan\left(\frac{1}{s}\right)\right)\arctan\left(\frac{1}{s}\right)+10s^2(4-s^2)
\end{equation}
To prove that the right-hand side above is positive, we divide the
interval for $s$ as follows
$$
\left(\frac{1}{\sqrt{3}},\infty\right)=\left(\frac{1}{\sqrt
3},\frac{1}{\tan(3\pi/10)}\right)\cup\left[\frac{1}{\tan(3\pi/10)},\frac{1}{\tan(\frac{3\pi}{20})}\right]
\cup\left(\frac{1}{\tan(\frac{3\pi}{20})},\infty\right)=J_1\cup
J_2\cup J_3,
$$
and notice that for $s\in J_2$, both terms in the right-hand side of
\eqref{interm16} are positive. Also for $s\in J_1$, we easily get
\begin{equation*}
\begin{split}
\frac{\partial {\overline I}_6}{\partial
p}&\geq10s^2(4-s^2)-3(1+s^2)^3\frac{3\pi}{10}\\
&\geq\frac{10}{3}\left(4-\frac{1}{\tan^2(3\pi/10)}\right)-\frac{9\pi}{10}\left(1+\frac{1}{\tan^2(3\pi/10)}\right)^3>0
\end{split}
\end{equation*}
Finally, for
$s>1/\tan(3\pi/20)$, we can invert the sine function and the
inequality to prove is equivalent to
$$
\psi(s):=\frac{10}{3}\arctan\left(\frac{1}{s}\right)-\arcsin\left(\frac{10s^2(s^2-4)}{3(1+s^2)^3\arctan(1/s)}\right)>0.
$$
We show that $\psi$ is decreasing with respect to $s$, or
equivalently $\psi'(s)<0$. This is, after straightforward
calculations, equivalent to
$$
3(s^4-16s^2+8)-\arctan\left(\frac{1}{s}\right)\sqrt{9(s^2+1)^6\arctan^2\left(\frac{1}{s}\right)-100s^4(s^2-4)^2}<0,
$$
which can be proved by estimating $\arctan(1/s)$ in the convenient
way at each point, using the well-known inequalities
$$
\frac{1}{s}-\frac{1}{3s^3}<\arctan\left(\frac{1}{s}\right)<\frac{1}{s},
$$
we omit the details. It follows that $\psi$ is decreasing, whence it
is positive for $s\in(2,\infty)$, since
$\lim\limits_{s\to\infty}\psi(s)=0$.

We finally obtain that $\frac{\partial {\overline I}_6}{\partial p}>0$ for any
$s>\frac{1}{\sqrt{3}}$ and $p\in(1,2)$, hence ${\overline I}_6$ is increasing with
$p\in(1,2)$. The conclusion follows then from the fact that
${\overline I}_6|_{p=2}\equiv0$.
\end{proof}
\begin{lemma}\label{lem.T2T4}
Under the notation and conditions above, $I_2+I_4<0$ for any
$p\in(1,2)$ and $s\in(0,\sqrt{3/7})$.
\end{lemma}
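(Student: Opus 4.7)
The plan is to mimic the structure that has worked for Lemmas \ref{lem.T1T3} and \ref{lem.T1T4}: show that $I_2+I_4$ vanishes identically at $p=2$ and is monotone increasing in $p$ throughout the relevant range, from which the strict inequality $I_2+I_4<0$ for $p\in(1,2)$ follows at once. Before starting, I will record the sign of $\sin(2\theta)$ in the range $s\in(0,\sqrt{3/7})$, $p\in(1,2)$: since
$$2\theta=\tfrac{2(p+4)}{3}\arctan(1/s)>\tfrac{10}{3}\arctan(\sqrt{7/3})>\pi$$
(the last inequality because $\sqrt{7/3}>\tan(3\pi/10)$) while $2\theta<2\pi$ because $\theta<\pi$, one has $\sin(2\theta)<0$ throughout, hence $I_2<0$.

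First I would dispose of the easy regime $p\le(3+s^2)/2$, where $2p-3-s^2\le 0$ makes $I_4\le 0$; combined with $I_2<0$ this yields $I_2+I_4<0$ immediately. In particular, this already covers all $p\le 3/2$, so the substantive case is $p\in((3+s^2)/2,2)$, where $I_4>0$ and one must show $|I_2|>I_4$. Next I would check the value at $p=2$ by a direct trigonometric computation: using the double-angle identities with $\alpha=\arctan(1/s)$ one gets $\sin(4\alpha)=4s(s^2-1)/(1+s^2)^2$, so
$$I_2|_{p=2}=432\,s^3(s^2-1),\qquad I_4|_{p=2}=432\,s^3(1-s^2),$$
and consequently $(I_2+I_4)|_{p=2}\equiv 0$.

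The main work will be the monotonicity claim $\partial_p(I_2+I_4)>0$ on $p\in((3+s^2)/2,2)$, $s\in(0,\sqrt{3/7})$. A direct computation (with $\theta_p=\arctan(1/s)/3$) gives
$$\partial_p I_2=3s^2(1+s^2)^2(p+4)\Bigl[(2-3p)\sin(2\theta)+\tfrac{2}{3}(3-p)(p+4)\arctan(1/s)\cos(2\theta)\Bigr],$$
$$\partial_p I_4=2s^3(p+4)^2(8p-1-3s^2).$$
Two of the three contributions are already nonnegative in the relevant range: $(2-3p)\sin(2\theta)>0$ (product of two negatives) and $\partial_p I_4>0$ because $8p-1-3s^2\ge 8-1-9/7>0$. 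The remaining term carries $\cos(2\theta)$, whose sign changes across $2\theta=3\pi/2$. I would split the $s$-range into two subintervals according to whether $2\theta\lessgtr 3\pi/2$: on the subinterval where $\cos(2\theta)\ge 0$ all three summands are $\ge 0$ and we are done, while on the subinterval where $\cos(2\theta)<0$ I would bound $|\cos(2\theta)|\le 1$ and use Lemma \ref{lem.interm} (applied with $k=2(p+4)/3\in(10/3,4)$) to obtain a sharp lower bound $|\sin(2\theta)|\ge \tfrac{2(p+4)s}{3(1+s^2)}$ which makes the first bracket dominate. If this direct argument proves insufficient near the crossing $2\theta=3\pi/2$, the fallback is the bootstrap used in Lemma \ref{lem.T1T4}: differentiate once more in $p$, exploit that $\partial_p^2$ becomes a sum with a manifestly positive dominant algebraic piece, and trace back to the claim via the value and derivative data at the endpoints $p=1$ and $p=2$.

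The hardest point will be the narrow transitional sub-range in $s$ where $2\theta$ sits close to $3\pi/2$ and both $\sin(2\theta)$ and $\cos(2\theta)$ are of moderate size: there the ``easy'' splitting into positive summands fails and a genuine numerical fight between the $(2-3p)\sin(2\theta)$ and $(3-p)(p+4)\arctan(1/s)\cos(2\theta)$ contributions must be controlled using the elementary bounds $\arctan(1/s)\le 1/s$ and $\arctan(1/s)\ge 1/s-1/(3s^3)$. Combining Steps above, $I_2+I_4$ is strictly increasing in $p$ on $((3+s^2)/2,2]$ with $(I_2+I_4)|_{p=2}=0$, yielding $I_2+I_4<0$ on the full range stated in the lemma.
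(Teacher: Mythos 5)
Your overall framework — establish that $I_2+I_4$ vanishes at $p=2$ and is monotone increasing in $p$, following the template of the previous two lemmas — is indeed the paper's strategy, and your preliminary observations are sound: $\sin(2\theta)<0$ throughout because $2\theta\in(\pi,2\pi)$ for $s<\sqrt{3/7}$, the easy regime $I_4\le0$ is correctly dispatched, and your computations of $\partial_pI_2$, $\partial_pI_4$, and the vanishing at $p=2$ are all correct. However, the pivotal step in your monotonicity argument contains a genuine error. You claim that Lemma~\ref{lem.interm} applied with $k=2(p+4)/3$ yields a \emph{lower} bound $|\sin(2\theta)|\ge \tfrac{2(p+4)s}{3(1+s^2)}$ in the region where $\sin(2\theta)<0$. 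But Lemma~\ref{lem.interm} only asserts the \emph{upper} bound $\sin(k\arctan(1/x))<\tfrac{kx}{1+x^2}$; when the left-hand side is negative this inequality is trivially true and gives no control at all on $|\sin|$. Indeed, the lower bound you want cannot hold: as $2\theta\downarrow\pi$ (i.e., near the lower end of the $\cos(2\theta)<0$ subinterval), $\sin(2\theta)\to0^-$ while $\tfrac{2(p+4)s}{3(1+s^2)}$ stays bounded away from zero, so the claimed inequality fails. Consequently the ``first bracket dominates'' argument collapses in precisely the transitional range that you yourself flag as the hardest point.

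The fallback you sketch — differentiate once more in $p$ — is in fact what the paper does: it works with the normalized quantity $I_7:=3(3-p)(1+s^2)^2\sin(2\theta)+2s(p+4)(2p-3-s^2)$ (which has the same sign as $I_2+I_4$), splits on whether $\theta\ge3\pi/4$ or $\theta\in(\theta_0,3\pi/4)$ with $\theta_0=\tfrac{5}{3}\arctan\sqrt{7/3}>\pi/2$, shows $\partial_p^2I_7>0$ in the difficult subrange because $\cos(2\theta)$ and $\sin(2\theta)$ are then both negative, and reduces the problem to proving $\partial_pI_7|_{p=1}>0$, which requires a further case analysis (and even then the paper leaves a last technical differentiation to the reader). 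So your proposal correctly identifies the overall shape of the argument but leaves the genuinely substantive part — the second-derivative bootstrap and the evaluation at $p=1$ — entirely to a ``fallback'' that is not carried out. As written, the proof has a gap exactly where the work needed to be done.
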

\begin{proof}
The conclusion is equivalent to
$$
I_7:=3(3-p)(1+s^2)^2\sin(2\theta)+2s(p+4)(2p-3-s^2)<0.
$$
We differentiate it with respect to $p$:
$$
\frac{\partial I_7}{\partial
p}=-3(1+s^2)^2\sin(2\theta)+(3-p)(1+s^2)^2\cos(2\theta)\theta_p+2s(4p+5-s^2).
$$
Since $s\in(0,\sqrt{3/7})$, we have
$$
\theta=\frac{p+4}{3}\arctan\left(\frac{1}{s}\right)>\frac{5}{3}\arctan\left(\sqrt{\frac{7}{3}}\right)=:\theta_0>\pi/2,
$$
and if $\theta\in[3\pi/4,\pi]$, the first derivative is obviously
positive since $\cos(2\theta)>0$, $\sin(2\theta)<0$ and
$4p+5-s^2>9-3/7>0$. In the complementary case
$\theta\in(\theta_0,3\pi/4)$, we differentiate again with respect to
$p$ to find
$$
\frac{\partial^2I_7}{\partial
p^2}=-12(1+s^2)^2\left[\cos(2\theta)+(3-p)\sin(2\theta)\theta_p\right]\theta_p+8s>0,
$$
since $\sin(2\theta)<0$ and $\cos(2\theta)<0$ in the considered
range of $\theta$. Hence $\frac{\partial I_7}{\partial p}$ increases
with $p$ for all angles $\theta\in(\theta_0,\pi)$, in particular
attaining its minimum as $p\to1$. This implies that
\begin{equation}\label{interm17}\begin{split}
\frac{\partial I_7}{\partial
p}& \geq-3(1+s^2)^2\sin\left(\frac{10}{3}\arctan\left(\frac{1}{s}\right)\right)\\ & +4(1+s^2)^2\cos\left(\frac{10}{3}\arctan\left(\frac{1}{s}\right)\right)\arctan\left(\frac{1}{s}\right)+2s(9-s^2).
\end{split}\end{equation}
In order to prove that the right-hand side in \eqref{interm17} is
positive, we first notice that this is trivial for
$\theta_s:=\frac{5}{3}\arctan(1/s)\in[3\pi/4,\pi]$, since $\sin(2\theta_s)<0$ and
$\cos(2\theta_s)>0$. For $\theta_s\in(\theta_0,3\pi/4)$, we drop the
first term and prove the stronger fact that
$$
4(1+s^2)^2\cos\left(\frac{10}{3}\arctan\left(\frac{1}{s}\right)\right)\arctan\left(\frac{1}{s}\right)+2s(9-s^2)>0,
$$
or equivalently
$$
\frac{s(9-s^2)}{2(s^2+1)^2\arctan(1/s)}+\cos\left(\frac{10}{3}\arctan\left(\frac{1}{s}\right)\right)>0.
$$
It is easy to check that there exists a unique
$s_0\in(0,\sqrt{3/7})$ such that
$$
\frac{s(9-s^2)}{2(s^2+1)^2\arctan(1/s)}\geq1 \ \hbox{for} \ s\geq
s_0, \quad \frac{s(9-s^2)}{2(s^2+1)^2\arctan(1/s)}<1 \ \hbox{for} \
s<s_0.
$$
In the interval $[s_0,\sqrt{3/7})$, we are done. In the remaining
part $s\in(0,s_0)$, we can invert the cosine function, in a similar
manner as in proof of Lemma \ref{lem.T1T4}, and then prove that the
resulting function is decreasing by differentiation and estimates.
We skip the technical details.

Thus, $I_7$ is increasing with $p\in(1,2)$ and thus it is negative,
since $I_7|_{p=2}\equiv0$.
\end{proof}
From Lemmas \ref{lem.T1T3}, \ref{lem.T2T3}, \ref{lem.T1T4} and
\ref{lem.T2T4} we deduce that $I_1+I_2+I_3+I_4<C_0(p)<0$ for any
$p\in(1,2)$ fixed, $s\in(0,\infty)$, which, together with the good
behavior as $s\to\infty$, end the proof.
\end{proof}

\noindent{\bf Acknowledgements}. The authors have been partially
supported by the Spanish MEC project MTM2012-31103.

\bibliographystyle{plain}

\end{document}